% **************************************************
%   Isometric immersions into Minkowski spacetime 
%   updated on November 1, 2007
% **************************************************

\documentclass[reqno,11pt]{amsart}
\usepackage{amssymb,amscd,pxfonts,mathrsfs,epsfig,accents} 
\theoremstyle{plain}
        \newtheorem{theorem}{Theorem}[section]
         
        \newtheorem{lemma}[theorem]{Lemma}

        \newtheorem{remark}[theorem]{Remark}  
\numberwithin{equation}{section}

% on M
\newcommand \Rm  {\text{Riem}}
\newcommand \MM    {{\mathcal M}}
\newcommand \imm {\psi}
\newcommand \rig {\ell}
\newcommand \immbis {{\tilde{\imm}}}

\newcommand \gbis   {{\tilde g}}
% on H
\newcommand \HH  {{\mathcal H}}
\newcommand \nH  {\nu} % normal form to H
\newcommand \gH  {\undertilde{g}}
\newcommand \naH {\undertilde{\nabla}} 
\newcommand \KH  {K}
\newcommand \LH  {L}
\newcommand \MH  {M}
\newcommand \gHbis  {\tilde{\gH}}
\newcommand \naHbis {{\tilde{\naH}}} 
\newcommand \KHbis  {{\tilde{\KH}}}
\newcommand \LHbis  {{\tilde{\LH}}}
\newcommand \MHbis  {{\tilde{\MH}}}
% on Minkowski space
\newcommand \Mink  {{\mathbb M}^{n+1}} 
\newcommand \idMink  {I_\gMink}
\newcommand \gMink {\eta}
\newcommand \naMink  {\nabla^\eta}
\newcommand \rigMink {\rig'}
\newcommand \rigMinkbis {\tilde\rigMink}
% on H'
\newcommand \naHMink {\undertilde{\nabla}'} 
\newcommand \KHMink  {K'}
\newcommand \LHMink  {L'}
\newcommand \MHMink  {M'}
\newcommand \Dcal 		{\mathcal{D}} 
\newcommand \Fcal 		{\mathcal{F}}

\newcommand \Acal 		{\mathcal{A}}

\newcommand \RR 		{\mathbb{R}}  
\newcommand \LL 		{\mathbb{L}} 
\newcommand \OO 		{\mathbb{O}} 
\newcommand \Sym 		{\mathbb{S}}  
\newcommand \del  	        \partial
\newcommand \eps 	        \epsilon
\newcommand \lam    	        \lambda
\newcommand \gam 	        \gamma
\newcommand \Gam                \Gamma

\newcommand \la			\langle
\newcommand \ra			\rangle

\def \ov   {\overline}
\def \wt   {\tilde}

\def \d    {\partial}
\def \al   {\alpha}
\def \be    {\beta}

\def \De   {\Delta}
\def \ep   {\varepsilon}
\def \ga   {\gamma}
\def \Ga   {\Gamma}

\def \Om   {\Omega}

\def \ph   {\varphi}

\def \si   {\sigma}
\def \ta   {\tau}

\def \e    {\mathbb{E}}

\def \n    {\mathbb{N}}
\def \r    {\mathbb{R}}

\def \C    {\mathcal{C}} 
\def \D    {\mathcal{D}} 
\def \loc  {\textit{\!loc}}
\def \diag {\textup{diag}}

\newcommand{\mat}[4] {{\left(\aligned  & #1 && #2 \\  & #3 && #4 \endaligned\right)}}

%*********************************************************************

\begin{document}
\title[Isometric immersions into the Minkowski spacetime]
{Isometric immersions into the Minkowski spacetime
\\
for Lorentzian manifolds with limited regularity}
\author
   [P.G. L{\tiny e}Floch, C. Mardare \and S. Mardare]
    {Philippe G. L{\tiny e}Floch, Cristinel Mardare, Sorin Mardare}
\address
   {P. G. LeFloch and C. Mardare\\
   Laboratoire Jacques-Louis Lions\\ 
   \& Centre National de la Recherche Scientifique\\
   Universit\'e de Paris VI, 4 Place Jussieu\\ 
   75252 Paris, France.}  
\email{LeFloch@ann.jussieu.fr, Mardare@ann.jussieu.fr}
\address
   {S. Mardare\\
    Instit\"ut f\"ur Mathematik\\
    Abt. Angewandte Mathematik\\
    Universit\"at Z\"urich\\
    Winterthurerstrasse 190\\
    8057 Z\"urich, Switzerland.}
\email{sorin.mardare@math.unizh.ch.}
\date{\today}
\subjclass[2000]{53C50, 83C99. Secondary: 51B20, 57Q35, 14J70}  

\keywords{Lorentzian manifold, Minkowski spacetime, isometric embedding, general hypersurface}

\begin{abstract} 
Assuming minimal regularity assumptions on the data, we revisit the classical problem of finding 
isometric immersions into the Minkowski spacetime for hypersurfaces of a Lorentzian manifold. 
Our approach encompasses metrics having Sobolev regularity and Riemann curvature defined 
in the distributional sense, only. 
It applies to timelike, spacelike, or null hypersurfaces with arbitrary signature that possibly changes
from point to point. 
\end{abstract}
\date{November 1, 2007. \, Submitted to : \it Discrete and Continuous Dynamical Systems.}
\maketitle

%======================================================================================================================

\section{Introduction} 
\label{intro}

 Given a Lorentzian manifold $(\MM,g)$ of dimension $n+1$ or a hypersurface $\HH\subset \MM$, we study here whether it can be immersed isometrically in the Minkowski space $\Mink:=(\r^{n+1}, \gMink)$. While this subject has been extensively studied within the class of smooth immersions in the context of Riemannian geometry, we are interested in the present paper in the case of Lorentzian manifolds and their hypersurfaces of arbitrary signature and in metrics with limited regularity in a Sobolev space. 
 Our analysis will encompass metrics $g$ of class $W^{1,p}_\loc$ with $p$ greater than the dimension of the underlying manifold, which, in fact, is the optimal regularity. We prove the existence of a global isometric immersion if the underlying manifold is simply connected. We also prove the uniqueness up to isometries of the Minkowski space and the stability of the immersion with respect to the metric.

One of our results is as follows: 

% THM 1.1

\begin{theorem}[Immersion of a manifold in Minkowski spacetime] 
\label{main-one}
Let $(\MM,g)$ be a simply connected, Lorentzian manifold with dimension $n+1$ whose metric $g$ is of class $W^{1,p}_\loc(\MM)$ with $p>n+1$. 
Then, there exists an isometric immersion $\imm:\MM \to \Mink$ of class $W^{2,p}_\loc(\MM)$ if and only if the Riemann curvature (tensor field) $\Rm_g$ defined in the distributional sense vanishes. 
Furthermore, the application $g\mapsto \imm$ is locally Lipschitz continuous in the following sense. For any connected open set $\Acal\Subset \MM$ and any $\ep>0$, there exists a constant $C(\ep,\Acal)$ with the following property: if $g$ and $\gbis$ are metrics on $\MM$ that satisfy 
$$
\min \big(|\det g| , |\det\gbis| \big) \geq \ep, 
\qquad 
\max \big(\|g\|_{W^{1,p}(\Acal)},\|\gbis\|_{W^{1,p}(\Acal)}\big) \leq \frac1\ep,
$$
then there exists isometries $\pi, \wt\pi:\Mink\to\Mink$ of the Minkowski space such that the corresponding isometric immersions $\imm: (\MM,g) \to \Mink$ and $\immbis: (\MM,\gbis) \to \Mink$ satisfy the inequality
$$
\|\wt\pi\circ\immbis-\pi\circ\imm\|_{W^{2,p}(A)}\leq C(\ep,\Acal) \, \|g-\gbis\|_{W^{1,p}(A)}. 
$$ 
\end{theorem}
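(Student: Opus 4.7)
The plan is to recast the existence of $\imm$ as a Pfaff (first-order) system, with the vanishing of the distributional Riemann curvature supplying the compatibility condition. Writing $\imm$ in local coordinates and setting $F^i_\al = \d_\al \imm^i$, the requirement $\imm^*\eta = g$ combined with the flatness of $\eta$ yields
$$
\d_\al F^i_\be = \Gam^\ga_{\al\be}\,F^i_\ga,\qquad \eta_{ij}\,F^i_\al F^j_\be = g_{\al\be},
$$
where $\Gam^\ga_{\al\be}$ are the Christoffel symbols of $g$. The necessity of $\Rm_g = 0$ is then immediate, as an isometric immersion pulls back $\Rm_\eta = 0$ to $\Rm_g$.

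For sufficiency, I would first solve the linear Pfaff system locally on a coordinate chart, exploiting that its integrability condition $\d_\al\d_\be F = \d_\be\d_\al F$ is equivalent to $\Rm_g=0$. Since $p>n+1$, Sobolev embedding gives $g\in C^0$, $\Gam\in L^p_\loc$, and $W^{1,p}$ is a Banach algebra, so $\Rm_g = d\Gam + \Gam\wedge\Gam$ admits a well-defined distributional meaning and nonlinear combinations of the data cause no trouble. Solvability of $\d_\al F = A_\al F$ with $A_\al\in L^p$ on a simply connected chart, together with the regularity $F\in W^{1,p}_\loc$, can be established by smoothing $g$ to $g_\ep$, solving classically, and passing to the limit via uniform estimates, in analogy with the Riemannian framework developed by Ciarlet and C.~Mardare, S.~Mardare. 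Choosing $F(x_0)$ to be $\eta$-orthonormal for $g(x_0)$, the algebraic constraint $\eta_{ij} F^i_\al F^j_\be = g_{\al\be}$ is propagated along the flow by metric compatibility, and the symmetry of $\Gam$ in its lower indices makes $F$ a closed $\r^{n+1}$-valued $1$-form, hence exact on the chart; integration yields $\imm\in W^{2,p}_\loc$ locally.

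To globalize, I would use the simple connectedness of $\MM$: local solutions are unique up to a Poincar\'e transformation of $\Mink$ (as follows from applying the same Pfaff-system analysis to the difference of two local immersions), so overlapping charts patch together consistently and the monodromy around any loop is trivial. The same local rigidity also delivers the global uniqueness statement. For stability, fix $x_0\in\Acal$ and normalize $\pi\circ\imm$ and $\wt\pi\circ\immbis$ so that they agree, along with their first derivatives, at $x_0$. The difference $\De F := \wt F - F$ then satisfies
$$
\d_\al\De F^i_\be = \Gam^\ga_{\al\be}\,\De F^i_\ga + (\wt\Gam^\ga_{\al\be} - \Gam^\ga_{\al\be})\,\wt F^i_\ga,
$$
with $\De F(x_0)$ controlled by $\|g(x_0)-\gbis(x_0)\|$. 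A Gronwall-type argument in $W^{1,p}(\Acal)$, using the uniform control coming from $|\det g|,\,|\det\gbis|\geq\ep$ and $\|g\|_{W^{1,p}},\,\|\gbis\|_{W^{1,p}}\leq 1/\ep$, yields the desired Lipschitz estimate once $\De F$ is integrated to $\wt\pi\circ\immbis-\pi\circ\imm$.

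The main obstacle is making this Pfaff-system machinery function at the low regularity $W^{1,p}$ in Lorentzian signature. The delicate points are: (i) giving a rigorous distributional meaning to $\Rm_g$ that behaves well under smoothing of $g$; (ii) establishing solvability and uniform $W^{1,p}$ estimates for linear first-order systems whose coefficients are merely $L^p$, together with an $L^p$ Poincar\'e lemma on simply connected charts; and (iii) accommodating the indefinite signature, which precludes any automatic $L^\infty$ bound on the ``Minkowski-orthonormal'' frame $F$ in the absence of the nondegeneracy hypothesis $|\det g|\geq\ep$. I expect points (ii) and (iii) to absorb the bulk of the technical work.
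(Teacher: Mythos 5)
Your overall architecture coincides with the paper's: necessity via the commutation of second derivatives of the frame $F_\al=\d_\al f$ tested against suitable test functions; sufficiency via the Pfaff system $\d_\al F=F\Gam_\al$ followed by a Poincar\'e system $df=F$; propagation of the algebraic constraint $F^T\idMink F=(g_{\al\be})$ by the uniqueness part of the Pfaff theorem (both sides solve $\d_\al G=\Gam_\al^TG+G\Gam_\al$ with the same initial datum); globalization by continuation along paths using simple connectedness; and stability via the $W^{1,p}$ stability theory for Pfaff systems. The paper simply invokes S.~Mardare's existence, uniqueness and stability results for Pfaff systems with $L^p$ coefficients rather than redoing the mollification argument you sketch, so items (i)--(ii) of your list are genuinely outsourced there.

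There is, however, one concrete gap, and it sits exactly at your point (iii), which you flag but do not resolve. In the stability step you propose to ``normalize $\pi\circ\imm$ and $\wt\pi\circ\immbis$ so that they agree, along with their first derivatives, at $x_0$'' and simultaneously assert $|\De F(x_0)|\lesssim|g(x_0)-\gbis(x_0)|$; these two statements are incompatible, since $F(x_0)^T\idMink F(x_0)=g(x_0)\neq\gbis(x_0)=\wt F(x_0)^T\idMink\wt F(x_0)$ forbids exact agreement of the first derivatives. What is actually needed is a \emph{Lipschitz selection} $G\mapsto\Fcal(G)$ of a solution of $G=\Fcal(G)^T\idMink\Fcal(G)$ on the set of Lorentz matrices with $|\det G|\geq\ep$, $|G|\leq\ep^{-1}$, satisfying $|\Fcal(\wt G)-\Fcal(G)|\leq C(\ep,n)|\wt G-G|$; one then conjugates each immersion by the Minkowski isometry sending $df(x_0)$ to $\Fcal(g(x_0))$ (resp.\ $d\wt f(x_0)$ to $\Fcal(\gbis(x_0))$) and feeds the resulting initial data into the Pfaff stability estimate. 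This selection is not automatic in Lorentzian signature: the fiber $\{F:\,F^T\idMink F=G\}$ is an orbit of the \emph{noncompact} group $\OO^\eta(n+1)$, so unlike the Riemannian case (where the positive square root or Cholesky factor gives a canonical smooth choice) there is no a priori bound on $F$ in terms of $G$, let alone Lipschitz dependence. The paper devotes a separate lemma to constructing $\Fcal$ (via spectral decomposition, an eigenvector perturbation estimate for the negative eigenvalue, Gram--Schmidt, and a case distinction on whether $|\wt G-G|<2\ep^{n+1}$), and without this ingredient your Gronwall step has no controlled initial datum and the Lipschitz estimate of the theorem does not follow.
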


 The corresponding immersion problem in the case of the Euclidian space has been recently revisited by Ciarlet and his collaborators; see, for instance, \cite{Ciarlet} and the references therein. 
 Observe that, in the above theorem, the curvature of the manifold is defined in the sense of distributions only; for the definition of covariant derivatives and curvature tensors associated with metrics with limited regularity we rely on
LeFloch and C. Mardare \cite{lfma} and the references cited therein. 
The proof of Theorem~\ref{main-one} (in Section \ref{manifold} below) will rely on earlier work by S. Mardare 
on Pfaff-type systems \cite{sor-infty,sor-lpN}. Previous arguments strongly used the assumption that the metric under consideration 
was Riemannian. To establish Theorem~\ref{main-one}, we take account that the metric is Lorentzian; in fact, our argument 
immediately extends also to any pseudo-Riemannian manifold (with arbitrary signature). 

Our second contribution concerns the immersion of {\sl hypersurfaces}
within the Minkowski space. We first consider the case of hypersurfaces with general signature, 
then we specialize our results to spacelike or timelike submanifolds. 
Consider a hypersurface $\HH\subset \MM$ in a Lorentzian manifold with dimension $n+1$ and a transverse field (henceforth called rigging) $\rig$ along $\HH$, that is, a vector field $\rig\in T\MM$ that is transversal to $\HH$. Then, 
the Levi-Civita connection of $\MM$ can be decomposed into ``tangent'' and ``transversal'' components as follow:
$$
\aligned 
\nabla_X Y & =\naH_X Y - \KH(X,Y)\rig,  \quad X, Y\in T\HH,\\
\nabla_X \rig & =\LH(X) - \MH(X)\rig, \quad X\in T\HH, 
\endaligned
$$
where $\naH,\KH,\LH,\MH$ are operators defined on $T\HH$. We say that an immersion $\imm:\HH\to\Mink$ and a rigging $\rigMink:\HH\to T\Mink$ {\sl preserve the operators}
 $\naH,\KH,\LH,\MH$ if the Levi-Civita connection $\naMink$ of the Minkowski spacetime satisfies 
$$
\aligned
\naMink_{\imm_*X} \imm_*Y   & =\imm_*(\naH_{X} Y) - \KH(X,Y)\rigMink, \qquad X,Y\in T\HH,
\\
\naMink_{\imm_*X} \rigMink & =\imm_*(\LH(X))  - \MH(X)\rigMink, \qquad X\in T\HH.
\endaligned
$$

The main result established in Section \ref{hypersurface1} below is as follows.

% THM 1.2

\begin{theorem}[Immersion of a hypersurface with rigging] 
\label{main-two}
With the notation above, suppose that $\HH$ is simply connected and the operators $\naH,\KH,\LH,\MH$ are of class $L^p_\loc(\HH)$ with $p>n$.
Then, there exists an immersion $\imm:\HH \to \Mink$ and a rigging $\rigMink:\HH\to T\Mink$, respectively of class $W^{2,p}_\loc(\HH)$ and $W^{1,p}_\loc(\HH)$, preserving these operators if and only if the (generalized) 
Gauss and Codazzi equations (see \eqref{gc} below) are satisfied. 
Moreover, the application $(\naH,\KH,\LH,\MH) \mapsto (\imm,\rigMink)$
is locally Lipschitz continuous in the following sense. 
For any connected open set $\Acal \Subset \HH$ and any $\ep>0$, there exists a constant $C(\ep,\Acal)$ with the following property: 
if two sets of operators $\naH,\KH,\LH,\MH$ and $\naHbis,\KHbis,\LHbis,\MHbis$ satisfy 
$$
\max\big(\|(\naH,\KH,\LH,\MH)\|_{L^p(\Acal)},\|(\naHbis,\KHbis,\LHbis, \MHbis)\|_{L^p(\Acal)}\big)  \leq \frac1\ep, 
$$
then there exists an affine bijection $\sigma:\Mink\to\Mink$ of the Minkowski spacetime such that the corresponding immersions $\imm,\immbis:\HH\to\Mink$ and riggings $\rigMink,\rigMinkbis$ satisfy the inequality
$$
\aligned 
& \|\immbis-\sigma\circ\imm\|_{W^{2,p}(\Acal)}+\|\rigMinkbis -\sigma_*\rigMink\|_{W^{1,p}(\Acal)} 
\\
& \leq C(\ep,\Acal) \, \Big( 
\|(\naHbis,\KHbis,\LHbis, \MHbis) - (\naH,\KH,\LH,\MH)\|_{L^p(\Acal)}   
\Big).
\endaligned 
$$ 
\end{theorem}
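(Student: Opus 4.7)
The plan is to reduce everything to a first-order Pfaff-type system for a frame in $\mathbb{R}^{n+1}$, to which S.~Mardare's existence and stability results with $L^p$ coefficients can be applied. Working locally in a coordinate chart $(U, x^i)$ on $\HH$, I would seek $n+1$ vector fields $f_1, \ldots, f_n, f_{n+1}: U \to \mathbb{R}^{n+1}$: the first $n$ are intended to realize the push-forward $\imm_*(\partial/\partial x^i)$, and the last one the rigging $\rigMink$. Writing the preservation conditions in these coordinates, using the Christoffel symbols $\Gamma_{ij}^k$ of $\naH$ and the components $\KH_{ij}$, $\LH_i^j$, $\MH_i$ of the remaining operators, gives the system
$$
\partial_i f_j = \Gamma_{ij}^k f_k - \KH_{ij}\, f_{n+1}, \qquad
\partial_i f_{n+1} = \LH_i^k\, f_k - \MH_i\, f_{n+1},
$$
which can be repackaged as $\partial_i F = F A_i$, where $F = (f_1, \ldots, f_{n+1})$ is an $(n+1)\times(n+1)$ matrix-valued unknown and $A_i \in L^p_\loc$ are coefficients built algebraically from the four given operators.

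The next step is to verify that the integrability condition for this Pfaff system, namely $\partial_i A_j - \partial_j A_i + [A_i, A_j] = 0$ in the distributional sense, is equivalent to the Gauss and Codazzi equations \eqref{gc}. This is a direct tensorial computation, and is in fact the natural origin of those equations. Assuming the Gauss--Codazzi equations hold, I would invoke S.~Mardare's Pfaff theorem \cite{sor-lpN} on each chart to produce a matrix-valued solution $F \in W^{1,p}_\loc$, unique up to left multiplication by a constant invertible matrix and depending locally Lipschitz continuously on the coefficients $A_i$ in $L^p$. Globalization on the simply connected manifold $\HH$ is obtained by patching local solutions, the monodromy being trivial thanks to the vanishing of the curvature of the Pfaff system and to the topology of $\HH$.

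Having the frame $F$, I would recover $\imm$ by solving $\partial_i \imm = f_i$. The required symmetry $\partial_j f_i = \partial_i f_j$ follows from the symmetry $\Gamma_{ij}^k = \Gamma_{ji}^k$ (torsion-free $\naH$) and from the symmetry of the bilinear form $\KH_{ij}$, both built into the definition of the operators. A Poincar\'e-type argument, using once more the simple connectedness of $\HH$, then yields $\imm \in W^{2,p}_\loc$, unique up to an additive constant in $\mathbb{R}^{n+1}$. Combining this translation freedom with the constant-matrix ambiguity in $F$, two solutions $(\imm, \rigMink)$ and $(\immbis, \rigMinkbis)$ differ by a single affine bijection $\sigma$ of $\Mink$, as stated.

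For the stability estimate, I would feed the Lipschitz dependence from S.~Mardare's Pfaff theorem through the linear integration step $\partial_i\imm = f_i$ and obtain the desired inequality. The main obstacle is precisely the low regularity of the coefficients: the classical smooth Frobenius theorem does not apply, and one must use the distributional formulation of both the integrability condition and the Gauss--Codazzi equations, which is exactly where the $W^{1,p}$/$L^p$ Pfaff machinery of \cite{sor-infty,sor-lpN} becomes indispensable. A secondary subtlety is the normalization: for the stability inequality to hold uniformly in $\ep$, the affine map $\sigma$ must be chosen so that $\immbis$ and $\sigma\circ\imm$ agree to first order at a reference point of $\Acal$, which fixes the integration constant and the overall constant matrix by a base-point condition; this is a standard but essential bookkeeping step.
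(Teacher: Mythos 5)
Your proposal follows essentially the same route as the paper: the same first-order Pfaff system $\partial_i F = F A_i$ with block coefficients assembled from $(\naH,\KH,\LH,\MH)$, whose distributional integrability condition is exactly the generalized Gauss--Codazzi system, followed by a Poincar\'e system for $\imm$, globalization by patching over the simply connected $\HH$, and the $L^p$ stability estimate for Pfaff systems combined with a base-point normalization of the affine map $\sigma$. The only detail worth adding is that one must take the initial value $F(x_\star)$ invertible and invoke the propagation of invertibility for solutions of such systems (\cite[Lemma 6.1]{sor-lpN}) to conclude that $f$ is indeed an immersion and that the last column of $F$ is genuinely transversal.
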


Observe that no assumption is made on the signature of the hypersurface. 
In particular, this theorem applies to hypersurfaces that are nowhere null. For such hypersurfaces we 
obtain in Section \ref{hypersurface2} a simpler derivation by choosing 
 the rigging to be the unit normal vector field to $\HH$. Recall that 
 the pull-back on $\HH$ of the forms $g$ and $\nabla n$, denoted by
 $\gH$ and $\KH$, are the first and second fundamental forms of $\HH\subset \MM$, respectively.
 In Section \ref{hypersurface2} below we will prove:

% THM 1.3

\begin{theorem}[Immersion of spacelike or timelike hypersurfaces] 
\label{main-three}
Suppose that $\HH$ is simply connected and nowhere null and that $(\gH,\KH)$ is
 of class $W^{1,p}_\loc(\HH)\times L^p_\loc(\HH)$ with $p>n$. 
Then, there exists an immersion $\imm:\HH \to \Mink$ of class $W^{2,p}_\loc(\HH)$ preserving the fundamental forms  $\gH$ and $\KH$,
if and only if the Gauss and Codazzi equations (see \eqref{gc+} below) 
are satisfied. Moreover, the application $(\gH,\KH) \mapsto \imm$
 is locally Lipschitz continuous 
 in the following sense. For any connected open set $\Acal\Subset \HH$ and any $\ep>0$, there exists a constant $C(\ep,\Acal)$ with the following property:  given any $(\gH,\KH)$ and $(\gHbis,\KHbis)$ satisfying  
$$
\aligned 
& \min\big( |\det \gH|, |\det \gHbis| \big) \geq \ep, 
\\
& \max\big(\|\gH\|_{W^{1,p}(\Acal)},\|\KH\|_{L^p(\Acal)},\|\gHbis\|_{W^{1,p}(\Acal)},\|\KHbis\|_{L^p(\Acal)}\big)  \leq \frac1\ep,
\endaligned 
$$
there exists proper isometries $\pi,\wt\pi$ of the Minkowski space such that
$$
\|\wt\pi\circ\immbis-\pi\circ \imm\|_{W^{2,p}(\Acal)} 
\leq C(\ep,\Acal) \,\Big( \|\gHbis-\gH\|_{W^{1,p}(\Acal)} + \|\KHbis-\KH\|_{L^p(\Acal)} \Big). 
$$
\end{theorem}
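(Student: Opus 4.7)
The plan is to reduce Theorem~\ref{main-three} to Theorem~\ref{main-two} by choosing the rigging $\rig$ to be a unit normal to the hypersurface. Since $\HH$ is nowhere null, the signature of $\gH$ is locally constant and determines a sign $\nu\in\{-1,+1\}$ such that any $\gMink$-unit normal $\rigMink$ to $\imm(\HH)$ must satisfy $\gMink(\rigMink,\rigMink)=\nu$ (namely $\nu=-1$ when $\gH$ is Riemannian and $\nu=+1$ when $\gH$ is Lorentzian). From $(\gH,\KH)$ I would construct the operators required by Theorem~\ref{main-two} as follows: take $\naH$ to be the Levi-Civita connection of $\gH$, whose Christoffel symbols are polynomial in $\gH$, $\gH^{-1}$, and $\partial\gH$ and hence lie in $L^p_\loc$; keep $\KH$ unchanged; set $\MH:=0$; and let $\LH:=\nu\,\gH^{-1}\KH$, equivalently $\gH(\LH(X),Y)=\nu\,\KH(X,Y)$, which is in $L^p_\loc$. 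A direct computation then shows that the Gauss--Codazzi equations~\eqref{gc+} for $(\gH,\KH)$ are equivalent to the generalized Gauss--Codazzi equations~\eqref{gc} satisfied by $(\naH,\KH,\LH,\MH)$.

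Applying Theorem~\ref{main-two} yields an immersion $\imm:\HH\to\Mink$ and a rigging $\rigMink$ (of the stated regularity) preserving the above operators. The next step is to verify that $\imm$ is isometric and that $\rigMink$ is the unit normal to $\imm(\HH)$. For this, I would study the 1-form $\ph(X):=\gMink(\rigMink,\imm_*X)$, the scalar $\chi:=\gMink(\rigMink,\rigMink)-\nu$, and the symmetric tensor $h(X,Y):=\gMink(\imm_*X,\imm_*Y)-\gH(X,Y)$. Direct differentiation using the preservation relations (and $\MH=0$) shows that $(h,\ph,\chi)$ satisfies a homogeneous first-order linear system along any curve in $\HH$; for example, $Z\chi=2\ph(\LH(Z))$, and $Z\ph(X)=h(\LH(Z),X)+\ph(\naH_Z X)-\KH(Z,X)\,\chi$. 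Post-composing $\imm$ with an affine bijection of $\Mink$ so as to normalize the initial frame $(\imm_*(e_1),\ldots,\imm_*(e_n),\rigMink)$ at a base point $x_0$ into a $\gMink$-orthonormal frame of the correct signature makes $(h,\ph,\chi)$ vanish at $x_0$; by uniqueness for the linear system, they vanish everywhere. Consequently $\imm^*\gMink=\gH$, $\rigMink$ is a unit normal to $\imm(\HH)$, and the second fundamental form induced by $\imm$ equals $\KH$.

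For the stability estimate, normalize both $(\imm,\rigMink)$ and $(\immbis,\rigMinkbis)$ at a common base point $x_0$ so that their initial frames coincide with the same $\gMink$-orthonormal frame; the affine bijection $\sigma$ produced by Theorem~\ref{main-two} then approximately sends this orthonormal frame to itself (using the $W^{2,p}\hookrightarrow C^1$ embedding for $p>n$ to evaluate at $x_0$), and a brief perturbation argument replaces $\sigma$ by a genuine Minkowski isometry with the same order of error. Combining this isometry with the preliminary normalizations of $\imm$ and $\immbis$ yields the required pair $(\pi,\wt\pi)$ of proper isometries. The $L^p$ bounds on $(\naH,\KH,\LH,\MH)$ in terms of $(\gH,\KH)$ follow from the explicit formulae, using the Sobolev embedding $W^{1,p}\hookrightarrow C^0$ for $p>n$ and the lower bound $|\det\gH|\geq\ep$, which gives a uniform bound on $\gH^{-1}$; combined with the Lipschitz estimate of Theorem~\ref{main-two}, this delivers the desired inequality. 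I expect the main obstacle to lie in the second step---the rigorous verification, in the limited-regularity setting, that the linear system for $(h,\ph,\chi)$ has unique zero solution---together with the perturbation argument that promotes the affine bijection to an isometry of $\Mink$.
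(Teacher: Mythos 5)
Your strategy---rig the hypersurface with its unit normal, feed the resulting operators $(\naH,\KH,\LH,\MH)$ into Theorem~\ref{main-two}, and then verify a posteriori that the immersion is isometric and the rigging is the unit normal---is sound and is, at the level of strategy, exactly what the paper does in Section~5; the differences lie in how two delicate steps are executed. For the verification step, the paper does not invoke Theorem~\ref{main-two} as a black box: it re-solves the Pfaff system \eqref{1++} with an initial datum $F_\star$ supplied by Lemma~\ref{matrixdecomp} and chosen so that $F_\star^T\idMink F_\star=\diag\big((\gH_{ij}(x_\star)),\lambda\big)$, and then observes that $F^T\idMink F$ and $\diag\big((\gH_{ij}),\lambda\big)$ solve the same linear Cauchy problem, hence coincide. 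This is the same uniqueness-for-linear-Pfaff-systems trick as your homogeneous system for $(h,\ph,\chi)$, merely packaged at the level of the Gram matrix, so either version is fine. Your sign $\nu$ in $\LH=\nu\,\gH^{-1}\KH$ is in fact exactly what makes your system for $(h,\ph,\chi)$ homogeneous (it is the classical causal-character factor in the Gauss equation); just be aware that with this choice the Gauss equation you extract from \eqref{gc} carries a factor $\nu$ on the quadratic terms, whereas \eqref{gc+} and Lemma~\ref{identities}(ii) as printed do not, so you must pin down the sign conventions for $\KH$ and the normal before claiming equivalence with the stated \eqref{gc+}.

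The one place where your route genuinely defers work is the stability estimate. Theorem~\ref{main-two} only returns an affine bijection $\sigma$, and promoting its linear part to an element of $\OO_+^\eta(n+1)$ with an error of the same order is not free: $\OO^\eta(n+1)$ is unbounded, and the quantitative statement ``a matrix $A$ with $A^T\idMink A$ close to $\idMink$ and $|A|$ controlled is within $C\,\de$ of $\OO_+^\eta(n+1)$'' needs a proof, with a constant ultimately controlled through the bounds $|\det\gH|\ge\ep$ and $\|\gH\|_{W^{1,p}}\le 1/\ep$. The paper avoids this perturbation entirely: Lemma~\ref{matrixdecomp} produces initial data $E_\star,\wt E_\star$ with $|\wt E_\star-E_\star|\le C\,|\gHbis(x_\star)-\gH(x_\star)|$ and positive determinant, and the comparison matrices $Q=E_\star F(x_\star)^{-1}$ and $\wt Q=\wt E_\star\,\wt F(x_\star)^{-1}$ are then \emph{exactly} proper Minkowski-orthogonal, so the Pfaff stability estimate applies directly to $QF$ and $\wt Q\wt F$. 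I recommend you either route your normalization through Lemma~\ref{matrixdecomp} in the same way, or state and prove the projection lemma your perturbation argument relies on; as written, that is the only substantive gap in the proposal.
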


The paper is organized as follows. In Section~\ref{prel},
we introduce our notation and provide some preliminary results. 
Sections~\ref{manifold}, \ref{hypersurface1}, and \ref{hypersurface2} are devoted to the proof of 
Theorems~\ref{main-one}, \ref{main-two}, and \ref{main-three}, respectively, and contain
slightly more general conclusions.

%======================================================================================================

\section{Notation and preliminaries}
\label{prel}

For background on the analysis techniques (Sobolev spaces on manifolds, etc), we refer to 
\cite{AMR,ADAMS}. 

Throughout this paper, all Greek indices and exponents vary in the set $\{0,1,\ldots,n\}$, 
while Latin indices, save for $n,p,m,\ell$ and $q$, vary in the set $\{1,\ldots,n\}$. 
Einstein summation convention for repeated indices is used. 
A pseudo-Riemannian manifold is a smooth manifold $\MM$ endowed with a metric, that is, 
a symmetric non-degenerate $(0,2)$-tensor field $g$ 
of constant index. A pseudo-Riemannian manifold is called Riemannian if 
its index is zero and Lorentzian if its index is one.  
The Minkowski spacetime $\Mink$ is the vector space $\RR^{n+1}$ endowed with the Minkowski metric 
$$
\gMink(X,Y)=-X^0Y^0 + \sum_{i=1}^n X^i Y^i, \qquad  X=(X^\al), \, Y=(Y^\al) \in \r^{n+1}. 
$$  

An isometric immersion of a pseudo-Riemannian manifold $(\MM,g)$ into another pseudo-Riemannian manifold $(\MM',g')$ is an immersion $\imm:\MM\to \MM'$ that preserves the metric tensor, in the sense that $\imm^* g'=g$. 
Here and in the sequel, notation such as $\imm^* g'$ and $\imm_*X$ denotes respectively the pull-back of $g'$ and the push-forward of $X$ by $\imm$; in particular, $\imm^* g'$ is the $(0,2)$-tensor field on $\MM$ defined by 
$$
(\imm^* g')(X,Y) := g'(\imm_*X,\imm_*Y), \qquad X,Y\in T\MM. 
$$
Related to the Minkowski space, we define the $(n+1)\times (n+1)$ matrix 
\begin{equation}
\label{defA}
\idMink=\diag(-1,1,...,1)
\end{equation} 
and the sets 
$$
\aligned 
 \OO^\eta(n+1)  & :=\{Q\in \r^{(n+1)\times(n+1)}; \ Q^T\idMink Q = \idMink\},\\
 \OO_+^\eta(n+1) & :=\{R\in   \OO^\eta(n+1); \ \det R=1\}. 
\endaligned
$$
The matrices in $\OO^\eta(n+1)$ and $\OO_+^\eta(n+1)$ are respectively called Minkowski-orthogonal and proper Minkowski-orthogonal matrices. By contrast with the set of (usual) orthogonal matrices, i.e., those matrices $P$ that satisfy $P^T P=I$, where $I=\diag(1,...,1)$ denotes the identity matrix of order $n+1$, the set  $\OO^\eta(n+1)$ is not bounded.  An isometry of the Minkowski spacetime $\Mink$ is a mapping
\begin{equation}
\label{isometryMink}
\pi:y\in \r^{n+1}\mapsto v+Qy\in \r^{n+1},
\end{equation}
where $v\in \r^{n+1}$ and $Q\in \OO^\eta(n+1)$. Such an isometry $\pi$ is called proper if $Q\in \OO_+^\eta(n+1)$. 

Let $\Sym(n+1)$ denote the space of all symmetric real matrices. For any matrix $G\in \Sym(n+1)$, let $\lambda_0\leq\lambda_1\leq ...\leq \lambda_n$ denote its (real) eigenvalues. Let 
$$
\LL(n+1):=\{G\in \Sym(n+1); \ \lambda_0<0<\lambda_1\leq ...\leq \lambda_n\}
$$
denote the set of all Lorentz matrices of order $n+1$ and define for all $0<\ep\leq 1$ the subsets
$$
\LL_\ep(n+1) :=\{G\in \LL(n+1); \ |\det G| >\ep \text{ and } |G| <\ep^{-1}\}.
$$
Note that $\LL(n+1)=\lim_{\ep\to 0} \LL_\ep(n+1)$. It is easy to show that any matrix $G\in\LL(n+1)$ has a decomposition $G=F^T \idMink F$ for some invertible matrix $F$ of order $n+1$ (see the beginning of the proof of Lemma \ref{matrixdecomp} below). But such a decomposition is not unique, for the matrix $QF$ with $Q\in  \OO^\eta(n+1)$ also satisfies $G=(QF)^T\idMink (QF)$ (the converse is also true, i.e., if $G=\wt F^T \idMink \wt F$ then $\wt F=QF$ for some $Q\in  \OO^\eta(n+1)$). Since the set $\OO(n+1)$ is not bounded, this shows in particular that the norm of the matrix $F$ in the above decomposition is not controlled by the norm of $G$. This is one of the reasons we need to prove the following lemma about the decomposition of Lorentz matrices: 

\begin{lemma}
\label{matrixdecomp}
Let $G\in \LL_\ep(n+1)$. There exists a mapping $\Fcal:\LL_\ep(n+1)\to \RR^{(n+1)\times(n+1)}$ such that 
$$
\aligned 
& \wt G=\Fcal(\wt G)^T \idMink \Fcal(\wt G), 
\qquad 
|\Fcal(\wt G)|=|\wt G|^{1/2}, 
\\
& 
|\Fcal(\wt G) -\Fcal(G)|\leq C(\ep,n)|\wt G -G|. 
\endaligned 
$$
The mapping $\Fcal$ depends on $G$ but the constant $C(\ep,n)$ does not. 
\end{lemma}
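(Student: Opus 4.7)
The plan is to construct $\Fcal$ via a Minkowski polar decomposition of the form $\Fcal(\wt G) = R(\wt G)\,T(\wt G)$, where $T(\wt G) := (\wt G^2)^{1/4}$ is the symmetric positive-definite ``modulus'' of $\wt G$ and $R(\wt G)$ is an orthogonal matrix chosen so that $R(\wt G)^T \idMink R(\wt G) = \sigma(\wt G) := \wt G\,(\wt G^2)^{-1/2}$. The conditions $|\det \wt G| \geq \eps$ and $|\wt G| \leq \eps^{-1}$ defining $\LL_\eps(n+1)$ force every eigenvalue of $\wt G$ to lie in $[-\eps^{-1},-\eps^{n+1}]\cup[\eps^{n+1},\eps^{-1}]$, so $\wt G^2$ has spectrum uniformly bounded away from $0$ and $\infty$ on $\LL_\eps$. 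By the symmetric functional calculus, $T$ and $\sigma$ are then analytic functions of $\wt G \in \LL_\eps$, with Lipschitz constants depending only on $\eps$ and $n$. Moreover $\sigma$ is a symmetric involution of signature $(1,n)$ commuting with $T$, and one has the identity $\wt G = T(\wt G)\,\sigma(\wt G)\,T(\wt G)$.

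Once such a Lipschitz family of orthogonal matrices $R(\wt G)$ has been produced, the ansatz $\Fcal=RT$ yields both $\Fcal^T \idMink \Fcal = T\sigma T = \wt G$ and $\Fcal^T \Fcal = T^2 = (\wt G^2)^{1/2}$, the spectral norm of which equals $\max_\alpha |\lambda_\alpha(\wt G)| = |\wt G|$; hence $|\Fcal(\wt G)| = |\wt G|^{1/2}$ (in the spectral matrix norm). The existence of such an $R$ for each individual $\wt G$ is immediate from the spectral theorem applied to $\sigma(\wt G)$, so the whole content of the lemma reduces to constructing $R(\wt G)$ as a uniformly Lipschitz function of $\wt G$ near the reference point $G$.

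For this I exploit the fact that the negative eigenvalue of every $\wt G \in \LL_\eps$ is simple and separated from the positive spectrum by a gap of at least $2\eps^{n+1}$. The rank-one spectral projection $P_-(\wt G)$ onto the negative eigenspace $V_-(\wt G)$ therefore admits a Dunford integral representation $\frac{1}{2\pi i}\oint_\gamma (z-\wt G)^{-1}\,dz$ over a fixed contour $\gamma$ encircling $[-\eps^{-1},-\eps^{n+1}]$, and is accordingly uniformly Lipschitz on $\LL_\eps$. Fixing $G$, I choose a unit vector $v_0 \in V_-(G)$ and an orthonormal basis $u_1^0,\ldots,u_n^0$ of $V_-(G)^\perp$. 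For $\wt G$ in a uniform (in $G$) neighborhood of $G$, I set $v_-(\wt G) := P_-(\wt G) v_0 / \|P_-(\wt G) v_0\|$, which is well-defined since $P_-(G) v_0 = v_0 \neq 0$, and I obtain an orthonormal basis $u_1(\wt G),\ldots,u_n(\wt G)$ of $V_-(\wt G)^\perp$ by Gram--Schmidt applied to the vectors $(I-P_-(\wt G))\,u_i^0$. Then $R(\wt G)$ is the orthogonal matrix whose rows are $v_-(\wt G)^T,u_1(\wt G)^T,\ldots,u_n(\wt G)^T$; one checks directly that $R(\wt G)^T \idMink R(\wt G) = \sigma(\wt G)$, and the construction is uniformly Lipschitz on this neighborhood. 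Outside the neighborhood I simply define $\Fcal(\wt G)$ by any valid factorization (e.g.\ by recentering the same construction at $\wt G$), so the required Lipschitz estimate follows from $|\Fcal(\wt G)-\Fcal(G)| \leq 2\eps^{-1/2}$ combined with the uniform lower bound on $|\wt G - G|$. The main technical obstacle is precisely the uniformity in $G$ of the neighborhood radius and of the various Lipschitz constants, which ultimately reduces to the uniform spectral-gap bound $2\eps^{n+1}$ built into the definition of $\LL_\eps$.
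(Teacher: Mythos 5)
Your proposal is correct and, despite the polar-decomposition and functional-calculus packaging, it is essentially the paper's construction: your $R(\wt G)\,T(\wt G)$ with $T(\wt G)=(\wt G^2)^{1/4}$ coincides with the paper's $\wt A\,\wt P^T$ (diagonal square roots of $|\wt\lambda_\al|$ times the transposed eigenvector matrix), and the crux of the argument --- a Lipschitz selection of the factorization exploiting the spectral gap of size $2\ep^{n+1}$, organized as a two-case split on $|\wt G-G|$ with a Gram--Schmidt construction anchored at the eigenbasis of $G$ when $\wt G$ is close to $G$ and the trivial bound $2\ep^{-1/2}$ otherwise --- is the same. The only substantive difference is that you derive the uniform Lipschitz bounds for the modulus $(\wt G^2)^{1/4}$ and the Riesz projection $P_-$ from Dunford integrals over fixed contours, whereas the paper gives hands-on eigenvalue/eigenvector perturbation estimates and a blockwise square root $\wt H^{1/2}$; both routes are valid and yield constants depending only on $\ep$ and $n$.
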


\begin{proof} 
The Euclidean norm of a vector $v\in \r^{n+1}$ is denoted $|v|$, the Euclidean inner product of two vectors $v,w\in \r^{n+1}$ is denoted $v\cdot w$, and the operator norm of a matrice $A\in \r^{(n+1)\times(n+1)}$ is denoted and defined by $|A|:=\sup_{|v|=1}|Av|$. 

Let $p_0,p_1,...,p_n$ be an orthonormal basis in $\r^{n+1}$ formed by eigenvectors of $G$, i.e., $G p_\al =\lambda_\alpha p_\alpha$. Let $P=[p_0\ p_1\ ...\ p_n]$ be the matrix whose $\al$-column is the vector $p_\alpha$. Then $P^T P=I$ and $P^T G P=\diag(\lambda_0,\lambda_1,...,\lambda_n)$. Denoting $A:=\diag((-\lambda_0)^{1/2},\lambda_1^{1/2},...,\lambda_n^{1/2})$ and $F:=AP^T$, we thus have $F=|A|=|G|^{1/2}$
$$
G=P (A \idMink A) P^T=(A P^T)^T \idMink (A P^T)=F^T \idMink F.
$$

Let $\Fcal(G)=F$. To define the matrix $\wt F:=\Fcal(\wt G)$ for any other matrix $\wt G\in \LL_\ep(n+1)$ we distinguish two cases, according to whether $|\wt G-G|< 2 \ep^{n+1}$ or not. This condition is related to the Gram-Schmidt orthonormalisation method. 

Let $\wt\lambda_0<0<\wt\lambda_1\leq ...\leq \lambda_n$ denote the eigenvalues of $\wt G$. Then for every $\al\in\{0,1,...,n\}$, 
$$
|\wt\lambda_\al|\geq \frac{|\det \wt G|}{|\wt G|^n}\geq \ep^{n+1}. 
$$
If $\wt G$ satisfies $|\wt G-G|\geq 2\ep^{n+1}$, we define the matrix $\wt F:=\Fcal(\wt G)$ as follows. Let $\wt p_0, \wt p_1,..., \wt p_n$ be a set of orthonormal eigenvectors of $\wt G$ such that $\wt G \wt p_\al=\wt\lambda_\al \wt p_\al$ and let $\wt P=[\wt p_0\ \wt p_1\ ...\ \wt p_n]$ denote the matrix whose $\al$-column is the vector $\wt p_\alpha$. Then $\wt P^T \wt P=I$ and $\wt P^T \wt G \wt P=\diag(\wt \lambda_0,\wt \lambda_1,...,\wt \lambda_n)$. Denoting $\wt A:=\diag((-\lambda_0)^{1/2},\lambda_1^{1/2},...,\lambda_n^{1/2})$ and $\wt F:=\wt A \wt P^T$, we thus have $|\wt F|=|\wt A|=|\wt G|^{1/2}$ and 
$$
\wt G=\wt P (\wt A \idMink \wt A) \wt P^T=\wt F^T \idMink \wt F
$$
and 
$$
|\wt F -F|\leq |\wt G|^{1/2} + |G|^{1/2} \leq \frac2{\ep^{1/2}}\leq \frac1{\ep^{n+3/2}}|\wt G-G|.
$$

If $\wt G$ satisfies $|\wt G-G|<2\ep^{n+1}$, we define the matrix $\wt F:=\Fcal(\wt G)$ as follows.  Let $\wt p_0\in\r^{n+1}$ be the (unique) unit vector satisfying $\wt G \wt p_0=\wt \lambda_0 \wt p_0$ and $\wt p_0\cdot p_0\geq 0$. Let us first prove the inequalities
\begin{equation}
\label{est1}
|\wt\lambda_0-\lambda_0|\leq |\wt G-G| \ \text{ and } \ 
|\wt p_0 -p_0|\leq \frac1{\ep^{n+1}\sqrt{2}}\, |\wt G-G|.
\end{equation}
Since $\lambda_0=\inf_{|v|=1}\{(Gv)\cdot v\}$, we have
$$
\lambda_0-\wt\lambda_0\leq (G \wt p_0)\cdot \wt p_0 - (\wt G \wt p_0)\cdot \wt p_0 \leq |G-\wt G|. 
$$
By symmetry, this shows that $|\wt\lambda_0-\lambda_0|\leq |\wt G-G|$. 

To prove the second inequality of \eqref{est1}, we decompose the vector $\wt p_0$ as 
$$
\wt p_0=a p_0+b w \ \text{ where } w\in \r^{n+1}, |w|=1, w\cdot p_0=0, w\cdot \wt p_0\geq 0.
$$ 
Clearly, $0\leq a,b\leq 1$ and $a^2+b^2=1$. Then 
$$
\wt\lambda_0 b
=(\wt G \wt p_0)\cdot w
=(G \wt p_0)\cdot w+((\wt G-G) \wt p_0)\cdot w
=b (Gw)\cdot w +((\wt G-G) \wt p_0)\cdot w. 
$$
Since the vector $w$ belongs to the orthogonal complement of $p_0$ in $\r^{n+1}$, it follows that $(Gw)\cdot w\geq \lambda_1$. The last equality above then implies that $(\lambda_1-\wt\lambda_0) b\leq |\wt G-G|$. Since $\lambda_1>\ep^{n+1}$ and $\wt\lambda_0<-\ep^{n+1}$, we have 
$$
b\leq \frac1{2\ep^{n+1}} |\wt G-G|.
$$
Consequently,
$$
|\wt p_0-p_0|^2 = (a-1)^2+b^2\leq (1-a^2)+b^2=2b^2< \frac1{2\ep^{2(n+1)}} |\wt G-G|^2.
$$

We next define an orthogonal basis in $\r^{n+1}$ by applying the Gram-Schmidt orthonormalisation method to vectors $\wt p_0, p_1,...,p_n$. These vectors are linearly independent thanks to the assumption that $|\wt G-G|<2\ep^{n+1}$ and to the relations $2\wt p_0\cdot p_0=|\wt p_0|^2+|p_0|^2-|\wt p_0-p_0|^2\geq 2-\frac1{2\ep^{2(n+1)}} |\wt G-G|^2>0$. We thus define an orthonormal basis $\{\wt v_0, \wt v_1,...,\wt v_n\}$ by letting 
$$
\aligned 
& \wt v_0=\wt p_0 \ \text{ and } \ \wt v_k=\frac1{|v_k|} v_k, \ \text{ where } 
& v_k=p_k-\sum_{i=0}^{k-1}(p_k\cdot \wt v_i)\wt v_i, \ \text{ for all } k=1,...,n.
\endaligned
$$
Clearly, $p_k\cdot v_k\geq 0$ for all $k$. Since $|p_k|=1$, this implies that $|\wt v_k-p_k|\leq \sqrt{2}|v_k-p_k|$. Consequently, for all $k=1,2,...n$, 
$$
\aligned 
|\wt v_k-p_k|^2\leq 2|v_k-p_k|^2=2\sum_{i=0}^{k-1}(p_k\cdot \wt v_i)^2
& = 2\sum_{i=0}^{k-1}(p_k\cdot (\wt v_i-p_i))^2
\\
& \leq 2\sum_{i=0}^{k-1} |\wt v_i-p_i|^2
\endaligned 
$$
and so there exists a constant $C(n)=2^{n-1/2}$ such that 
$$
|\wt v_k-p_k|\leq C(n)|\wt v_0-p_0|\leq \frac{C(n)}{\ep^{n+1}\sqrt{2}} \, |\wt G-G| \ \text{ for all } k=1,...,n. 
$$

Since the subspace of $\r^{n+1}$ spanned by the vectors $\wt v_1,...,\wt v_n$ is stable under the linear mapping defined by the matrix $\wt G$ (because this subspace is the orthogonal complement of the subspace spanned by $\{\wt v_0\}$), $\wt G \wt v_0=\wt\lambda_0 \wt v_0$ and $\wt G \wt v_k=\sum_{i=1}^n \wt H_{ik}\wt v_i$ for some coefficients $\wt H_{ik}\in \r$, $i,k\in \{1,2,...,n\}$. These relations show that the matrices $\wt V=\big[\wt v_0 \ \wt v_1 \ ... \ \wt v_n\big]$ and $\wt H=\big(\wt H_{ik}\big)$ satisfy 
$$
\wt V^T V^T=I 
\text{ \ and \ } 
\wt V^T \wt G \wt V= \mat{\wt\lambda_0}00{\wt H}. 
$$
Note that $\wt H$ is a symmetric and positive-definite matrix whose eigenvalues are precisely $\wt\lambda_1,...,\wt\lambda_n$. Hence $|\wt H|=|\lambda_n|$. Note also that the definition of matrices $\wt V,\wt H$, $P$ and $D=\diag(\lambda_1,...,\lambda_n)$ imply that 
$$
\aligned
& 
|\wt V-P|\leq \Big(\sum_{\al=0}^n |\wt v_\al-p_\al|^2\Big)^{1/2}\leq \frac{C(n)}{\ep^{n+1}} |\wt G-G|,\\
&
|\wt H-D|\leq |\wt V^T \wt G \wt V - P^T G P| \leq |\wt V -P|(|\wt G| + |G|) + |\wt G -G| 
\leq \frac{C(n)}{\ep^{n+2}} |\wt G -G|,
\endaligned
$$
for some constant $C(n)$. Furthermore, since the mapping $A\mapsto A^{1/2}$ is infinitely differentiable on the (convex) set of all symmetric positive-definite matrices, there exists a constant $C(n,\ep)$ (an explicit value is $C(n,\ep)=\frac{\sqrt{n}}{2} \ep^{- \frac{n+1}2}$) such that 
$$
|\wt H^{1/2}-D^{1/2}|\leq C(n,\ep)|\wt H-D|. 
$$

Finally, we define 
$$
\Fcal(\wt G)=\wt F, \text{ \ where } \wt F:=\mat{(-\wt\lambda_0)^{1/2}}00{\wt H^{1/2}}\, \wt V^T. 
$$
This definitions satisfies the conclusions of the theorem, since 
$$
| \wt F|=\max\{|\lambda_0|^{1/2}, |\wt H^{1/2}|\}=|\wt G|^{1/2}
$$
and 
$$
\wt F^T \idMink \wt F =\wt V \ \mat{\wt\lambda_0}00{\wt H}\, V^T=\wt G
$$
and 
$$
\aligned 
& |\wt F -F| =\left| \mat{(-\wt\lambda_0)^{1/2}}00{\wt H^{1/2}}\, \wt V^T 
- \mat{(-\lambda_0)^{1/2}}00{D^{1/2}}\, P^T\right|\\
& 
\leq \max\Big(\left|(-\wt\lambda_0)^{1/2}-(-\lambda_0)^{1/2}\right|, \big|\wt H^{1/2}-D^{1/2}\big|\Big)
+  \max\Big(|\lambda_0|^{1/2}|, \big|D^{1/2}\big|\Big)|\, \wt V^T-P^T|\\
& \leq C(n,\ep)|\wt G-G|
\endaligned
$$
for some constant $C(n,\ep)$ (an explicit value of which is $C(n,\ep)=C(n)\ep^{-\frac{3n+5}2}$). The proof is completed. \end{proof}

The following results on systems of first-order partial differential equations are due to S. Mardare 
\cite[Theorems 1.1 and 4.1]{sor-lpN} and will be used throughout the article.

% THM PFAFF

\begin{theorem}[Existence and uniqueness for Pfaff-type systems]
\label{pfaff}
Let $\Om$ be a connected and simply connected open subset of $\r^m$, let $p>m\ge 2$, 
let $q\geq 1$ and $\ell\geq 1$, and let $x^0\in\Om$ and $Y^0 \in \r^{q\times \ell}$. 
Then, the system of matrix equations
\begin{equation*}
\begin{aligned}
& \frac{\d Y}{\d x^\al} =Y A_\al+B_\al Y + C_\al, 
\\
& Y(x^0)=Y^0, 
\end{aligned}
\end{equation*}
has a unique solution in $W^{1,p}_{\loc}(\Om,\r^{q\times \ell})$ provided
its coefficients $A_\al\in L^p_{\loc}(\Om,\r^{\ell\times\ell})$, $B_\al\in L^p_{\loc}(\Om,\r^{q\times q})$ and $C_\al\in L^p_{\loc}(\Om,\r^{q\times \ell})$ satisfy the compatibility relations 
\begin{equation*}
\begin{aligned}
& \frac{\d A_\be}{\d x^\al}-\frac{\d A_\al}{\d x^\be}=A_\be A_\al - A_\al A_\be &&\text{ in } \D'(\Om,\r^{\ell\times\ell}),\\
& \frac{\d B_\be}{d x^\al} -\frac{\d B_\al}{\d x^\be}=B_\al B_\be - B_\be B_\al &&\text{ in } \D'(\Om,\r^{q\times q}),\\
& \frac{\d C_\be}{\d x^\al}-\frac{\d C_\al}{\d x^\be}=C_\be A_\al - C_\al A_\be + B_\al C_\be - B_\be C_\al   && \text{ in } \D'(\Om,\r^{q\times \ell}).
\end{aligned}
\end{equation*}
\end{theorem}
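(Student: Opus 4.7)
The plan is to proceed in three stages: local existence on balls around $x^0$, extension to all of $\Om$ via path-lifting using simple connectedness, and uniqueness. The linear matrix structure makes uniqueness the easiest part, so I address the existence results first.

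For local existence on a ball $B=B(x^0,r)\subset \Om$, my approach is radial integration. For each $x\in B$, parametrize the segment from $x^0$ to $x$ by $\gamma(t)=x^0+t(x-x^0)$, $t\in[0,1]$, and restrict the Pfaff system along this ray: setting $\widehat A(t,x)=(x-x^0)^\al A_\al(\gamma(t))$ and similarly for $\widehat B,\widehat C$, I consider the ODE
$$
\frac{dZ}{dt}=Z\widehat A+\widehat B Z+\widehat C, \qquad Z(0,x)=Y^0.
$$
Standard theory for ODEs with $L^1$ coefficients yields a unique absolutely continuous $Z(\cdot,x)$ for a.e.\ $x$, and I set $Y(x):=Z(1,x)$. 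The crucial claim is that this $Y$ solves the original Pfaff system on $B$. Formally, one differentiates $Z$ in $x^\be$, introduces the ``defect'' $W_\be(t,x):=\frac{\d Z}{\d x^\be}(t,x)-(ZA_\be+B_\be Z+C_\be)(\gamma(t))$, and checks that $W_\be$ satisfies a linear ODE in $t$ whose source terms cancel precisely by virtue of the three compatibility relations; since $W_\be(0,\cdot)=0$, a Gr\"onwall estimate forces $W_\be\equiv 0$, giving the conclusion at $t=1$.

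To make this rigorous under only $L^p$ regularity, my plan is to mollify $A_\al,B_\al,C_\al$ into smooth coefficients $A_\al^\ep,B_\al^\ep,C_\al^\ep$ whose compatibility defects tend to $0$ in $L^p$, solve the smoothed problem to obtain smooth $Y^\ep$, and pass to the limit. Continuous dependence of ODE flows on $L^p$ coefficients supplies $W^{1,p}$ bounds independent of $\ep$, and the Sobolev embedding $W^{1,p}\hookrightarrow C^0(\ov B)$, available because $p>m$, delivers uniform pointwise control of the $Y^\ep$, which is needed to pass to the limit in the quadratic terms $YA_\al$ and $B_\al Y$. Globalization then uses simple connectedness in the classical way: cover $\Om$ by balls on which local solutions exist, and verify that the monodromy along any loop is trivial, because the ``curvature'' encoded by the three compatibility relations vanishes distributionally; a discrete Stokes-type argument on a triangulation subordinate to the cover shows that path-lifting from $x^0$ is well defined, producing a global solution of class $W^{1,p}_\loc(\Om)$.

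Uniqueness is routine: if $Y,\wt Y$ are two solutions with the same initial value, their difference solves the homogeneous Pfaff system with zero initial datum; restricting to any segment from $x^0$ gives a linear ODE with vanishing initial datum, so the difference is zero on that segment, and then on all of $\Om$ by path-connectedness. The main obstacle is the first stage: justifying the ODE for the defect $W_\be$ rigorously when the coefficients are merely in $L^p$ and the compatibility relations hold only in $\D'$. The mollification argument must be arranged so that the distributional compatibility relations translate into genuine strong convergence of the defects in $L^p$, and this is exactly the place where the hypothesis $p>m$ is needed through $W^{1,p}\hookrightarrow C^0$.
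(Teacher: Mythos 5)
First, a point of comparison: the paper does not prove this theorem at all. It is imported verbatim (in slightly generalized form) from S.~Mardare \cite{sor-lpN}, with only the remark that the technique of that reference ``extends'' to the two-sided system $\d_\al Y=YA_\al+B_\al Y+C_\al$. So there is no in-paper proof to measure your argument against; what you have written is an outline of the strategy of the cited source (ODE along rays from $x^0$, a Gr\"onwall estimate for the defect $W_\be$, mollification, and globalization by simple connectedness), and at that level of description it is the right strategy.

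There is, however, a genuine gap at the step you yourself flag as ``crucial,'' and it is not merely a matter of writing out details. After mollification, the compatibility defect $\de^\ep_{\al\be}:=\d_\al A^\ep_\be-\d_\be A^\ep_\al-(A^\ep_\be A^\ep_\al-A^\ep_\al A^\ep_\be)$ converges to $0$ only in $L^{p/2}_{\loc}$ (products of two $L^p$ functions live in $L^{p/2}$), not in $L^p$ as you assert. The Gr\"onwall bound for the defect $W^\ep_\be(1,x)$ requires controlling $x\mapsto\int_0^1|\de^\ep(\gamma_x(t))|\,dt$, and the operator of integration along rays through the fixed center $x^0$ satisfies $\|h(\gamma_\cdot(t))\|_{L^q(B)}=t^{-m/q}\|h\|_{L^q(B_t)}$, so it is bounded on $L^q(B)$ only when $q>m$ (otherwise $\int_0^1 t^{-m/q}\,dt$ diverges). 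Since $p/2>m$ is not guaranteed by the hypothesis $p>m\ge 2$ (e.g.\ $m=2$, $p=3$), the radial Gr\"onwall argument does not by itself show $W^\ep_\be\to 0$, and hence does not show that the limit $Y$ solves the Pfaff system. This is exactly the point where \cite{sor-lpN} does substantial work (uniform $W^{1,p}$ bounds on $Y^\ep$, weak passage to the limit in an integral identity that uses the compatibility relations at the limiting level, rather than a strong-norm estimate on the approximate defects), and your proposal treats it as routine. The uniqueness argument and the globalization by continuation along paths are fine as sketched, and indeed the continuation scheme is the same one the paper uses in Lemma~\ref{converse.global}; note also that the $p>m$ hypothesis enters already in making the restriction of the coefficients to almost every ray an $L^1$ function of $t$ in an $x$-integrable way, not only through the embedding $W^{1,p}\hookrightarrow C^0$.
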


Although the above result was stated in \cite[Theorem 1.1]{sor-lpN} for systems of the form $\d_\al Y=Y A_\al + C_\al$, it is a simple matter to check that the technique therein extends and provides 
our more general result stated above.  
On the other hand, following closely the proof in \cite[Theorems 4.1 and 6.8]{sor-lpN} we can also check 
the continuous dependence property stated now. 

Let $\Om\subset \r^m$ be a bounded connected open subset with Lipschitz continuous boundary, let $x^0\in\Om$, and let $p>m$ and $\ep>0$. 
Consider any matrices $Y^0,\wt Y^0\in\r^{q\times \ell}$ and matrix fields $A_\al, \wt A_\al \in L^p(\Om,\r^{\ell\times\ell})$, $B_\al, \wt B_\al \in L^p(\Om,\r^{q\times q})$ and $C_\al, \wt C_\al\in L^p(\Om,\r^{q\times \ell})$ 
such that 
$$
\aligned
& 
|Y^0|+\|A_\al\|_{L^p(\Om)}+\|B_\al\|_{L^p(\Om)}+\|C_\al\|_{L^p(\Om)}\leq \ep^{-1},\\
&
|\wt Y^0|+
\|\wt A_\al\|_{L^p(\Om)}+\|\wt B_\al\|_{L^p(\Om)}+\|\wt C_\al\|_{L^p(\Om)}
\leq \ep^{-1}.
\endaligned
$$
If the matrix fields $Y,\wt Y \in W^{1,p}_\loc(\Om,\r^{q\times\ell})$ satisfy the equations
\begin{equation*}
\begin{aligned}
& \frac{\d Y}{\d x^\al} =Y A_\al+B_\al Y + C_\al \text{ \ and \ } Y(x^0)=Y^0, 
\\
&  \frac{\d \wt Y}{\d x^\al} =\wt Y \wt A_\al+\wt B_\al \wt Y + \wt C_\al  \text{ \ and \ }  \wt Y(x^0)=\wt Y^0
\end{aligned}
\end{equation*}
then, for some constant $C_0(\ep)>0$, 
\begin{multline*}
\|Y-\wt Y\|_{W^{1,p}(\Om)} \leq  C_0(\ep) \, \Big\{ |Y(x^0)-\wt Y(x^0)|
\\
+ \sum_\al\left(\|A_\al-\wt A_\al\|_{L^p(\Om)} + \|B_\al-\wt B_\al\|_{L^p(\Om)} + \|C_\al-\wt C_\al\|_{L^p(\Om)}\right) \Big\}.
\end{multline*} 
 
%==============================================================================================================

\section{Immersion of a Lorentzian manifold}          
\label{manifold}

Let $(\MM,g)$ be a Lorentzian manifold of dimension $n+1$. 
We want to investigate whether $(\MM,g)$ can be immersed isometrically in the Minkowski space of the same dimension, i.e., whether there exists an immersion $\imm:\MM\to\r^{n+1}$ such that $\imm^* \gMink=g$. 
In coordinates, this condition asserts that, for every local chart $\ph:U\subset \MM \to \Om\subset\r^{n+1}$, 
the composite mapping $f:=\imm\circ \ph^{-1}:\Om\to\r^{n+1}$ satisfies the following two conditions 
$$
\aligned
\det( df) & \neq 0 \, \text{ in } \Om && \text{(that is, $\imm$ is an immersion)},\\
( df)^T \idMink ( df) & = (g_{\al\be})  \, \text{ in } \Om && \text{(that is, $\imm^* \gMink=g$)},
\endaligned
$$
where $g_{\al\be}=g(\d_\al,\d_\be)$ and $\idMink$ is the matrix defined by \eqref{defA}. Here, $\d_\al$ denote the tangent vector fields on $\MM$ along the given coordinates $x^\al$. 
Observe that the matrix $(g_{\al\be})$ is symmetric, invertible, and has exactly one negative 
eigenvalue at every point of $\Om$, since $g$ is assumed to be Lorentzian.

First, we prove 
that the Riemann curvature of a spacetime of dimension $n+1$ must vanish if it is isometrically immersed 
in the Minkowski space of the same dimension.

\begin{lemma}
\label{direct}
If $\imm:(\MM,g)\to \Mink$ is an isometric immersion of class $W^{2,p}_{\loc}$ with $p>n+1$, then the Riemann curvature tensor $\Rm_g$ of $g$ vanishes in the distributional sense over $\MM$. 
\end{lemma}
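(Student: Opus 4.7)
The proof is local. In any chart $\ph:U\to\Om\subset\r^{n+1}$, set $f:=\imm\circ\ph^{-1}\in W^{2,p}_\loc(\Om)$. Since $p>n+1$, Sobolev embedding gives $df\in C^0_\loc(\Om)$, and by hypothesis $df$ is everywhere invertible. The plan is to derive the codimension-zero Gauss identity
\[
\d_\be F_\al = \Ga^\de_{\al\be}\, F_\de, \qquad F_\al:=\d_\al f,
\]
as an equality in $L^p_\loc(\Om,\r^{n+1})$, then differentiate it once more distributionally and compare the two expressions obtained by interchanging the derivatives $\d_\be$ and $\d_\gam$. Since $\d_\gam\d_\be F_\al=\d_\be\d_\gam F_\al$ holds in $\Dcal'(\Om)$, this will produce an identity of the form $R^\de_{\,\al\gam\be}\, F_\de=0$ in $\Dcal'(\Om)$, with $R^\de_{\,\al\gam\be}$ the coordinate expression for the distributional Riemann tensor of $g$. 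The invertibility of $F$ will then force $R^\de_{\,\al\gam\be}=0$, and tensoriality will yield $\Rm_g=0$ on $\MM$.

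To establish the Gauss identity I would start from $g_{\al\be}=\gMink_{ij}\,\d_\al f^i\,\d_\be f^j$ (valid in $W^{1,p}_\loc$, since $p>n+1$ makes $W^{1,p}$ an algebra), apply the weak derivative $\d_\gam$, and take the usual cyclic Koszul combination; this yields
\[
\gMink_{ij}\,\d_\al\d_\be f^i\,\d_\gam f^j = \tfrac12\bigl(\d_\al g_{\be\gam}+\d_\be g_{\al\gam}-\d_\gam g_{\al\be}\bigr) = g_{\gam\de}\,\Ga^\de_{\al\be}
\]
in $L^p_\loc$, and contracting with the continuous inverse $g^{\gam\de}$ (well-defined since $\det g\ne 0$), together with the fact that $\{\d_\gam f\}$ is a pointwise basis of $\r^{n+1}$, gives the Gauss identity. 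Applying $\d_\gam$ distributionally via the Leibniz rule
\[
\d_\gam\bigl(\Ga^\de_{\al\be}\, F_\de\bigr) = \bigl(\d_\gam\Ga^\de_{\al\be}\bigr) F_\de + \Ga^\de_{\al\be}\,\d_\gam F_\de
\]
and substituting $\d_\gam F_\de=\Ga^\ep_{\de\gam}F_\ep$, the antisymmetrization in $\be\leftrightarrow\gam$ collapses precisely to the Riemann combination $R^\de_{\,\al\gam\be}F_\de$. Multiplication by the continuous matrix $F^{-1}$, which is an admissible multiplier on distributions, then eliminates $F_\de$ and yields $R^\de_{\,\al\gam\be}=0$ in $\Dcal'(\Om)$.

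The main technical obstacle is the justification of the distributional Leibniz rule above. The term $(\d_\gam\Ga^\de_{\al\be})\,F_\de$ pairs a $W^{-1,p}_\loc$ distribution with the vector field $F_\de$, and this product has a canonical meaning precisely because $F_\de$ is continuous -- which is where the assumption $p>n+1$ enters crucially. The companion term $\Ga^\de_{\al\be}\,\d_\gam F_\de$, being a product of two $L^p_\loc$ functions, lies in $L^{p/2}_\loc\subset L^1_\loc$ and presents no difficulty. Were $p\le n+1$, neither the distributional Riemann tensor of $g$ nor the Gauss identity above would retain a canonical meaning, so the threshold $p>n+1$ is essential to the argument.
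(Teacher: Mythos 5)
Your argument is correct and follows essentially the same route as the paper: reduce to a chart, establish the Gauss identity $\d_\al F_\si=\Ga^\be_{\al\si}F_\be$ from the metric relation, antisymmetrize the distributional second derivatives, and strip off the invertible frame $F$ (the paper does this by testing against $F^\ga w$ with $F^\ga$ the dual basis, which is the same as your multiplication by $F^{-1}$). One small precision: the product $(\d_\gam\Ga^\de_{\al\be})F_\de$ is legitimate not merely because $F_\de$ is continuous but because $F_\de\in W^{1,p}_\loc$ with $p>n+1$, which is what allows the integration by parts defining the pairing — exactly the regularity you in fact use when substituting $\d_\gam F_\de=\Ga^\ep_{\de\gam}F_\ep$.
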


\begin{proof} 
For smooth immersions, this is a classical result. We need to check that the classical arguments
carry over to an immersion $f$ that is only of class $W^{2,p}_{\loc}$ with $p>n+1$. 
This is in fact the minimal regularity for which the Riemann curvature tensor of $g$ is well-defined as a distribution. 

The assumptions of the lemma show that the mapping $f=\imm\circ\ph^{-1}$ 
belongs to the space $W^{2,p}_{\loc}(\Om,\r^{n+1})$ 
and that the covariant components $g_{\al\be}$ of the metric $g$ satisfy the relations
\begin{equation}
\label{g=g_M}
( df)^T \idMink ( df) = (g_{\al\be}), 
\qquad 
 g_{\al\be}(x)=-\frac{\d f_1}{\d x^\al}\frac{\d f^1}{\d x^\be}
 + \frac{\d f_i}{\d x^\al}\frac{\d f^i}{\d x^\be},
\end{equation}
where $f_\alpha=f^\alpha$ denote the covariant and contravariant components of $f$ with respect to a given Cartesian basis in $\RR^{n+1}$. Since $\displaystyle \frac{\d f_\al}{\d x^\be}\in W^{1,p}_{\loc}(\Om)$ and this space is in fact an algebra (we use here the assumption $p>n+1$), the above relation implies that $g_{\al\be}\in W^{1,p}_{\loc}(\Om)$. 
In view of the definitions of the inverse of a matrix and of the Christoffel symbols, this implies that 
$$
\aligned
& (g^{\si\nu})=(g_{\al\be})^{-1}\in W^{1,p}_{\loc}(\Om),\\ 
& \Gamma^\si_{\al\be} := \frac12 g^{\si\nu} \Big(\frac{\partial}{\partial x^\al}  g_{\be\nu}+ 
\frac{\partial}{\partial x^\be} g_{\nu \al} - \frac{\partial}{\partial x^\nu} g_{\al\be}\Big) \in L^p_{\loc}(\Om).
\endaligned
$$ 
Hence, the Riemann curvature tensor of the metric $g$, defined by 
$$
R^\ta_{\ \si\al\be} := \frac{\partial}{\partial x^\al} \Gamma^\ta_{\be\si} - \frac{\partial}{\partial x^\be} \Gamma^\ta_{\al\si} + 
\Gamma^\nu_{\be\si} \Gamma^\ta_{\al\nu} - \Gamma^\nu_{\al\si} \Gamma^\ta_{\be\nu},
$$
is well defined as the sum of a distribution in $W_\loc^{-1,p}(\Om)$ and a function $L^{p/2}_\loc(\Om)$. Here, the space $W_\loc^{-1,p}(\Om)$ is defined by 
$$
W_\loc^{-1,p}(\Om):=\Big\{u\in\Dcal'(\Om); \ u=f+\sum_{\al=0}^n \frac{\d f^\alpha}{\d x^\alpha}, \text{ for some } f,f^\alpha\in L^p_\loc(\Om)\Big\}.
$$

We will now show that $R^\ta_{\ \si\al\be}=0$ in the distributional sense. 
Define $\displaystyle F_\si(x):=\frac{\d f}{\d x^\si}(x)$, $\si=0,1,\ldots,n$. 
Since these vectors form a basis in $\r^{n+1}$ for every $x\in\Om$, 
the vector $\frac{\d F_\si}{\d x^\al}(x)$ can be decomposed over this basis
and so there exist coefficients $C^\ta_{\al\si}$ such that 
\begin{equation}
\label{connect}
\frac{\d F_\si}{\d x^\al}=C_{\al\si}^\be F_\be \quad \text{ in } \Om.
\end{equation}
Since $\frac{\d F_\si}{\d x^\al}=\frac{\d F_\al}{\d x^\si}$ and $(F_\si)^T \idMink F_\ta=g_{\si\ta}$ (see \eqref{g=g_M}), the coefficients $C^\be_{\al\si}$ must satisfy the relations 
$$
\aligned
C^\be_{\al\si}&=C^\be_{\si\al},\\
\frac{\del g_{\si\ta}}{\d x^\al}&=C_{\al\si}^\be g_{\be\ta}+g_{\si\be}C_{\al\ta}^\be.
\endaligned
$$ 
Solving this system shows that 
$$
C^\be_{\al\si}
=\frac12 g^{\be\nu} (\frac{\partial}{\partial x^\al} g_{\nu\si} + \frac{\partial}{\partial x^\si} g_{\al\nu}-\frac{\partial}{\partial x^\nu} g_{\al\si}) 
=\Ga^\be_{\al\si}.
$$
Hence, the partial derivatives $F_\si=\frac{\d f}{\d x^\si}$ of $f$ actually 
satisfy the equations 
$$
\frac{\d F_\si}{\d x^\al}=\Ga_{\al\si}^\be F_\be,
$$
which can be rewritten as matrix equations, that is,  
\begin{equation}
\label{connect.bis}
\frac{\d}{\d x^\al}(df) = ( df)\Ga_\al, 
\end{equation}
where $\Ga_\al:=(\Ga_{\al\si}^\be)$ denote the matrix field with $\Ga_{\al\si}^\be$ as its component at the $\be$-row and $\si$-column. In particular, we see that the mapping $\imm$ preserves the connection. 

To conclude, we now rely on the commutativity property for second derivatives of the fields $F_\al$ 
and obtain: 
$$
 \frac{\d}{\d x^\be}\left(\Ga_{\al\si}^\ta F_\ta\right)
=\frac{\d}{\d x^\al}\left(\Ga_{\be\si}^\ta F_\ta\right). 
$$
Combined with the relation $\frac{\d F_\si}{\d x^\al}=\Ga_{\al\si}^\be F_\be$, this 
implies that, for every test function $v\in \Dcal(\Om)$, 
$$
\int_\Om\Big(-\Ga_{\al\si}^\ta\frac{\d}{\d x^\be}(F_\ta v)
+ \Ga_{\al\si}^\ta \Ga_{\be\ta}^\nu (F_\nu v)  \Big)\, dx
=
\int_\Om\Big(-\Ga_{\be\si}^\ta\frac{\d}{\d x^\al}(F_\ta v)
+ \Ga_{\be\si}^\ta \Ga_{\al\ta}^\nu (F_\nu v) \Big)\, dx.
$$
Since the vectors fields $F_0,F_1,...,F_n$ form a basis in $W^{1,p}_\loc(\Om;\r^{n+1})$, we can define the dual basis $F^0,F^1,...,F^n$ and use the components of the vector field $F^\ga w$, where $w$ is any test function in $\Dcal(\Om)$, in lieu of $v$ in the above equations. This implies that 
$$
\int_\Om\Big(-\Ga_{\al\si}^\ga\frac{\d}{\d x^\be} w
+ \Ga_{\al\si}^\ta \Ga_{\be\ta}^\ga w  \Big)\, dx
=
\int_\Om\Big(-\Ga_{\be\si}^\ga\frac{\d}{\d x^\al} w
+ \Ga_{\be\si}^\ta \Ga_{\al\ta}^\ga w \Big)\, dx. 
$$
Since the test function $w$ was arbitrary, the above equation is equivalent with the following equation between distributions in $\Dcal'(\Om)$: 
$$
R^\ta_{\  \si \al\be} = 0. 
$$
\end{proof}

The next lemma establishes a partial converse to Lemma~\ref{direct}, that is,
 if the Riemann curvature of the metric $g$ vanishes, then the Lorentzian manifold $(\MM,g)$ can be \emph{locally} isometrically immersed in the Minkowski space of the same dimension. We recall that an isometry $\pi:\Mink\to\Mink$ of the Minkowski spacetime $\Mink$ was defined in \eqref{isometryMink}.

\begin{lemma}
\label{converse.local} 
Suppose that $g$ is of class $W^{1,p}_\loc(\MM)$ with $p>n+1$ and that $\Rm_g=0$ in the distributional sense. 
Consider a connected and simply 
connected open subset $U\subset \MM$ that can de described by a single local chart. 
Then, there exists an immersion $\imm:U\to\r^{n+1}$ of class $W^{2,p}_\loc(U)$ such that $\imm^* \gMink = g$. Moreover, if $\pi$ is an isometry of the Minkowski spacetime $\Mink$, then $\pi\circ\imm$ also satisfy $(\pi\circ\imm)^* \gMink = g$.
\end{lemma}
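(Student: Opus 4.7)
The plan is to transport the problem to Euclidean space via the given chart $\varphi:U\to\Om\subset\r^{n+1}$, so that it becomes: find $f\in W^{2,p}_\loc(\Om,\r^{n+1})$ with $(df)^T\idMink(df)=(g_{\al\be})$. The standard (smooth) construction via the Levi-Civita connection, as recalled in the proof of Lemma~\ref{direct}, suggests that the partial derivatives $F_\al:=\d f/\d x^\al$ should satisfy the linear system
$$
\frac{\d F_\si}{\d x^\al}=\Ga^\be_{\al\si} F_\be,
$$
so the first step is to solve this as a Pfaff-type system for the unknown $(n+1)\times(n+1)$ matrix field $F$ whose columns are the vectors $F_\al$. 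Here $\Ga^\be_{\al\si}\in L^p_\loc(\Om)$ because $g_{\al\be}\in W^{1,p}_\loc(\Om)$, and $p>n+1$ ensures this space is an algebra, matching the regularity assumed in Theorem~\ref{pfaff}.

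Next I would verify the compatibility conditions. A direct distributional computation shows that the Riemann tensor $R^\ta_{\ \si\al\be}$ coincides with the entries of $\d_\be\Ga_\al-\d_\al\Ga_\be-(\Ga_\be\Ga_\al-\Ga_\al\Ga_\be)$, so the assumption $\Rm_g=0$ translates exactly into the integrability condition of Theorem~\ref{pfaff}. For the initial data I would pick any point $x^0\in\Om$, use Lemma~\ref{matrixdecomp} to write $(g_{\al\be}(x^0))=\Fcal(g(x^0))^T\idMink\Fcal(g(x^0))$, and take $F(x^0):=\Fcal(g(x^0))^T$ (so the columns of $F(x^0)$ realize the required inner products at $x^0$). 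Theorem~\ref{pfaff} then delivers a unique $F\in W^{1,p}_\loc(\Om,\r^{(n+1)\times(n+1)})$ solving the system.

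The key verification is that $F^T\idMink F=(g_{\al\be})$ \emph{everywhere}, not just at $x^0$. Setting $H_{\al\be}:=(F_\al)^T\idMink F_\be-g_{\al\be}$ and using both the Pfaff equation satisfied by $F$ and the identity $\d_\ga g_{\al\be}=\Ga^\si_{\ga\al}g_{\si\be}+\Ga^\si_{\ga\be}g_{\al\si}$ (which follows from the definition of the Christoffel symbols), one checks that $H$ solves a homogeneous linear first-order system of the form $\d_\ga H_{\al\be}=\Ga^\si_{\ga\al}H_{\si\be}+\Ga^\si_{\ga\be}H_{\al\si}$ with $H(x^0)=0$; the uniqueness part of Theorem~\ref{pfaff} forces $H\equiv 0$. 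Because $(g_{\al\be})$ is invertible at every point, $F$ is invertible on $\Om$, which will guarantee that the immersion is non-degenerate.

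Finally, because $\Ga^\be_{\al\si}=\Ga^\be_{\si\al}$, the equation $\d_\al F_\si=\Ga^\be_{\al\si}F_\be$ forces $\d_\al F_\si=\d_\si F_\al$, so each column of $F$ is a closed $\r^{n+1}$-valued one-form on the connected, simply connected set $\Om$; the Poincar\'e lemma produces $f\in W^{2,p}_\loc(\Om,\r^{n+1})$ with $df=F$, and $\imm:=f\circ\varphi$ is the desired isometric immersion of $(U,g)$ into $\Mink$. Invariance under a Minkowski isometry $\pi(y)=v+Qy$ with $Q^T\idMink Q=\idMink$ is immediate: $d(\pi\circ\imm)=Q\,d\imm$, hence $(\pi\circ\imm)^*\gMink=(d\imm)^T(Q^T\idMink Q)(d\imm)=(d\imm)^T\idMink(d\imm)=g$. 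I expect the only delicate step to be the rigorous handling of the identity $H\equiv 0$ at the stated Sobolev regularity, since the coefficients of the auxiliary Pfaff system for $H$ are only $L^p$; this is precisely what the $W^{1,p}$ framework of Theorem~\ref{pfaff} is designed to accommodate.
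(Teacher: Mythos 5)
Your proposal is correct and follows essentially the same route as the paper: solve the Pfaff system $\d_\al F = F\Ga_\al$ with initial datum supplied by Lemma~\ref{matrixdecomp}, identify the compatibility conditions with $\Rm_g=0$, propagate the pointwise identity $F^T\idMink F=(g_{\al\be})$ from $x^0$ to all of $\Om$ via the uniqueness part of Theorem~\ref{pfaff}, and then integrate $df=F$ by the Poincar\'e lemma using the symmetry of the Christoffel symbols. The only slip is notational: since Lemma~\ref{matrixdecomp} gives $G=\Fcal(G)^T\idMink\Fcal(G)$, the initial value should be $F(x^0)=\Fcal(g(x^0))$ itself (whose columns realize the required inner products), not its transpose.
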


\begin{proof}
Let $\ph:U\subset \MM \to \Om\subset \r^{n+1}$ be a
 local chart such that $\Om:=\ph(U)$ is connected and simply 
connected. The components $g_{\al\be}$ of the metric with respect to this chart
belongs to $W^{1,p}_\loc(\Om)$. This space being an algebra, the inverse of matrix field $(g_{\al\be})$ 
and the Christoffel symbols $\Gamma^\si_{\al\be}$ satisfy 
$$
\aligned 
& (g^{\si\ta}):=(g_{\al\be})^{-1}\in W^{1,p}_\loc(\Om),\\
& \Gamma^\si_{\al\be} := \frac12 g^{\si\nu} (\frac{\partial}{\partial x^\al} g_{\be\nu} + 
\frac{\partial}{\partial x^\be} g_{\nu \al} - \frac{\partial}{\partial x^\nu} g_{\al\be})\in L^p_\loc(\Om).
\endaligned
$$ 
Finding an isometric immersion $\imm:(U,g) \to (\r^{n+1},\gMink)$ is equivalent to finding a mapping $f:\Om \to \r^{n+1}$ such that 
\begin{equation}
\label{eq2}
( df)^T \idMink ( df)=(g_{\si\ta}) \text{ in } \Om,
\end{equation}
where $df$ denotes the gradient of $f$ (the set $\Om$ and the vectorial space $\r^{n+1}$ are equipped with the usual Cartesian coordinates and bases). 
Thus we are left with solving a nonlinear matrix equation in the unknown $f$. We are going to show
that the equation \eqref{eq2} can be solved if the components of the matrix $(g_{\si\ta})$ are the covariant components of a metric whose Riemann curvature tensor vanishes in the distributional sense. 

We first observe that the equation \eqref{eq2} can be solved ``at one point'' $x_\star\in\Om$, since by Lemma \ref{matrixdecomp} there exists a matrix $F_\star\in \r^{(n+1)\times (n+1)}$ that satisfies
$$
F_\star^T \idMink F_\star = (g_{\si\ta}(x_\star)).
$$

Solving the nonlinear 
equation \eqref{eq2} is now reduced to solving two \emph{linear} systems. The first one is the Pfaff system 
\begin{equation}\label{1}
\aligned
\frac{\d F}{\d x^\al}& = F \,\Gamma_\al \quad \text{ a.e. in } \Om,\\
F(x_\star)& =F_\star,
\endaligned
\end{equation}
where, for every $\al$, $\Ga_\al:=(\Ga_{\al\be}^\si)$ denotes the matrix field with $\Ga_{\al\be}^\si$ as its element at the $\si$-row and $\be$-column. This system has a solution $F\in W^{1,p}_\loc(\Om;\r^{(n+1)\times (n+1)})$ by Theorem~\ref{pfaff}, since the compatibility conditions 
$$
\frac{\d \Ga_\be}{\d x^\al}-\frac{\d \Ga_\al}{\d x^\be}=\Ga_\be \Ga_\al - \Ga_\al \Ga_\be 
\qquad \text{ in } \D'(\Om;\r^{(n+1)\times (n+1)})
$$
are equivalent to the fact that the Riemann curvature tensor of $g$ vanishes in the distributional sense.  

The second system is the Poincar\'e system 
\begin{equation}
\label{1bis}
df = F  \quad \text{ in } \Om, 
\end{equation}
where $F$ is the solution of the first system \eqref{1}. The above Poincar\'e system has a solution $f\in W^{2,p}_\loc(\Om;\r^{n+1})$ since  the compatibility conditions that $F$ must satisfy, namely 
$$
\frac{\d F_\be}{\d x^\al}= \frac{\d F_\al}{\d x^\be},  \quad \text{ where } F_\al  \text{ denotes the $\al$-column of the matrix field } F, 
$$
are equivalent to relations $\Ga_{\al\be}^\si=\Ga_{\be\al}^\si$ in view of \eqref{1}. Or these relations are indeed satisfied by the Christoffel symbols. 

It remains now to prove that the solution $f$ of the system \eqref{1bis} satisfies the equation $( df)^T \idMink ( df)=(g_{\si\ta})$. 
Combining relations \eqref{1} and \eqref{1bis} shows that $f$ satisfies the equation $\displaystyle \frac{\d ( df)}{\d x^\al}=( df)\Ga_\al$. This implies that the matrix field $( df)^T \idMink ( df)$ satisfies the equation
$$
\frac{\d}{\d x^\al}\Big[ ( df)^T \idMink ( df)\Big]
= \Ga_\al^T \Big[ ( df)^T \idMink ( df)\Big]+ \Big[( df)^T \idMink ( df)\Big] \Ga_\al. 
$$
But the matrix field $(g_{\si\ta})$ also satisfies this equation, i.e., 
$$
\frac{\d}{\d x^\al}(g_{\si\ta}) = \Ga_\al^T (g_{\si\ta}) + (g_{\si\ta})\Ga_\al, 
$$
since 
this is simply a rewriting of the definition of the Christoffel symbols. In addition, the definition of the matrix $F_\star$ 
shows that $\Big[( df)^T \idMink ( df)\Big](x_\star)=(g_{\si\ta})(x_\star)$. Therefore the uniqueness part of Theorem~\ref{pfaff} shows that the two solutions coincide, i.e., $( df)^T \idMink ( df)=(g_{\si\ta})$ in the entire domain $\Om$. 

Finally, if $\pi$ is any isometry of the Minkowski spacetime $(\r^{n+1},\gMink)$, it is easily checked that $\pi\circ\imm$ is also an isometric immersion of $(U,g)$ into $(\r^{n+1},\gMink)$. 
 \end{proof}

Lemma \ref{converse.local} is a local existence result, in the sense that the isometric immersion $\imm$ is defined only on a subset $U\subset \MM$. But we have a good control of the size  of this neighborhood, since $U$ only needs to be connected, simply 
connected, and defined by a single local chart. This fact, together with the uniqueness result of Lemma \ref{uniqueness} below, allow us to establish a global existence result (i.e., the isometric immersion $\imm$ is defined over the entire manifold $\MM$) when $\MM$ is simply connected (see Lemma~\ref{converse.global} below). 

We next prove a stability result for the isometric immersion $\imm$ defined by Lemma \ref{converse.local}.

\begin{lemma}
\label{stability} 
For each $\ep>0$ and connected smooth open set $\Acal\Subset \MM$, there exists a constant $C=C(\ep,\Acal)$ with the following property: If 
the metrics $g=\imm^*\gMink$ and $\gbis={\immbis}^*\gMink$ induced by the immersions $\imm,\immbis:\MM\to\Mink$ of class $W^{2,p}_\loc$ with $p>n+1$ satisfy  
$$
\min\big( |\det g|, |\det\gbis| \big) \geq \ep, 
\qquad 
\max\big(\|g\|_{W^{1,p}(\Acal)},\|\gbis\|_{W^{1,p}(\Acal)}\big) \leq \frac{1}{\ep}\ ,
$$
then there exists isometries $\pi$ and $\wt\pi$ of the Minkowski space $\Mink$ such that 
$$
\|\wt\pi\circ\immbis-\pi\circ\imm\|_{W^{2,p}(\Acal)}\leq C \, \|\gbis- g\|_{W^{1,p}(\Acal)}.
$$
\end{lemma}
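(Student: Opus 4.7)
The plan is to reduce the stability claim to the continuous dependence estimate for Pfaff-type systems stated right after Theorem~\ref{pfaff}, applied in local coordinates, after normalizing $\imm$ and $\immbis$ by Minkowski isometries so that their initial data at a fixed base point differ by a quantity controlled by $\|g-\gbis\|_{W^{1,p}(\Acal)}$. I would first work in a single coordinate chart $U$ containing a fixed base point $x_\star\in\Acal$. Since $p>n+1$, the embedding $W^{1,p}\hookrightarrow C^0$ combined with the hypotheses forces $(g_{\al\be}(x_\star))$ and $(\gbis_{\al\be}(x_\star))$ to lie in some $\LL_{\ep'}(n+1)$ with $\ep'=\ep'(\ep)>0$. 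Lemma~\ref{matrixdecomp} then produces matrices $F_\star$ and $\wt F_\star$ satisfying $F_\star^T\idMink F_\star=(g_{\al\be}(x_\star))$, $\wt F_\star^T\idMink\wt F_\star=(\gbis_{\al\be}(x_\star))$, and $|F_\star-\wt F_\star|\le C(\ep)\|g-\gbis\|_{W^{1,p}(\Acal)}$. Because $d\imm(x_\star)$ and $d\immbis(x_\star)$ satisfy the same matrix identities, there exist $Q,\wt Q\in \OO^\eta(n+1)$ with $Q\,d\imm(x_\star)=F_\star$ and $\wt Q\,d\immbis(x_\star)=\wt F_\star$; the Minkowski isometries $\pi(y):=Q(y-\imm(x_\star))$ and $\wt\pi(y):=\wt Q(y-\immbis(x_\star))$ then normalize the immersions to vanish at $x_\star$ with prescribed gradients $F_\star$ and $\wt F_\star$ there.

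Following the proof of Lemma~\ref{converse.local}, the gradient matrices $F:=d(\pi\circ\imm)$ and $\wt F:=d(\wt\pi\circ\immbis)$ solve the linear Pfaff systems $\d_\al F=F\Ga_\al$ with $F(x_\star)=F_\star$, and $\d_\al\wt F=\wt F\wt\Ga_\al$ with $\wt F(x_\star)=\wt F_\star$, where $\Ga_\al$ and $\wt\Ga_\al$ are the Christoffel symbols of $g$ and $\gbis$. Since $W^{1,p}$ is an algebra for $p>n+1$ and $|\det g|,|\det\gbis|\ge\ep$, a direct computation shows that the map $g\mapsto \Ga_\al$ is locally Lipschitz from $W^{1,p}$ to $L^p$, with $\|\Ga_\al-\wt\Ga_\al\|_{L^p(U)}\le C(\ep)\|g-\gbis\|_{W^{1,p}(U)}$ and uniform $L^p$ bounds on $\Ga_\al,\wt\Ga_\al$ depending only on $\ep$. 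The continuous dependence estimate following Theorem~\ref{pfaff} then gives $\|F-\wt F\|_{W^{1,p}(U)}\le C(\ep,U)\|g-\gbis\|_{W^{1,p}(\Acal)}$, and integrating $d(\pi\circ\imm-\wt\pi\circ\immbis)=F-\wt F$ from the common value $0$ at $x_\star$ via a Poincar\'e-type inequality upgrades this to the required $W^{2,p}$ estimate on $U$.

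The main obstacle, and the reason the above is only a local result, is globalization: $\Acal$ generally does not fit into a single chart while the isometries $\pi,\wt\pi$ must be chosen once and for all. My plan is to cover the precompact connected set $\Acal$ by a finite chain of relatively compact coordinate balls $U_1,\ldots,U_N$ with $U_j\cap U_{j+1}\ne\emptyset$ and $x_\star\in U_1$, use the previous step on $U_1$ to fix $\pi,\wt\pi$, and then propagate the $W^{2,p}$ bound along the chain. On $U_{j+1}$, I would pick $x_{j+1}\in U_j\cap U_{j+1}$ and rerun the Pfaff stability estimate in a chart of $U_{j+1}$ with base point $x_{j+1}$, using the already established bound on $U_j$ together with the Sobolev embedding $W^{2,p}\hookrightarrow C^1$ to control the new initial data $(\pi\circ\imm)(x_{j+1})-(\wt\pi\circ\immbis)(x_{j+1})$ and $F(x_{j+1})-\wt F(x_{j+1})$ by $\|g-\gbis\|_{W^{1,p}(\Acal)}$. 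The uniqueness part of Theorem~\ref{pfaff} ensures that no additional Minkowski isometry needs to be inserted when crossing from $U_j$ to $U_{j+1}$, so iterating along the chain and summing the local estimates produces the uniform bound on $\Acal$ stated in the lemma.
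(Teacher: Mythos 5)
Your proposal is correct and follows essentially the same route as the paper: normalize at a base point via Lemma~\ref{matrixdecomp} and Minkowski-orthogonal matrices, apply the continuous-dependence estimate for Pfaff systems to $Q\,df$ and $\wt Q\,d\wt f$ together with the Lipschitz dependence of the Christoffel symbols on $g$, and conclude with Poincar\'e--Wirtinger. Your chaining argument over a finite cover of $\Acal$ is just a more explicit version of the paper's closing remark that the estimate extends from $U$ to $\Acal$ by connectedness and compactness of $\overline{\Acal}$.
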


\begin{remark} 
(i) The constant $C(\ep,\Acal)$ may go to infinity when either $\ep\to 0$ or $\Acal$ ``approaches'' the whole manifold $\MM$. 

(ii) The inequality above shows that small perturbations of the metric $g$ induce small perturbations of the immersion $\imm$, defined up to an isometry of the Minkowski space. 
\end{remark}

\begin{proof}
Let $\ph:U\subset \MM\to \Om\subset\r^{n+1}$ be a local chart such that $U$ is connected with a smooth boundary. Let $f:=\imm\circ\ph^{-1}$ and $\wt f:=\immbis\circ\ph^{-1}$.  
In coordinates, the relations $g=\imm^*\gMink$ and $\gbis={\immbis}^*\gMink$ read 
\begin{equation}
\label{eq3}
(df)^T\idMink( df)=(g_{\si\ta}), 
\qquad  
(d\wt f)^T\idMink(d\wt f)=(\gbis_{\si\ta}) \quad \text{ in } \Om.
\end{equation}
As in the proof of Lemma~\ref{direct} (see \eqref{connect.bis}), these relations imply that $f,\wt f\in W^{2,p}_\loc(\Om;\r^{n+1})$  and that the matrix fields $ df$ and $d\wt f$ satisfy the equations 
$$
\frac{\d}{\d x^\al}( df)=( df) \Ga_\al, 
\qquad 
\frac{\d}{\d x^\al}(d\wt f)=(d\wt f) \wt \Ga_\al \qquad \text{ in } \Om,
$$
where $\Ga_\al$ is the matrix field defined from the metric $g$ as in the proof of Lemma~\ref{converse.local} and $\wt\Ga_\al$ is defined in the same way but with $g$ replaced by $\gbis$. 

Fix any point $x_\star\in\Om$. By Lemma~\ref{matrixdecomp}, there exist matrices $F$ and $\wt F$ such that 
\begin{equation}
\label{atxstar}
\aligned
& (g_{\si\ta}(x_\star))=F^T \idMink F \ \text{ and } \ 
(\gbis_{\si\ta}(x_\star))=\wt F^T \idMink \wt F, \\
& |F|=\big| (g_{\si\ta}(x_\star))\big|^{1/2} \text{ \ and \ } |\wt F|=\big| (\wt g_{\si\ta}(x_\star))\big|^{1/2}\\
& |\wt F- F| \leq C \, |(g_{\si\ta}(x_\star))-(\gbis_{\si\ta}(x_\star))| \leq C\, \|(g_{\si\ta})-(\gbis_{\si\ta})\|_{W^{1,p}(\Om)}, 
\endaligned
\end{equation}
for some constant $C$ depending on $\ep,n$. 
Combined with Equations \eqref{eq3} (applied at the point $x_\star$), the first relations above imply that the matrices $Q=F(df(x_\star))^{-1}$ and $\wt Q=\wt F (d\wt f(x_\star))^{-1}$ (the matrices $(df(x_\star))$ and $(d\wt f(x_\star))$ are invertible since $\imm$ and $\immbis$ are immersions) satisfy the relations 
$$
Q^T \idMink Q =\idMink, \qquad \wt Q^T \idMink \wt Q =\idMink.
$$
Since the matrix fields $(Q \, df)$ and $(\wt Q \,d\wt f)$ satisfy the equations 
$$
\aligned
\frac{\d}{\d x^\al}(Q \, df)=(Q \, df) \, \Ga_\al,
& \quad
\frac{\d}{\d x^\al}(\wt Q \, d\wt f))=(\wt Q \, d\wt f) \,\wt \Ga_\al 
\quad \text{ in } \Om,\\
(Q \, df)(x_\star)= F,
& \qquad  
(\wt Q \,d\wt f)(x_\star) = \wt F,
\endaligned
$$
the stability property for Pfaff's systems stated in Section~\ref{prel} implies that 
$$
\|Q \, df - \wt Q \, d\wt f \|_{W^{1,p}(\Om)} \leq C \, 
\Big( |F- \wt F|
+ \sum_\al\|\Ga_\al-\wt\Ga_\al\|_{L^p(\Om)}\Big),
$$
where the constant $C$ depends only on $n, \ep$ and $\Om$. 

On the other hand, the definition of the Christoffel symbols $\Ga_{\al\be}^\si$ and $\wt\Ga_{\al\be}^\si$ shows that
$$
\sum_\al\|\Ga_\al-\wt\Ga_\al\|_{L^p(\Om)}
\leq C \, \|(g_{\si\ta})-(\gbis_{\si\ta})\|_{W^{1,p}(\Om)}.
$$
Usig this inequality and the second inequality of \eqref{atxstar} in the previous inequality yields 
$$
\|Q \, df-\wt Q \, d\wt f \|_{W^{1,p}(\Om)} \leq C \, \|(g_{\si\ta})-(\gbis_{\si\ta})\|_{W^{1,p}(\Om)}.
$$
This inequality in turn implies (thanks to Poincar\'e-Wirtinger's inequality) that 
$$
\|(v+Q f) - (\wt v+\wt Q \wt f)\|_{W^{2,p}(\Om)} \leq C \, 
\|(g_{\si\ta})-(\gbis_{\si\ta})\|_{W^{1,p}(\Om)}, 
$$
where $v=-Qf(x_\star)$ and $\wt v=- \wt Q \wt f(x_\star)$. Since the matrices $Q$ and $\wt Q$ are Minkowski-orthogonal, the mappings $\pi: y\in\r^{n+1}\mapsto v+ Qy\in\r^{n+1}$ and  $\wt \pi: y\in\r^{n+1}\mapsto \wt v+ \wt Qy\in\r^{n+1}$ are isometries of the Minkowski spacetime $\Mink$. Letting $x=\ph(p)$, $p\in U$, in the above inequality shows that
$$
\|\wt\pi\circ \immbis-\pi\circ\psi\|_{W^{2,p}(U)} \leq C \, \|\gbis-g\|_{W^{1,p}(U)}. 
$$
This inequality still holds when $U$ is replaced with the possibly larger set $\Acal$ since $\Acal$ is connected and $\overline{\Acal}$ is compact.
\end{proof}

An immediate consequence of the previous lemma is the following uniqueness result. 

\begin{lemma}
\label{uniqueness} 
If $\imm,\immbis:(\MM,g)\to (\r^{n+1},\gMink)$ are isometric immersions of class $W^{2,p}_{loc}$ with $p>n+1$, then for every connected component of $\MM$ there exists an isometry $\ta$ of the Minkowski spacetime $(\r^{n+1},\gMink)$ such that $\immbis=\ta\circ\imm$. 
\end{lemma}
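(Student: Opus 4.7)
The plan is to deduce the statement directly from Lemma \ref{stability} applied to $\imm$ and $\immbis$, which both induce the same metric $g$, and then to globalize the locally obtained identification by a continuation argument based on the affine nature of Minkowski isometries.

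First I would fix a point $p$ in a given connected component of $\MM$. Since $g$ is Lorentzian and of class $W^{1,p}_\loc$, there exist $\ep>0$ and a connected smooth open neighborhood $\Acal \Subset \MM$ of $p$ such that $|\det g|\geq \ep$ on $\Acal$ and $\|g\|_{W^{1,p}(\Acal)}\leq 1/\ep$. Applying Lemma \ref{stability} with the choice $\gbis := \immbis^*\gMink = g = \imm^*\gMink$, the right-hand side of the stability estimate vanishes identically, so there exist isometries $\pi_p,\wt\pi_p$ of $\Mink$ such that $\wt\pi_p\circ\immbis = \pi_p\circ\imm$ on $\Acal$. Setting $\ta_p := \wt\pi_p^{-1}\circ\pi_p$, which is again an isometry of $\Mink$, I obtain the local identification $\immbis = \ta_p\circ\imm$ on $\Acal$.

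The main obstacle is then to verify that the locally constructed isometry $\ta_p$ is actually independent of $p$ within a connected component. For this, I would combine two observations. On the one hand, $\imm$ is an immersion from an $(n+1)$-dimensional manifold into the $(n+1)$-dimensional space $\r^{n+1}$, and is therefore locally a diffeomorphism, so that $\imm(\Acal)$ contains a nonempty open subset of $\r^{n+1}$. On the other hand, every isometry of $\Mink$ is affine by \eqref{isometryMink}, hence is uniquely determined by its restriction to any nonempty open subset of $\r^{n+1}$. Given two points $p,q$ in the same connected component, I would join them by a continuous path and cover this path by finitely many neighborhoods of the type above, producing isometries $\ta_{p_0},\dots,\ta_{p_N}$ with $p_0=p$, $p_N=q$ and with each consecutive pair of neighborhoods intersecting. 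On any nonempty overlap of two such neighborhoods, both corresponding isometries coincide with $\immbis\circ(\imm|_V)^{-1}$ on the open set $\imm(V)$, where $V$ is a small open set on which $\imm$ is a diffeomorphism; by the affine rigidity recalled above, this forces the two isometries to agree on all of $\Mink$. Chaining this identification along the covering yields a single isometry $\ta$ of $\Mink$ such that $\immbis = \ta\circ\imm$ on the entire connected component, which completes the argument.
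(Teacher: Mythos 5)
Your proof is correct and follows the paper's intended route: the paper presents Lemma \ref{uniqueness} as an immediate consequence of Lemma \ref{stability}, obtained exactly as you do by taking $\gbis=g$ so that the stability estimate forces $\wt\pi\circ\immbis=\pi\circ\imm$ locally. Your additional continuation argument---using that $\imm$ is a local diffeomorphism between equidimensional spaces and that Minkowski isometries are affine, hence determined by their restriction to any nonempty open set---is a legitimate and welcome filling-in of the globalization step that the paper leaves implicit.
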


Now, we are in a position to establish a \emph{global} version of Lemma \ref{converse.local}. This completes the proof of Theorem~\ref{main-one} stated in the introduction.

\begin{lemma}\label{converse.global}
Suppose that $\MM$ is simply 
connected, $g$ is of class $W^{1,p}_\loc(\MM)$ with $p>n+1$, and $\Rm_g=0$. Then,
 there exists an isometric immersion $\imm:(\MM,g)\to(\r^{n+1},\gMink)$ of class $W^{2,p}_\loc(\MM)$. 
\end{lemma}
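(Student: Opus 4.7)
The plan is a standard monodromy / analytic continuation argument that lifts Lemma~\ref{converse.local} to a global statement, using the rigidity provided by Lemma~\ref{uniqueness} together with the hypothesis that $\MM$ is simply connected. First I would fix a base point $p_0\in\MM$, a connected and simply connected coordinate neighborhood $U_0\ni p_0$, and use Lemma~\ref{converse.local} to produce a local isometric immersion $\imm_0:U_0\to\Mink$ of class $W^{2,p}_\loc$. Given any point $p\in\MM$ and a continuous path $\gamma:[0,1]\to\MM$ from $p_0$ to $p$, compactness lets me cover $\gamma([0,1])$ by a finite chain of connected and simply connected chart domains $U_0,U_1,\dots,U_N$ such that each consecutive overlap $U_k\cap U_{k+1}$ is nonempty and connected and $p\in U_N$. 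Applying Lemma~\ref{converse.local} on each $U_k$ yields a local isometric immersion $\imm_k:U_k\to\Mink$. By Lemma~\ref{uniqueness}, on each overlap $U_k\cap U_{k+1}$ there is an isometry $\tau_k$ of the Minkowski spacetime with $\imm_{k+1}=\tau_k\circ\imm_k$; replacing $\imm_{k+1}$ by $\tau_k^{-1}\circ\imm_{k+1}$ and iterating on $k$, I obtain a single $W^{2,p}_\loc$ isometric immersion on $U_0\cup\cdots\cup U_N$ extending $\imm_0$, hence a candidate value at $p$.

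The crux of the argument is path-independence. Given two paths $\gamma_0,\gamma_1$ from $p_0$ to $p$, simple connectedness of $\MM$ provides a continuous homotopy $H:[0,1]\times[0,1]\to\MM$ with fixed endpoints; a uniform subdivision of the unit square fine enough that each closed sub-cell is mapped by $H$ into a single connected and simply connected chart reduces the question to a finite combinatorial check. On two adjacent sub-cells the two continuations differ by a single Minkowski isometry (Lemma~\ref{uniqueness}), which must be the identity because the continuations coincide near $p_0$; propagating this equality cell by cell along the homotopy shows that the value obtained at $p$ is independent of $\gamma$. Consequently $\imm:\MM\to\Mink$ is well-defined, and since each restriction to a chart coincides locally with one of the $\imm_k$, the map $\imm$ belongs to $W^{2,p}_\loc(\MM)$ and satisfies $\imm^*\gMink=g$ throughout $\MM$.

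The only real difficulty is bookkeeping: one must verify that at each step of the induction the gluing is consistent on the entire connected overlap (so that the isometry $\tau_k$ is uniquely determined), and that the subdivision of the homotopy can indeed be chosen so that each sub-cell lies in a simply connected chart where Lemma~\ref{uniqueness} applies. Neither issue is deep; the analytic content and the $W^{2,p}_\loc$ regularity are already packaged into the local existence result Lemma~\ref{converse.local} and the rigidity statement Lemma~\ref{uniqueness}, and regularity of $\imm$ follows since it is a local property.
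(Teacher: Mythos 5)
Your proposal is correct and follows essentially the same route as the paper: construct a chain of connected, simply connected chart domains along a path from the base point, glue the local immersions from Lemma~\ref{converse.local} using the rigidity of Lemma~\ref{uniqueness}, and invoke simple connectedness (via a homotopy subdivision) to get path-independence. The only cosmetic difference is that the paper does not require the consecutive overlaps $U_m\cap U_{m-1}$ to be connected but instead matches the immersions on the connected component of the overlap containing $\gamma(t_m)$, which is the safer formulation since Lemma~\ref{uniqueness} only fixes the isometry componentwise.
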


\begin{proof}
We can assume without losing in generality that the manifold $\MM$ is connected, otherwise it suffices to apply the 
forthcoming
 argument in each connected component of $\MM$. The idea is to patch together sequences of local isometric immersions $\imm_m:U_m\subset \MM \to \r^{n+1}$ constructed
  in Lemma \ref{converse.local}. The uniqueness result of Lemma \ref{uniqueness} allows us to choose $\imm_m$ in such a way that $\imm_i=\imm_j$ on the overlapping domain $U_i\cap U_j$. The simple-connectedness of $\MM$ insures that this definition is unambiguous (i.e., it does not depend on the choice of the local isometric immersions).  

To begin with, we fix a point $p_0\in \MM$ and a local chart $(\ph_0,U_0)$ at $p_0$, where $U_0\subset \MM$ is a connected and simply 
connected neighborhood of $p_0$ and $\ph_0:U_0\subset \MM\to \ph_0(U_0)\subset \r^{n+1}$. Then,
 Lemma \ref{converse.local} shows that there exists an isometric immersion $\imm_0:(U_0,g)\to (\r^{n+1},\gMink)$. 

Given any $p\in \MM$, we choose a path $\ga:[0,1]\to \MM$ joining $p_0$ to $p$ (i.e., a continuous function $\ga:[0,1]\to \MM$ with $\ga(0)=p_0$ and $\ga(1)=p$). Next we construct a division $\De:=\{t_0,t_1,t_2,...,t_K,t_{K+1}\}$ of the interval $[0,1]$, with $K\in \n$ and $0=t_0<t_1<...<t_K<t_{K+1}=1$, and a sequence of local charts $(\ph_m, U_m)_{m=1}^K$, with $\ga(t_m)\in U_m$ and $\ph_m:U_m\subset \MM \to \ph_m(U_m)\subset\r^{n+1}$, in such a way that $U_m$ is connected and simply 
connected and $\ga([t_{m},t_{m+1}])\subset U_{m}$ for all $m=0,1,2,...,K$. Let us prove that such a construction is possible. 

Consider the set $\mathcal A$ of all pairs $\big(\De, (\ph_m,
U_m)_{m=1}^K\big)$, with $K=|\De|-2$ (where $|\De|$ denotes the
cardinality of the set $\De$), that satisfies all the properties of the
above paragraph with the only difference that now $t_{K+1}\le 1$ only
(in other words, the last element of $\De$ need not be equal to
$1$). It is enough to prove that
$$
\sup_{\mathcal A} t_{K+1}=1=\max_{\mathcal A} t_{K+1}. 
$$
Obviously,
$\mathcal A$ is not empty and $\sup_{\mathcal A} t_{K+1}>0$.

Assume on the contrary that $s:=\sup_{\mathcal A} t_{K+1}<1$ and consider a local chart $(\ph,U)$ at $\gamma(s)$ such that $U$ is connected and simply 
connected. Since $\gamma$ is continuous and $U$ is a neighbourhood of $\gamma(s)$, there exists $0<\ep<\min\{s,1-s\}$ such that $\gamma([s-\ep,s+\ep])\subset U$. On the other hand, since $s$ is a supremum, there exists a pair $\big(\Delta, (\ph_m, U_m)_{m=1}^{L}\big)$, where $L=|\De|-2$, such that $t_{L+1}\ge s-\ep$ ($t_{L+1}$ being the last element of the set $\De$). It is easy to check that the pair $\big(\Delta',(\ph_m, U_m)_{m=1}^{L+1}\big)$, where 
$$
\De':=\De\cup \{t_{L+2}:=s+\ep\}, \qquad 
(\ph_{L+1}, U_{L+1}):=(\ph,U),
$$
 belongs to $\mathcal A$. Hence,
 $\sup_{\mathcal A} t_{K+1} \ge s+\ep$, which contradicts the definition of $s$. Therefore, $s=1$. 

In order to prove that $s=1$ is in fact a maximum (i.e., $\sup_{\mathcal A} t_{K+1}=\max_{\mathcal A} t_{K+1}$), we consider a local chart $(\ph,U)$ at $p=\gamma(1)$ and we repeat the argument above with the only difference that now $\ep$ is chosen such that $0<\ep<1$ and $\gamma([1-\ep,1])\subset U$.

With $\ga$, $\De$, and $(\ph_m, U_m)_{m=1}^K$ constructed as above, Lemmas \ref{converse.local} and \ref{uniqueness} allow us to successively choose isometric immersions 
$$
\imm_1:(U_1,g)\to (\r^{n+1},\gMink), ..., \imm_K:(U_K,g)\to (\r^{n+1},\gMink),
$$
 in such a way that $\imm_m=\imm_{m-1}$ on the connected component containing $\ga(t_m)$ of $U_m\cap U_{m-1}$, for all $m=1,2,...,K$. 

Finally, we define a mapping $\imm:(\MM,g)\to (\r^{n+1},\gMink)$ by $\imm(p):=\imm_K(p)$, where $\imm_K$ is defined as above. Indeed, one can see that this definition is independent on the choice of $\imm_K$ by using the simple-connectedness of the manifold $\MM$ (a similar argument was used in \cite{sor-lpN}). 
 Then, $\imm$ is clearly an immersion since this property is local.
\end{proof}

%=================================================================================================================================

\section{Immersion of a hypersurface with general signature} 
\label{hypersurface1}

We now turn our attention to hypersurfaces $\HH\subset \MM$ in a Lorentzian manifold $\MM$ of dimension $n+1$. The basic question we addressed here is whether such a hypersurface can be immersed in the Minkowski spacetime $(\r^{n+1},\gMink)$  by means of an immersion $\imm:\HH\to\r^{n+1}$ that preserves the geometry of the hypersurface. 
 
To begin with, we recall the corresponding results in Riemannian geometry. Let $\HH\subset \MM$ be a hypersurface in a Riemannian manifold $(\MM,g)$ of dimension $n+1$. Then, $\HH$ is endowed with first and second fundamental forms, which together characterize the geometry of the hypersurface. The first fundamental form $\gH$ is the pull-back of $g$ on $\HH$, while the second fundamental form $\KH$ is the pull-back of $\nabla \nH$ on $\HH$, where $\nH$ is a normal form to the hypersurface $\HH$ and $\nabla$ is the Levi-Civita connection induced by $g$ on $\MM$. Then, the question is whether there exists an immersion $\imm:\HH\to \e^{n+1}$, where $\e^{n+1}$ denotes the Euclidean space of dimension $n+1$, such that $\gH$ and $\KH$ are the first and second fundamental forms induced by $\imm$. In the classical setting (i.e., all data are smooth), Bonnet's theorem asserts that such an immersion exists locally if and only if the fundamental forms satisfy the Gauss and Codazzi equations. Subsequently, this theorem was generalized by Hartman \& Wintner \cite{hawi} to fundamental forms $(\gH, \KH)$ that are only in $\C^1\times\C^0$, 
and finally by S. Mardare \cite{sor-lp,sor-lpN} to the case 
$(\gH,\KH)\in W^{1,p}\times L^p$ with $p>n$, the latter regularity being optimal. 

If $(\MM,g)$ is now a Lorentzian manifold, the first fundamental form of a hypersurface $\HH\subset \MM$ 
need not provide useful information about the geometry of $\HH$: If the hypersurface is null, then its normal vector field is a null vector lying in the tangent bundle to the hypersurface and the first fundamental form is degenerate. For this reason, the first fundamental form is replaced in the Lorentzian setting with a connection $\naH$ on $\HH$, defined by projecting the Levi-Civita connection $\nabla$ (associated with the Lorentzian metric $g$)  along a prescribed 
vector field $\rig$ transversal to $\HH$. Such a vector field $\rig\in T\MM$ is called a {\sl rigging}
 and must satisfy 
$$
\rig_p\not\in T_p \HH, \qquad p\in \HH. 
$$
It is convenient to normalize $\rig$ by imposing $\langle \nH_p, \rig_p\rangle =1$ for all $p\in \HH$, where $\nH$ denotes as usual a normal form on $\HH$ chosen once and for all.
For a mathematical presentation of the notion of rigging, we refer to LeFloch and C. Mardare \cite{lfma}.

Given a hypersurface $\HH\subset \MM$ and a rigging $\rig\in T\MM$, the 
{\sl rigging projection on $\HH$} is denoted $X\in T\MM \mapsto \ov X \in T\HH$ and is defined by setting 
$$
X=\ov X + \la \nH,X\ra \rig. 
$$
Then the connection $\naH$ on $\HH$ is defined by 
$$
\naH_X Y =\ov{\nabla_X Y}, \quad  X,Y\in T\HH,
$$
and the second fundamental form $\KH$ of $\HH$ is defined by 
$$
\KH(X,Y)=\la \nabla_X \nH,Y\ra, \quad  X,Y\in T\HH.
$$

Beside the connection $\naH$ and the second fundamental form $\KH$ which in a sense characterize the geometry of 
the hypersurface $\HH$, we must introduce 
additional operators characterizing the rigging vector $\rig$. Guided by the decomposition
\begin{equation}
\label{l=...}
\nabla_X \rig=\ov{\nabla_X \rig} +\la \nH, \nabla_X \rig\ra \rig=\ov{\nabla_X \rig} -\la \nabla_X \nH, \rig\ra \rig, \quad  X\in T\HH, 
\end{equation}
we define the operators $\LH:T\HH\to T\HH$ and $\MH:T\HH\to\r$ by 
$$
\LH(X)=\ov{\nabla_X \rig}, \qquad  \MH(X)=\la \nabla_X \nH, \rig \ra, \quad X\in T\HH.
$$
We note that the operators $\naH,\KH, \LH, \MH$ can be also introduced via the decompositions: 
\begin{equation}
\label{decomp1}
\aligned
\nabla_X Y  & =\naH_X Y-\KH(X,Y)\rig, \quad  X,Y\in T\HH,\\
\nabla_X \rig & = \LH(X) - \MH(X)\rig, \quad  X\in T\HH.
\endaligned
\end{equation}

\begin{remark}
\label{rk2}
Define the operator $\mbox{ }^\sharp:\theta\in T^*\MM\to \theta^\sharp\in T\MM$ by 
$$
\la \theta, Y\ra=g(\theta^\sharp, Y), \quad Y\in T\MM.
$$ 
If the vector field $\rig=\nH^\sharp$ is transversal to $\HH$, then $\naH$ is the Levi-Civita connection induced by the first fundamental form of $\HH$,  
$\LH(X)=\KH(X,\cdot)^\sharp$, and $\MH=0$. Thus the operators $\naH,\KH, \LH, \MH$ associated to a pair $(\HH,\rig)$ are defined in terms of the fundamental forms associated with the pair $(\HH,\nH^\sharp)$; cf. also 
Section \ref{hypersurface2} below.
\end{remark}

Our principal objective in this section is to prove that the operators $\naH, \KH, \LH, \MH$ characterize the 
pair $(\HH,\rig)$ 
formed by the hypersurface and the rigging vector field. 
We are going to generalize Bonnet's theorem in the Lorentzian setting to 
a pair $(\HH,\rig)$. Let $\Mink=(\r^{n+1},\gMink)$ be the Minkowski spacetime of dimension $n+1$ and let $\naMink$ be the Levi-Civita connection induced by $\gMink$. If $\imm:\HH\to \r^{n+1}$ is an immersion and $\rigMink:\HH\to T\r^{n+1}$ is a rigging along $\HH':=\imm(\HH)$ (this definition makes sense since $\HH'$ is locally a hypersurface of $\r^{n+1}$), then we define
 the operators $\naHMink, \KHMink, \LHMink, \MHMink$ on $\HH'$ via the decompositions (similar to \eqref{decomp1}):
\begin{equation}
\label{decomp2}
\aligned
\naMink_{X} Y   & =\naHMink_{X} Y - \KHMink(X,Y)\rigMink,\quad X,Y\in T\HH',
\\
\naMink_{X} \rigMink & =\LHMink(X)  - \MHMink(X)\rigMink,\quad X\in T\HH'.
\endaligned
\end{equation}

We say that an immersion $\imm:\HH\to\r^{n+1}$ and a rigging $\rigMink:\HH\to T\r^{n+1}$ along
 $\HH'=\imm(\HH)$ {\sl preserve the operators}  $\naH,\KH, \LH, \MH$ if, for all $X,Y\in T\HH$,
$$
\aligned
& \imm_*(\naH_X Y)=\naHMink_{\imm_*X}\imm_* Y, \quad \imm_*(\LH(X))=\LHMink(\imm_* X),\\
& \KH(X,Y)=\KHMink(\imm_*X,\imm_* Y), \quad \MH(X)=\MHMink(\imm_* X).
\endaligned
$$
Equivalently, this means that 
$$
\aligned
\naMink_{\imm_*X} \imm_*Y   & =\imm_*(\naH_{X} Y) - \KH(X,Y)\rigMink, \quad X,Y\in T\HH,
\\
\naMink_{\imm_*X} \rigMink & =\imm_*(\LH(X))  - \MH(X)\rigMink, \quad X\in T\HH.
\endaligned
$$
Throughout this section, we assume that $\imm\in W^{2,p}_\loc(\HH)$ and $\rig\in W^{1,p}_\loc(\HH)$ with $p>n$; the corresponding operators $\naH,\KH, \LH, \MH$ belong to the space $L^p_\loc(\HH)$. This regularity is sharp in the sense that the forthcoming 
results do not hold under a lower regularity. 

We will show that such an immersion exists locally if and only if the operators $\naH,\KH, \LH, \MH$ satisfy  generalized Gauss and Codazzi equations, defined as follows. Let $\ph:U\subset \HH\to \Om\subset \r^{n}$ be a local chart, let $x^i$, $i=1,2,...,n$, be a set of Cartesian coordinates in $\Om$, and let $\d_i$ be the vector field tangent to the coordinate line $x^i$. Then, $\{\d_1,...,\d_n\}$ form a basis of the tangent space
 $T\HH$ to the hypersurface $\HH\subset \MM$, while $\{\rig,\d_1,...,\d_n\}$ form a basis of the tangent 
 space to $\MM$ thanks to the definition of the rigging $\rig$. We denote $R_{\ hij}^k$ the components of the Riemann curvature tensor field associated with the connection $\naH$ and $K_{ij}$, $L_j^k$, $M_i$ the components 
 of the operators $\KH$, $\LH$, $\MH$, respectively. 

\begin{lemma}
\label{direct+}
If the immersion $\imm:\HH\to\Mink$ is of class $W^{2,p}_\loc$ and preserves the operators  $\naH,\KH, \LH, \MH$, then the following generalized Gauss and Codazzi equations are satisfied in the distributional sense: 
\begin{equation}
\label{gc}
\aligned
R_{\ hij}^k+K_{ih}L_j^k-K_{jh}L_i^k &=0 && \text{(Gauss)}, 
\\
\naH_i K_{jh}-\naH_j K_{ih}-K_{jh}M_i+K_{ih}M_j &=0 && \text{(Codazzi-1),}
\\
\naH_i L_j^k-\naH_j L_i^k -L_i^k M_j + L_j^k M_i &=0 && \text{(Codazzi-2),}
\\
\naH_i M_j-\naH_j M_i -K_{jh}L_i^h+K_{ih}L_j^h &=0 && \text{(Codazzi-3)}.
\endaligned
\end{equation}
\end{lemma}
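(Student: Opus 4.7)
The plan is to exploit the fact that the Levi-Civita connection $\naMink$ of the Minkowski spacetime is \emph{flat}, i.e., for all sufficiently smooth vector fields $X,Y,Z$ on $\r^{n+1}$,
$$
\naMink_X \naMink_Y Z - \naMink_Y \naMink_X Z - \naMink_{[X,Y]} Z = 0.
$$
Working in a local chart $\ph:U\subset\HH\to\Om\subset\r^n$ with coordinate vector fields $\d_1,\ldots,\d_n$, the vectors $\imm_*\d_i$ commute pairwise (since $[\d_i,\d_j]=0$ and $\imm_*$ is a Lie algebra morphism restricted to coordinate fields). Hence the flatness identity reduces to
$$
\naMink_{\imm_*\d_i}\naMink_{\imm_*\d_j} W = \naMink_{\imm_*\d_j}\naMink_{\imm_*\d_i} W
$$
for $W=\imm_*\d_h$ and for $W=\rigMink$, and the equations \eqref{gc} will be extracted by expanding both sides using the preservation relations \eqref{decomp2} and equating the components along the basis $\{\imm_*\d_1,\ldots,\imm_*\d_n,\rigMink\}$.

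The first step is to apply this to $W=\imm_*\d_h$. Write $\naH_{\d_i}\d_h=\Ga_{ih}^k\d_k$, so that $\naMink_{\imm_*\d_i}\imm_*\d_h=\Ga_{ih}^k\imm_*\d_k-K_{ih}\rigMink$ by the first preservation relation. Applying $\naMink_{\imm_*\d_j}$ and using both preservation relations to expand $\naMink_{\imm_*\d_j}\imm_*\d_k$ and $\naMink_{\imm_*\d_j}\rigMink$, one obtains an expression whose $\imm_*\d_k$-coefficient is $\d_j\Ga_{ih}^k+\Ga_{ih}^l\Ga_{jl}^k-K_{ih}L_j^k$ and whose $\rigMink$-coefficient is $-\d_j K_{ih}-\Ga_{ih}^k K_{jk}+K_{ih}M_j$. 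Antisymmetrizing in $(i,j)$, the tangent component yields
$$
\bigl(\d_j\Ga_{ih}^k-\d_i\Ga_{jh}^k+\Ga_{ih}^l\Ga_{jl}^k-\Ga_{jh}^l\Ga_{il}^k\bigr)-K_{ih}L_j^k+K_{jh}L_i^k=0,
$$
which is precisely $R_{\ hij}^k+K_{ih}L_j^k-K_{jh}L_i^k=0$ (the Gauss equation), while the transverse component gives, after recognizing that $\naH_i K_{jh}-\naH_j K_{ih}=\d_i K_{jh}-\d_j K_{ih}-\Ga_{ih}^k K_{jk}+\Ga_{jh}^k K_{ik}$ (the $\Ga^k_{ij}K_{kh}$ terms cancel by symmetry), the Codazzi-1 equation.

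The second step is completely parallel with $W=\rigMink$. Using $\naMink_{\imm_*\d_i}\rigMink=L_i^k\imm_*\d_k-M_i\rigMink$, applying $\naMink_{\imm_*\d_j}$, expanding, and antisymmetrizing in $(i,j)$, the tangent $\imm_*\d_k$-component becomes
$$
\d_j L_i^k-\d_i L_j^k+L_i^l\Ga_{jl}^k-L_j^l\Ga_{il}^k-M_i L_j^k+M_j L_i^k=0,
$$
which after introducing the covariant derivative of the $(1,1)$-tensor $L$ (again the $\Ga^l_{ij}$ terms cancel by symmetry of $\Ga$) is exactly Codazzi-2, while the $\rigMink$-component simplifies to Codazzi-3.

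The only delicate point is the distributional meaning of these identities under the stated regularity: since $p>n$ and $\HH$ has dimension $n$, the space $W^{1,p}_\loc$ is an algebra and embeds continuously in $C^0$, so $\imm_*\d_i$ and $\rigMink$ are continuous while their derivatives, together with the operators $\naH,\KH,\LH,\MH$, lie in $L^p_\loc$. Consequently every product appearing in the expansion of $\naMink_{\imm_*\d_j}\naMink_{\imm_*\d_i}W$ is a product of a $W^{1,p}_\loc$ function by an $L^p_\loc$ function (or a distributional derivative of such), so the identity makes sense in $\Dcal'$ and may be tested against smooth compactly supported functions exactly as in the proof of Lemma~\ref{direct}. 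I expect this regularity bookkeeping, rather than the algebraic manipulation itself, to be the only substantive obstacle.
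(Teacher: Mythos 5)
Your proposal is correct and follows essentially the same route as the paper: the paper writes the preservation relations in a Cartesian chart (where $\naMink$ reduces to ordinary partial differentiation), invokes the distributional commutation of second derivatives of $\frac{\d f}{\d x^h}$ and of $\rigMink$, and extracts the four equations by decomposing along the basis $\{\rigMink,\frac{\d f}{\d x^1},\ldots,\frac{\d f}{\d x^n}\}$ — exactly your flatness-plus-commuting-coordinate-fields argument. Your handling of the distributional sense by referring back to the technique of Lemma~\ref{direct} also matches what the paper does.
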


\begin{proof} 
Let $\Ga_{ij}^k$ denote the Christoffel symbols associated with the connection $\naH$, so that
$$
\naH_{\d_i}{\d_j}=\Ga_{ij}^k \d_k. 
$$
Then, the definition of the operators $\naH,\KH, \LH, \MH$ shows that 
$$
\aligned 
\nabla_{\d_i}{\d_h} & = \Ga_{ih}^k \d_k - K_{ih}\rig,
\\
\nabla_{\d_i} \rig    & = L_i^k \d_k - M_i \rig,
\endaligned
$$
and the assumption that the immersion $\imm$ and rigging $\rigMink$ preserve the operators  $\naH,\KH, \LH, \MH$ shows that the function $f:=\imm\circ\ph^{-1}:\Om\to\r^{n+1}$ satisfies
$$
\aligned 
\frac{\d }{\d x^i}\Big( \frac{\d f}{\d x^h} \Big)& = \Ga_{ih}^k \frac{\d f}{\d x^k}  - K_{ih}\rigMink, \\
\frac{\d \rigMink }{\d x^i}  & = L_i^k \frac{\d f}{\d x^k}  - M_i \rigMink .
\endaligned
$$

Since the second derivatives of $\frac{\d f}{\d x^h}$ and $\rigMink$ commute, the above relations imply that 
$$
\aligned 
\frac{\d }{\d x^j}\Big( \Ga_{ih}^k \frac{\d f}{\d x^k}  - K_{ih}\rigMink  \Big)& = \frac{\d }{\d x^i}\Big( \Ga_{jh}^k \frac{\d f}{\d x^k}  - K_{jh}\rigMink  \Big),\\
\frac{\d }{\d x^j} \Big( L_i^k \frac{\d f}{\d x^k}  - M_i \rigMink \Big)   & =  \frac{\d }{\d x^i} \Big( L_j^k \frac{\d f}{\d x^k}  - M_j \rigMink \Big). 
\endaligned
$$
Hence, we find (the relations below should be understood in the distributional sense, against test functions in $\D(\Om)$, as in the proof of Lemma \ref{direct})
\begin{multline*}
\frac{\d \Ga_{ih}^k}{\d x^j}  \frac{\d f}{\d x^k}  - \frac{\d K_{ih}}{\d x^j}\rigMink  
+ \Ga_{ih}^k\Big( \Ga_{jh}^k \frac{\d f}{\d x^k}  - K_{jh}\rigMink  \Big) -K_{ih} \Big( L_j^k \frac{\d f}{\d x^k}  - M_j \rigMink \Big)\\
=\frac{\d \Ga_{jh}^k}{\d x^i}  \frac{\d f}{\d x^k}  - \frac{\d K_{jh}}{\d x^i}\rigMink  
+ \Ga_{jh}^k\Big( \Ga_{ih}^k \frac{\d f}{\d x^k}  - K_{ih}\rigMink  \Big) -K_{jh} \Big( L_i^k \frac{\d f}{\d x^k}  - M_i \rigMink \Big)
\end{multline*}
and 
\begin{multline*}
\frac{\d L_i^k}{\d x^j} \frac{\d f}{\d x^k}  - \frac{\d M_i}{\d x^j}  \rigMink 
+L_i^k\Big( \Ga_{jh}^k \frac{\d f}{\d x^k}  - K_{jh}\rigMink  \Big) - M_i \Big( L_j^k \frac{\d f}{\d x^k}  - M_j \rigMink \Big)\\
=
\frac{\d L_j^k}{\d x^i} \frac{\d f}{\d x^k}  - \frac{\d M_j}{\d x^i}  \rigMink 
+L_j^k\Big( \Ga_{ih}^k \frac{\d f}{\d x^k}  - K_{ih}\rigMink  \Big) - M_j \Big( L_i^k \frac{\d f}{\d x^k}  - M_i \rigMink \Big).
\end{multline*}

Using the fact that $\left\{\rigMink, \frac{\d f}{\d x^1},...,\frac{\d f}{\d x^n}\right\}$ 
is a basis in the tangent space $T\r^{n+1}$, it is easily seen that the last two equations are equivalent 
to the generalized Gauss and Codazzi equations of the lemma.
\end{proof}

We showed in the previous lemma that the generalized Gauss and Codazzi equations are necessary for the existence of an immersion $\imm$ and rigging $\rigMink$ preserving the operators $\naH,\KH, \LH, \MH$. We now show that these equations are also 
sufficient, at least as far as the local existence of $\imm$ and $\rigMink$ is concerned.

\begin{lemma}
\label{converse.local+} 
Suppose that $\naH,\KH, \LH, \MH$ are of class $L^p_\loc$, $p>n$, and satisfy the generalized Gauss and Codazzi equations \eqref{gc}. Consider any connected, simply 
connected, open subset $U\subset \HH$ that can de described by a single local chart. 
Then, there exists an immersion $\imm:U\to\Mink$ and a rigging $\rigMink:U\to T\Mink$, respectively of class $W^{2,p}_\loc(U)$ and $W^{1,p}_\loc(\imm(U))$, that preserve the operators $\naH,\KH, \LH, \MH$. 
Moreover, if $\si$ is an affine bijection of the Minkowski spacetime $\Mink$, then $\si\circ\imm$ and $\si_*\rigMink$ also satisfy this property.
\end{lemma}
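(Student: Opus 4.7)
The plan is to recast the existence of $(\imm,\rigMink)$ as a Pfaff-type system of the kind covered by Theorem~\ref{pfaff}, with the generalized Gauss--Codazzi equations \eqref{gc} playing the role of the associated compatibility conditions. Let $\ph:U\to\Om\subset\r^n$ be a local chart with $\Om$ connected and simply connected, set $f:=\imm\circ\ph^{-1}$, and abuse notation by writing $\rigMink$ for its expression in the chart. Arguing as in the proof of Lemma~\ref{direct+} but in the reverse direction, the pair $(\imm,\rigMink)$ preserves the operators $\naH,\KH,\LH,\MH$ if and only if
\begin{align*}
\frac{\d}{\d x^i}\Big(\frac{\d f}{\d x^h}\Big)&=\Ga_{ih}^k\,\frac{\d f}{\d x^k}-K_{ih}\,\rigMink,\\
\frac{\d\rigMink}{\d x^i}&=L_i^k\,\frac{\d f}{\d x^k}-M_i\,\rigMink,
\end{align*}
in $\Dcal'(\Om)$, where $\Ga_{ih}^k$ denote the Christoffel symbols of $\naH$.

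Introducing the $(n+1)\times(n+1)$ matrix field $F$ whose first $n$ columns are the vector fields $\frac{\d f}{\d x^1},\dots,\frac{\d f}{\d x^n}$ and whose last column is $\rigMink$, the above is equivalent to the single first-order Pfaff system $\frac{\d F}{\d x^i}=FA_i$, where the $L^p_\loc$ coefficient matrices $A_i$ are assembled block-wise out of $\Ga_{ih}^k$, $-K_{ih}$, $L_i^k$ and $-M_i$. I would fix any $x_\star\in\Om$ and any invertible $F_\star\in\r^{(n+1)\times(n+1)}$, and verify that the compatibility relations $\frac{\d A_i}{\d x^j}-\frac{\d A_j}{\d x^i}=A_iA_j-A_jA_i$ required by Theorem~\ref{pfaff} split, in this block form, into exactly the four identities comprising \eqref{gc}; this amounts to reversing, column by column, the computation already carried out in the proof of Lemma~\ref{direct+}. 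Theorem~\ref{pfaff} then furnishes $F\in W^{1,p}_\loc(\Om;\r^{(n+1)\times(n+1)})$, and the scalar linear equation $\frac{\d}{\d x^i}\det F=(\mathrm{tr}\,A_i)\det F$ propagates the invertibility of $F_\star$ to all of $\Om$.

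It remains to extract $f$ and $\rigMink$ from $F$. Denoting by $F_h$ the $h$-th column of $F$, the symmetry $\Ga_{ih}^k=\Ga_{hi}^k$ (equivalent to torsion-freeness of $\naH$) and the symmetry $K_{ih}=K_{hi}$, both inherited from the decomposition \eqref{decomp1} of the torsion-free Levi-Civita connection, yield $\frac{\d F_h}{\d x^i}=\frac{\d F_i}{\d x^h}$ in $\Dcal'(\Om)$ for $1\le i,h\le n$. A Poincar\'e lemma argument of the type invoked in the proof of Lemma~\ref{converse.local} then produces $f\in W^{2,p}_\loc(\Om;\r^{n+1})$ such that $\frac{\d f}{\d x^h}=F_h$, and I set $\rigMink:=F_{n+1}\in W^{1,p}_\loc(\Om;\r^{n+1})$. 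Invertibility of $F$ throughout $\Om$ gives linear independence of its columns, which means precisely that $f$ is an immersion and that $\rigMink$ is transversal to $f(\Om)$; by construction the displayed system holds, hence so do the intrinsic preservation identities for $\naH,\KH,\LH,\MH$.

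For the final assertion, any affine bijection $\si:y\mapsto v+Qy$ of $\Mink$ turns $f$ into $v+Qf$ and $\rigMink$ into $Q\rigMink$, which is equivalent to left-multiplying $F$ by the constant matrix $Q$; since the Levi-Civita connection $\naMink$ is, in Cartesian coordinates, componentwise partial differentiation, the system $\frac{\d F}{\d x^i}=FA_i$ is invariant under this left action, so $\si\circ\imm$ and $\si_*\rigMink$ again preserve the same operators. The step I expect to require the most care is the block-by-block identification of the Pfaff compatibility relations with the four equations in \eqref{gc}: since $\Ga_{ih}^k,K_{ih},L_i^k,M_i$ belong to $L^p_\loc$ with $p>n$ only, every identity must be read distributionally and the quadratic terms in $A_iA_j$ must be handled via the Sobolev multiplication properties available at this regularity; the bookkeeping is routine but is the only nontrivial point.
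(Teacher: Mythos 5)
Your proposal is correct and follows essentially the same route as the paper: reduce to the Pfaff system $\frac{\d F}{\d x^i}=FA_i$ with the block matrices $A_i$ built from $\Ga_{ih}^k$, $-K_{ih}$, $L_i^k$, $-M_i$, identify the compatibility conditions of Theorem~\ref{pfaff} with \eqref{gc}, then integrate the first $n$ columns via a Poincar\'e system using the symmetries $\Ga_{ih}^k=\Ga_{hi}^k$ and $K_{ih}=K_{hi}$, and read off $\rigMink$ as the last column. The only cosmetic difference is that you propagate invertibility of $F$ through the identity $\frac{\d}{\d x^i}\det F=(\mathrm{tr}\,A_i)\det F$, whereas the paper invokes \cite[Lemma 6.1]{sor-lpN}, which encapsulates exactly that argument at the $L^p$ regularity in question.
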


\begin{proof}
Let $\ph:U\subset \HH \to \Om\subset \r^n$ be any local chart so that $\Om:=\ph(U)$ is connected and simply 
connected. Let $\Ga_{ij}^k$, $K_{ij}$, $L_j^k$, $M_i$ denote the components of the operators $\naH$, $\KH$, $\LH$, $\MH$ with respect to this chart and note that all these components belong to the space $L^p_\loc(\Om)$. 

Finding an immersion $\imm:U\to\Mink$ and a rigging $\rigMink: U\to T\Mink$ that preserve the operators  $\naH,\KH, \LH, \MH$ reduces to finding an immersion $f:\Om\to \r^{n+1}$ and a rigging $\rigMink:\Om\to T\r^{n+1}$ to the hypersurface $f(\Om)\subset\r^{n+1}$ such that 
\begin{equation}
\label{eq2+}
\aligned 
\frac{\d }{\d x^i}\Big( \frac{\d f}{\d x^h} \Big)& = \Ga_{ih}^k \frac{\d f}{\d x^k}  - K_{ih}\rigMink \\
\frac{\d \rigMink}{\d x^i} & = L_i^k \frac{\d f}{\d x^k}  - M_i \rigMink .
\endaligned
\end{equation}
This will be done in two stages. First, we solve the Pfaff system 
\begin{equation}\label{1+}
\frac{\d F}{\d x^i} = F \,A_i \quad \text{ a.e. in } \Om, \qquad 
A_i:=\mat{\Ga_{ih}^k}{L_i^k}{-K_{ih}}{-M_i},
\end{equation}
where in the definition of the matrices $A_i$ the row index is $k$ and the column index is $h$. 
In view of Theorem~\ref{pfaff}, 
this system has a solution $F\in W^{1,p}_\loc(\Om;\r^{(n+1)\times (n+1)})$, since the compatibility conditions 
$$
\frac{\d A_j}{\d x^i}-\frac{\d A_i}{\d x^j}=A_j A_i - A_i A_j  \qquad \text{ in } \D'(\Om,\r^{(n+1)\times (n+1)})
$$
are precisely 
equivalent to the Gauss and Codazzi equations \eqref{gc}. Moreover, the solution to this system is unique provided we impose an initial condition, say $F(x_\star)=F_\star\in \r^{(n+1)\times (n+1)}$ for some $x_\star\in\Om$ ($F_\star$ will be chosen later). 

Then, we solve the Poincar\'e system 
\begin{equation}
\label{1bis+}
\frac{\d f}{\d x^i} = F_i  \qquad \text{ in } \Om, 
\end{equation}
where $F_i$ is the $i$-th column vector of the matrix field $F$ that satisfies the system \eqref{1+}. This Poincar\'e system has a solution $f\in W^{2,p}_\loc(\Om,\r^{n+1})$ since  the compatibility conditions
$$
\frac{\d F_j}{\d x^i}= \frac{\d F_i}{\d x^j} 
$$
are satisfied. Indeed, they are equivalent to the equations $\Ga_{ij}^k=\Ga_{ji}^k$ and $K_{ij}=K_{ji}$ in view of 
equation \eqref{1+}; or the Christoffel symbols and the covariant components of the second fundamental form clearly satisfy these symmetry properties. 

To conclude the proof, we remark that $f$ is an immersion and the vector field $\rigMink:=F_{n+1}$ (that is, $\rigMink$ is the $(n+1)$-th column vector of the matrix field $F$ that satisfies the system \eqref{1+}) is transversal to $f(\Om)$ provided we choose the matrix $F_\star$ to be invertible. This is a consequence of the fact that the solution $F$  of the Pfaff system \eqref{1+} in invertible at every $x\in \Om$ if and only if $F$ is invertible at one single point; see S. Mardare \cite[Lemma 6.1]{sor-lpN}. The desired immersion is then defined by $\imm:=f\circ\ph:U\to\Mink$. 

Finally, if $\si$ is any affine bijection of the Minkowski spacetime $(\r^{n+1},\gMink)$, it is easily checked (in view of Equations \eqref{1+}) that $\si\circ\imm$ and $\si_*\rigMink$ also satisfy the conclusions of the lemma. 
\end{proof}

Lemma \ref{converse.local+} is a local existence result, in the sense that the immersion $\imm$ and rigging $\rigMink$ are  defined only on a subset $U\subset \HH$. But we have a good control of the size  of this neighborhood, since $U$ only needs to be connected, simply 
connected, and defined by a single local chart. As already remarked in the previous section, this will allow us to establish a global existence result when $\HH$ is simply 
connected. (See Lemma~\ref{converse.global} below.) 

We next prove that the immersion $\imm:\HH\to\Mink$ and rigging $\rigMink:\HH\to T\Mink$ defined by Lemma \ref{converse.local+} depend continuously (up to an affine bijection of the Minkowski space) on the operators $\naH,\KH, \LH,\MH$ . Note that if $\sigma:\Mink\to\Mink$ is an affine bijection of the Minkowski space, then $(\imm,\rigMink)$ and $(\sigma\circ\imm,\sigma_*\rigMink)$ share the same operators $\naH,\KH, \LH,\MH$. 

\begin{lemma}
\label{stability+}
Let $\naH,\KH, \LH,\MH$ and $\naHbis,\KHbis, \LHbis, \MHbis$ denote the operators induced on the hypersurface $\HH\subset \MM$ by the immersions $\imm,\immbis:\HH\to \Mink$ and riggings $\rigMink,\rigMinkbis:\HH\to T\Mink$, respectively. For any connected smooth open set $\Acal\Subset \MM$ and any $\ep>0$ such that 
$$
\max\big(\|(\naH,\KH,\LH,\MH)\|_{L^p(\Acal)}  , 
\|(\naHbis,\KHbis,\LHbis, \MHbis)\|_{L^p(\Acal)} \big)
\leq \frac1\ep, 
$$
there exists a constant $C=C(\ep,\Acal)$ such that, for some affine bijection $\sigma:\Mink\to\Mink$ of the Minkowski space, 
$$
\|\immbis-\sigma\circ\imm\|_{W^{2,p}(\Acal)}+\|\rigMinkbis -\sigma_*\rigMink\|_{W^{1,p}(\Acal)} 
\leq C \, \Big( \|(\naHbis,\KHbis,\LHbis, \MHbis) - (\naH,\KH,\LH,\MH)\|_{L^p(\Acal)}  \Big).
$$  
\end{lemma}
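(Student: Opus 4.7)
The plan is to mimic the approach of Lemma \ref{stability} (the metric case), adapting it to the hypersurface-with-rigging setting through the matrix-field formulation employed in the proof of Lemma \ref{converse.local+}. I would first work in a local chart $(\ph, U)$ with $\Om := \ph(U)$ connected and with Lipschitz boundary. Setting $f := \imm \circ \ph^{-1}$ and $\wt f := \immbis \circ \ph^{-1}$, I introduce the matrix fields
$$
F := \big(\partial_1 f \mid \cdots \mid \partial_n f \mid \rigMink\big), \qquad \wt F := \big(\partial_1 \wt f \mid \cdots \mid \partial_n \wt f \mid \rigMinkbis\big),
$$
which are pointwise invertible by the transversality of the riggings and which, as in the proof of Lemma \ref{converse.local+}, satisfy the Pfaff systems $\partial_i F = F A_i$ and $\partial_i \wt F = \wt F \wt A_i$ with block matrices $A_i, \wt A_i$ built from the respective operator tuples via \eqref{1+}. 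The $L^p(\Om)$-norm of $A_i - \wt A_i$ is controlled by $\|(\naHbis, \KHbis, \LHbis, \MHbis) - (\naH, \KH, \LH, \MH)\|_{L^p(\Om)}$.

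I would then fix a base point $x_\star \in \Om$ and choose the affine bijection $\sigma(y) := v + Sy$ with linear part $S := \wt F(x_\star) F(x_\star)^{-1}$ and translation $v := \immbis(x_\star) - S\, \imm(x_\star)$. This ensures $(\sigma \circ \imm)(x_\star) = \immbis(x_\star)$, and the matrix field $G := SF$ corresponding to the pair $(\sigma \circ \imm, \sigma_* \rigMink)$ satisfies the same Pfaff system as $F$ (left multiplication by a constant matrix preserves the system) and agrees with $\wt F$ at $x_\star$. Applying the stability result for Pfaff systems stated in Section \ref{prel} to $(G, \wt F)$ with matching initial values yields
$$
\|G - \wt F\|_{W^{1,p}(\Om)} \leq C \sum_i \|A_i - \wt A_i\|_{L^p(\Om)}.
$$
Since $(\sigma \circ \imm - \immbis)(x_\star) = 0$, the Poincar\'e--Wirtinger inequality upgrades this to the local estimate on $\|\sigma \circ \imm - \immbis\|_{W^{2,p}(\Om)} + \|\sigma_* \rigMink - \rigMinkbis\|_{W^{1,p}(\Om)}$ in terms of the $L^p$-norm of the difference of the operator tuples. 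To pass from a single chart to all of $\Acal$, I would cover $\overline{\Acal}$ by finitely many such charts (possible since $\Acal \Subset \HH$) and patch the local estimates, relying on the uniqueness up to an affine bijection inherent in Lemma \ref{converse.local+} to ensure that a single $\sigma$ can be chosen consistently across overlaps, in the spirit of the proof of Lemma \ref{converse.global}.

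The main obstacle is the uniformity of the Pfaff stability constant with respect to the initial datum $|G(x_\star)| = |\wt F(x_\star)|$, which encodes the sizes of the gradient of $\immbis$ and of the rigging $\rigMinkbis$ at $x_\star$ and is \emph{not} bounded by the hypothesis on the operators alone. I would handle this by first normalizing both $\imm$ and $\immbis$ through preliminary affine bijections $\tau, \wt\tau$ chosen so that the associated matrix fields equal the identity at $x_\star$; in this normalized picture, the Pfaff stability constant depends only on $\ep$, and the affine bijection $\sigma$ of the statement is then recovered by composition as $\sigma = \wt\tau^{-1} \circ \tau$.
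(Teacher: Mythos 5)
Your first three paragraphs reproduce the paper's own argument essentially step for step: the same matrix fields $F,\wt F$ built from $(\d_i f,\rigMink)$ and $(\d_i\wt f,\rigMinkbis)$, the same Pfaff systems with coefficients $A_i,\wt A_i$ as in \eqref{1+}, the same choice $S=\wt F(x_\star)F(x_\star)^{-1}$ for the linear part of $\sigma$ (so that $SF$ and $\wt F$ agree at $x_\star$), the same appeal to the Pfaff stability estimate of Section~\ref{prel} followed by Poincar\'e--Wirtinger, and the same passage from a single chart to $\Acal$ using connectedness of $\Acal$ and compactness of $\ov{\Acal}$. On this core there is nothing to object to.

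Your last paragraph flags an issue that the paper's proof passes over in silence: the Pfaff stability constant $C_0(\ep)$ requires a bound on the initial datum $|Y^0|=|\wt F(x_\star)|$, which is \emph{not} controlled by the $L^p$ bound on the operators (scaling $\immbis\mapsto\lam\immbis$, $\rigMinkbis\mapsto\lam\rigMinkbis$ leaves the operators unchanged but makes $|\wt F(x_\star)|$ arbitrarily large). Your diagnosis is correct, but the proposed cure does not deliver the conclusion in the stated one-sided form. After normalizing by $\tau,\wt\tau$ so that both matrix fields equal $I$ at $x_\star$, you indeed get $\|\wt\tau\circ\immbis-\tau\circ\imm\|_{W^{2,p}}\leq C(\ep,\Acal)\,\|\cdots\|_{L^p}$ with a uniform constant; but to pass to $\|\immbis-\sigma\circ\imm\|$ with $\sigma=\wt\tau^{-1}\circ\tau$ you must compose with $\wt\tau^{-1}$, whose linear part is $\wt F(x_\star)$, and this reintroduces exactly the uncontrolled factor $|\wt F(x_\star)|$ you set out to remove. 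The normalization therefore yields a genuinely uniform constant only if the conclusion is restated in the symmetric form $\|\wt\sigma\circ\immbis-\sigma\circ\imm\|+\|\wt\sigma_*\rigMinkbis-\sigma_*\rigMink\|\leq C(\ep,\Acal)\,\|\cdots\|$ (the shape used in Theorems~\ref{main-one} and \ref{main-three}); for the one-sided form of Lemma~\ref{stability+} one must accept, as the paper tacitly does, that $C$ also depends on the given data through $|\wt F(x_\star)|$. You should make this choice explicit rather than assert that $\sigma=\wt\tau^{-1}\circ\tau$ ``recovers'' the statement.
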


\begin{proof}
Let $U\Subset \MM$ be a connected smooth open set for which there exists a local chart $\ph:U\subset \HH\to \Om\subset\r^{n}$. Let $f:=\imm\circ\ph^{-1}$ and $\wt f:=\immbis\circ\ph^{-1}$. Define the matrix fields $F,\wt F:\Om\to \r^{(n+1)\times(n+1)}$ whose columns are respectively the vector fields $\frac{\d f}{\d x^1},...,\frac{\d f}{\d x^n},\rigMink$ and $\frac{\d \wt f}{\d x^1},...,\frac{\d \wt f}{\d x^n},\rigMinkbis$. As in the proof of Lemma~\ref{converse.local+}, we then have  
$$
\frac{\d F}{\d x^i} = F \,A_i \quad \text{ a.e. in } \Om, \qquad 
A_i:=\mat{\Ga_{ih}^k}{L_i^k}{-K_{ih}}{-M_i} \, ,
$$
and 
$$
\frac{\d \wt F}{\d x^i} = \wt F \,\wt A_i \quad 
\text{ a.e. in } \Om, \qquad 
\wt A_i:=\mat{\wt\Ga_{ih}^k}{\LHbis_i^k}{-\KHbis_{ih}}{-\MHbis_i} \, .
$$

Let $x_\star\in \Om$. Since the matrices $F(x_*)$ and  $\wt F(x_*)$ are invertible, the matrix $Q:=\wt F(x_*) (F(x_*))^{-1}\in \r^{(n+1)\times (n+1)}$ is well defined and is also invertible. Then the matrix fields $(QF)$ and $\wt F$ satisfy $(QF)(x^*)=\wt F(x_*)$ and 
$$
\frac{\d (QF)}{\d x^i} = (QF) \,A_i  \ \text{ and } \ \frac{\d \wt F}{\d x^i} = \wt F \,\wt A_i \quad 
\text{ a.e. in } \Om. 
$$
Then, the stability property for Pfaff systems stated in Section~\ref{prel} shows that there exists a constant $C=C(\ep,\Om)$ such that 
$$
\|\wt F -  QF\|_{W^{1,p}(\Om)} \leq C \, 
\sum_i \| \wt A_i- A_i\|_{L^p(\Om)}.
$$
This inequality in turn implies (thanks to Poincar\'e-Wirtinger's inequality) that there exists a vector $v\in\r^{n+1}$ such that 
$$
\| \wt f- (v+Qf)\|_{W^{2,p}(\Om)} \leq C \,  \|d \wt f- Q (d f)\|_{W^{1,p}(\Om)} .
$$
Noting that $\wt F= [(d \wt f)\  \rigMinkbis]$ and $QF=[Q(df) \ Q\rigMink]$ (the notation $[...]$ designates the matrix obtained by adjoining the columns of the matrices listed inside the brackets), we deduce from the last two inequalities that
$$
\aligned 
\| \wt f- (v+ Q f) \|_{W^{2,p}(\Om)} + 
\|\rigMinkbis - Q \rigMink\|_{W^{1,p}(\Om)}  
\leq C \,  
\sum_i \| \wt A_i- A_i\|_{L^p(\Om)} 
& \\ \leq C \,   
\sum_{ij}\|(\wt\Ga_{ij}^k, \KHbis_{ij}, \LHbis_i^k, \MHbis_i)-(\Ga_{ij}^k,K_{ij}, L_i^k, M_i)\|_{L^p(\Om)}. &
\endaligned
$$ 

Using the change of variables $x=\ph(p)$ in the last inequality above yields the inequality of the lemma over the set $\Acal=U$, with a mapping $\sigma:\Mink\to\Mink$ defined by $\sigma(y)=v+Qy$ for all $y\in \r^{n+1}$. Finally, to derive the desired inequality over the possibly larger set $\Acal$ it suffices to use the connectedness of $\Acal$ and the compactness of $\ov{\Acal}$.
\end{proof}

An immediate consequence of the previous lemma is the following uniqueness result. 

\begin{lemma}
\label{uniqueness+}
If the immersions $\imm,\immbis:\HH\to\Mink$ and riggings $\rigMink,\rigMinkbis$ are respectively of class $W^{2,p}_{loc}$ and $W^{1,p}_{loc}$, with $p>n$, and preserve the operators $\naH,\KH,\LH,\MH$, then for every connected component of $\HH$ there exists an affine bijection $\sigma:\Mink\to\Mink$ of the Minkowski spacetime such that $\immbis=\sigma\circ\imm$ and $\rigMinkbis=\sigma_*\rigMink$.
\end{lemma}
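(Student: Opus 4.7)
The plan is to derive the uniqueness statement directly from the stability estimate in Lemma~\ref{stability+}. Indeed, taking $(\naHbis,\KHbis,\LHbis,\MHbis) = (\naH,\KH,\LH,\MH)$ in that lemma makes the right-hand side of the stability inequality vanish identically. What remains is just to assemble the local affine bijections produced by Lemma~\ref{stability+} into a single affine bijection on each connected component.

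First, I would fix a connected component $\mathcal{C}$ of $\HH$ and exhaust it by an increasing sequence of connected, relatively compact open subsets $\Acal_1 \Subset \Acal_2 \Subset \cdots \Subset \mathcal{C}$ with $\bigcup_j \Acal_j = \mathcal{C}$. Since the common operators are locally $L^p$, for each $j$ one can choose $\ep_j>0$ small enough so the hypothesis of Lemma~\ref{stability+} is satisfied on $\Acal_j$. Applying that lemma produces an affine bijection $\sigma_j:\Mink\to\Mink$ with
\begin{equation*}
\immbis = \sigma_j\circ\imm \quad\text{and}\quad \rigMinkbis = (\sigma_j)_*\rigMink \quad\text{on } \Acal_j.
\end{equation*}

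The next step is to show that $\sigma_j$ is uniquely determined by these two identities on any nonempty open set, and in particular that $\sigma_j = \sigma_k$ on $\Acal_j$ whenever $j\le k$. Write $\sigma(y) = v + Qy$. At any point $p\in\Acal_j$, the identities above force $Q\,\imm_*(\partial_\alpha)(p) = \immbis_*(\partial_\alpha)(p)$ for $\alpha=1,\ldots,n$ together with $Q\,\rigMink(p) = \rigMinkbis(p)$. Because $\imm$ is an immersion and $\rigMink$ is transversal to $\imm(\HH)$, the family $\{\imm_*(\partial_1)(p),\ldots,\imm_*(\partial_n)(p),\rigMink(p)\}$ is a basis of $\r^{n+1}$, so $Q$ is uniquely determined, and then $v = \immbis(p)-Q\,\imm(p)$ is determined as well. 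This local uniqueness immediately yields compatibility of the $\sigma_j$'s on overlaps, so they glue into a single affine bijection $\sigma$ on all of $\mathcal{C}$ satisfying $\immbis = \sigma\circ\imm$ and $\rigMinkbis = \sigma_*\rigMink$.

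The only mildly subtle point is making sure the stability estimate yields an \emph{identically zero} difference and not merely a small one; this follows because the constant $C(\ep,\Acal)$ in Lemma~\ref{stability+} is finite on each $\Acal_j$ and the $L^p$-difference of the operators is exactly zero. I do not anticipate any real obstacle; the main thing to watch is the bookkeeping between the different $\Acal_j$ when gluing, which is handled by the transversality-based local uniqueness of $\sigma$ described above.
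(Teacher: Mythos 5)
Your proposal is correct and is essentially the paper's own argument: the paper simply states the lemma as ``an immediate consequence'' of Lemma~\ref{stability+}, i.e.\ apply the stability estimate with identical operators so the right-hand side vanishes. Your additional care in gluing the local affine bijections via an exhaustion and the transversality-based uniqueness of $\sigma$ on any open set is a legitimate filling-in of details the paper leaves implicit.
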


We are now in a position to establish a \emph{global} version of Lemma \ref{converse.local+}. 
The proof is similar to that of Lemma~\ref{converse.global} and is omitted.  
This also concludes the proof of Theorem~\ref{main-two}.  

\begin{lemma}
\label{converse.global+}
Suppose that $\HH$ is simply connected and that the operators $\naH, \KH, \LH, \MH$ are of class $L^p_\loc(\HH)$, $p>n$, and satisfy the Gauss and Codazzi equations \eqref{gc}. Then, there exists an immersion $\imm:\HH\to\Mink$ and a rigging $\rigMink:\HH\to T\Mink$, respectively of class $W^{2,p}_\loc$ and $W^{1,p}_\loc$, that preserve the operators $\naH, \KH, \LH, \MH$. 
\end{lemma}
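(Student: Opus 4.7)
The plan is to mimic the argument of Lemma \ref{converse.global}, replacing each application of Lemma \ref{converse.local} by its hypersurface analogue Lemma \ref{converse.local+}, and each application of the uniqueness Lemma \ref{uniqueness} by Lemma \ref{uniqueness+}. Without loss of generality, assume $\HH$ is connected (otherwise treat each connected component separately). First, I would fix a base point $p_0\in\HH$ and a local chart $(\ph_0,U_0)$ with $U_0$ connected and simply connected. By Lemma \ref{converse.local+}, there exist $\imm_0:U_0\to\Mink$ of class $W^{2,p}_\loc$ and a rigging $\rigMink_0:U_0\to T\Mink$ of class $W^{1,p}_\loc$ preserving the operators $\naH,\KH,\LH,\MH$ on $U_0$.

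Given any $p\in\HH$, I would then choose a continuous path $\gamma:[0,1]\to\HH$ from $p_0$ to $p$ and construct a partition $0=t_0<t_1<\cdots<t_K<t_{K+1}=1$ together with a chain of local charts $(\ph_m,U_m)_{m=1}^K$ such that each $U_m$ is connected and simply connected and $\gamma([t_m,t_{m+1}])\subset U_m$. The existence of such a finite cover is proved exactly as in the proof of Lemma \ref{converse.global}, via a supremum argument on the set of admissible partial chains, using the continuity of $\gamma$, compactness of $[0,1]$, and the fact that every point admits a connected, simply connected chart neighborhood. Then Lemma \ref{converse.local+} yields, for each $m=1,\ldots,K$, a local pair $(\imm_m,\rigMink_m)$ on $U_m$ preserving the operators; and Lemma \ref{uniqueness+} allows us to successively replace $(\imm_m,\rigMink_m)$ by $(\sigma_m\circ\imm_m,(\sigma_m)_*\rigMink_m)$ for an appropriate affine bijection $\sigma_m:\Mink\to\Mink$, so that $\imm_m=\imm_{m-1}$ and $\rigMink_m=\rigMink_{m-1}$ on the connected component of $U_m\cap U_{m-1}$ containing $\gamma(t_m)$. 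Define $\imm(p):=\imm_K(p)$ and $\rigMink(p):=\rigMink_K(p)$.

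The main obstacle is showing that this definition is independent of the choice of path $\gamma$ and of the chain of charts, which is precisely where simple-connectedness of $\HH$ is needed. The idea is standard: two continuous paths from $p_0$ to $p$ are homotopic, and along any homotopy one can locally refine the chain of charts so that the resulting pair $(\imm,\rigMink)$ varies continuously; since the local identifications provided by Lemma \ref{uniqueness+} are rigid (they are determined by the values at a single point, because a pair $(\imm_m,\rigMink_m)$ with prescribed values of $\imm_m(q)$ and of the columns $(\d_i\imm_m(q),\rigMink_m(q))$ at one point $q$ is unique on its connected chart by the uniqueness part of Theorem \ref{pfaff} applied to the Pfaff system \eqref{1+}), the germ at each point along the homotopy is locally constant, hence globally constant. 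This is the same scheme already employed in \cite{sor-lpN} and invoked at the end of the proof of Lemma \ref{converse.global}.

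Once $(\imm,\rigMink)$ is consistently defined on all of $\HH$, the regularity $\imm\in W^{2,p}_\loc(\HH)$ and $\rigMink\in W^{1,p}_\loc(\HH)$ is inherited from the local immersions and riggings, since these properties are local and agree on overlaps; the immersion property of $\imm$ and the transversality of $\rigMink$ are likewise local and hold on each $U_m$ by Lemma \ref{converse.local+}. The fact that the resulting global $(\imm,\rigMink)$ preserves $\naH,\KH,\LH,\MH$ is also local and thus follows from the corresponding property of each $(\imm_m,\rigMink_m)$. This completes the proposed argument.
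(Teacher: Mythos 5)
Your proposal is correct and follows exactly the route the paper intends: the paper omits this proof, stating only that it is "similar to that of Lemma~\ref{converse.global}", and your argument is precisely that adaptation, substituting Lemma~\ref{converse.local+} and Lemma~\ref{uniqueness+} for Lemmas~\ref{converse.local} and~\ref{uniqueness} in the chain-of-charts continuation argument. Your added remark on why the glued definition is path-independent (rigidity of local solutions via the initial condition of the Pfaff system, plus simple-connectedness) matches the justification the paper delegates to \cite{sor-lpN}.
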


%====================================================================================================================

\section{Immersion of a spacelike or timelike hypersurface} 
\label{hypersurface2}

In this section, we specialize the results of the previous section to hypersurfaces that are nowhere null.  
If $\HH\subset \MM$ is such a hypersurface, the normal vector field $\nH^\sharp$ is transversal to $\HH$ and therefore the metric $\gH$, induced on $\HH$ by the metric $g$ of the surrounding manifold $\MM$, is non-degenerate (see, e.g., LeFloch \& Mardare \cite[Theorem 6.1]{lfma}). For this reason, we need not prescribe a rigging along $\HH$, 
the projection on $\HH$ being made along $\nH^\sharp$. With the notation of the previous section, 
this is equivalent to choosing $\rig=\nH^\sharp$. Therefore, the results of the previous section apply to a spacelike or timelike hypersurface $\HH$, 
but can be simplified since the operators $\naH,\KH,\LH, \MH$ are now uniquely determined by the fundamental forms of the hypersurface $\HH$. How this simplification can be achieved is the subject of this section. 

Let $(\MM,g)$ be a Lorentzian manifold of dimension $n+1$.
Denote by $[X,Y]$ the Lie bracket of two vector fields, 
and by $\nabla$ the Levi-Civita connection induced by $g$.
Define the operator $\mbox{ }^\sharp:\theta\in T^*\MM\to \theta^\sharp\in T\MM$ by 
$$
\la \theta, Y\ra=g(\theta^\sharp, Y), \quad Y\in T\MM.
$$
Consider an oriented non-null hypersurface in $\HH\subset \MM$ and fix a unit normal form $\nH$ on $\HH$. Since $\HH$ is non-null, the metric field $\gH:T\HH\times T\HH \to\r$, also known as the first fundamental form of $\HH$, has either index zero (Riemannian metric) or index one (Lorentzian metric). The second fundamental form on $\HH$ is defined as in the previous section by 
$$
\KH:T\HH\times T\HH\to \r, \quad \KH(X,Y)=\la \nabla_X \nH, Y\ra, \qquad  X,Y\in T\HH.
$$
Then, the operators $\naH,\KH,\LH,\MH$ associated with rigging 
$\rig=\nH^\sharp$ are defined in terms of $\gH,\KH$ as stated in the following lemma. The proof is omitted.

\begin{lemma}
\label{identities}
(i) \, $\naH$ is the Levi-Civita connection induced by $\gH$, i.e., $\naH:T\HH\times T\HH\to T\HH$ is the unique operator defined by Koszul formula 
$$
\aligned
2 \, \gH(\nabla_XY,Z)=& X(\gH(Y,Z))+Y(\gH(X,Z))-Z(\gH(X,Y))\\
& -\gH(X,[Y,Z])-\gH(Y,[X,Z])+\gH(Z,[X,Y]), \quad X,Y\in T\HH.
\endaligned
$$

(ii) \, $\LH(X)=\KH(X,\cdot)^\sharp$ \quad for all $X\in T\HH$.

(iii) \, $\MH=0$. 
\end{lemma}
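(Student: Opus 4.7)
The three claims all reduce to direct calculation from the decompositions \eqref{decomp1} and the fact that $\nabla$ is the Levi-Civita connection of $g$. Two preliminary observations organize everything. First, for any $X\in T\HH$ the rigging decomposition gives $\bar X=X$ since $\la\nH,X\ra=0$. Second, for any $Z\in T\HH$ one has $g(\rig,Z)=g(\nH^\sharp,Z)=\la\nH,Z\ra=0$, so any component proportional to $\rig$ drops out of a $g$-inner product with a vector tangent to $\HH$. In particular $g(U,Z)=\gH(\bar U,Z)$ whenever $U\in T\MM$ and $Z\in T\HH$.

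For part (i), I plan to invoke uniqueness of the Levi-Civita connection: it suffices to verify that $\naH$ is torsion-free and $\gH$-compatible. Torsion-freeness follows from $\naH_X Y-\naH_Y X=\overline{\nabla_X Y-\nabla_Y X}=\overline{[X,Y]}=[X,Y]$, where the last equality uses $[X,Y]\in T\HH$. Metric compatibility is obtained by pairing the first line of \eqref{decomp1} with $Z\in T\HH$: since $g(\rig,Z)=0$, the normal term disappears and $g(\nabla_X Y,Z)=\gH(\naH_X Y,Z)$, so the identity $X(g(Y,Z))=g(\nabla_X Y,Z)+g(Y,\nabla_X Z)$ restricts to $X(\gH(Y,Z))=\gH(\naH_X Y,Z)+\gH(Y,\naH_X Z)$.

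For part (ii), I will show that $\gH(\LH(X),Z)=\KH(X,Z)$ for every $Z\in T\HH$; non-degeneracy of $\gH$ (which uses the nowhere-null hypothesis) then identifies $\LH(X)$ with $\KH(X,\cdot)^\sharp$, where $\sharp$ is the musical isomorphism induced by $\gH$ on $\HH$. Pairing the second line of \eqref{decomp1} with $Z$ and using $g(\rig,Z)=0$ gives $g(\nabla_X\rig,Z)=\gH(\LH(X),Z)$. On the other hand, the functions $\la\nH,Z\ra$ and $g(\rig,Z)$ both vanish identically on $\HH$, so their $X$-derivatives vanish, and metric compatibility yields $g(\nabla_X\rig,Z)=-g(\rig,\nabla_X Z)=-\la\nH,\nabla_X Z\ra=\la\nabla_X\nH,Z\ra=\KH(X,Z)$.

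For part (iii), since $\nabla g=0$ the musical isomorphism commutes with $\nabla$, so $\nabla_X\rig=\nabla_X(\nH^\sharp)=(\nabla_X\nH)^\sharp$, giving
$$
\MH(X)=\la\nabla_X\nH,\rig\ra=g(\nabla_X\rig,\rig)=\tfrac{1}{2}\,X\bigl(g(\rig,\rig)\bigr).
$$
The normalization $\la\nH,\rig\ra=1$ forces $g(\rig,\rig)=\la\nH,\nH^\sharp\ra=1$ to be constant on $\HH$, so the right-hand side vanishes. The only real obstacle in this proof is bookkeeping: one must be careful that $\sharp$ in (ii) is interpreted as the $\gH$-musical isomorphism on $\HH$ (which is legitimate precisely because $\HH$ is nowhere null), and that the functions being differentiated in tangent directions are genuinely defined and zero on $\HH$. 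Otherwise the argument is the classical Gauss--Weingarten calculation transposed to the Lorentzian non-null setting.
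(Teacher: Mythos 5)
The paper omits the proof of this lemma entirely, and your argument is exactly the standard Gauss--Weingarten computation one would supply: torsion-freeness plus $\gH$-compatibility for (i), pairing the decomposition of $\nabla_X\rig$ against tangent $Z$ and using non-degeneracy of $\gH$ for (ii), and constancy of $g(\rig,\rig)$ for (iii); it is correct, including your careful reading of $\sharp$ in (ii) as the $\gH$-musical isomorphism on $T\HH$ (which agrees with the ambient one on the relevant one-forms). The only quibble is in (iii): with $\rig=\nH^\sharp$ and $\nH$ a unit normal form one has $g(\rig,\rig)=g(\nH,\nH)=\lambda=\pm1$ (the paper's $\lambda$ is $-1$ when $\gH$ is Riemannian), not necessarily $1$, but your argument only uses that this quantity is constant, so the conclusion $\MH=0$ is unaffected.
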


An immediate consequence of this lemma is that the generalized Gauss and Codazzi equations (see \eqref{gc}) 
reduce to the classical equations, as follows: the generalized Gauss equations coincide with the Gauss equations, 
the Codazzi-1 equations coincide with Codazzi equations, Codazzi-2 equations are equivalent to the Codazzi equations, and Codazzi-3 equations are equivalent to the equations $g^{hk}K_{ih}K_{jk}=g^{hk}K_{jh}K_{ik}$ expressing the symmetry of the ``third'' fundamental form of $\HH$. 

If $\HH$ is a hypersurface in $\MM$ and $\imm:(\HH,\gH)\to (\r^{n+1},\gMink)$ is an immersion into the Minkowski spacetime,  then the image $\HH'=\imm(\HH)$ is locally a hypersurface in $\r^{n+1}$. Therefore, there exists a smooth unit normal form $\nH':\HH\to T\r^{n+1}$ to the hypersurface $\HH'$ (uniquely defined up to its sign); this means that $\langle \nH'_p,\imm_* X_p\rangle=0$ for all $p\in\HH$ and $X\in T\HH$. For definiteness, we choose the sign of $\nH'$ to be that for which the immersion $\imm$ preserve the orientation. The second fundamental form of $\HH'$ is then defined as the pull-back on $\HH'$ of the two-covariant tensor field $\naMink \nH'$. 

Our objective in this section is to study whether there exists an immersion $\imm:\HH\to\Mink$ that preserves the fundamental forms $\gH,\KH$ of the hypersurface, that is, an immersion that satisfies 
$$
\imm^* \gMink=\gH  \text{ \ and \ } \imm^*K'=K.
$$

Let $\imm:(\HH,\gH)\to (\r^{n+1},\gMink)$ be an isometric immersion, that is, an immersion that satisfies $\imm^*\gMink=\gH$. If $\HH$ is nowhere null, then the hypersurface $\HH'=\imm(\HH)$ is also nowhere null (since the metric induced by $\gMink$ on $\HH'$ is non-degenerate). Therefore, the unit normal vector field ${\nH'}^\sharp:\HH\to T\r^{n+1}$ is transversal to $\HH'$, hence $\ell':={\nH'}^\sharp$ is a rigging in the sense stated in Section \ref{hypersurface1}. Then the operators $\naHMink,K',L',M'$ associated with the immersion $\imm$ and rigging ${\nH'}^\sharp$ are well defined (see Section \ref{hypersurface1}) and they satisfy the conclusions of Lemma \ref{identities}. As a consequence, \emph{an immersion $\imm:\HH\to\Mink$ preserves the fundamental forms of $\HH$ if and only if $(\imm,{\nH'}^\sharp)$ preserves the operators $\naHMink,K',L',M'$.}

In what follows, we may use local coordinates on the hypersurface: if $\ph:U\subset \HH\to \Om\subset \r^{n}$ denotes a local chart at $p\in\HH$, then $x^i$ denotes a set of Cartesian coordinates in $\Om$ and $\d_i$ denotes the vector field in $T\HH$ tangent to the coordinate line $x^i$. Note that the vector fields $\{\d_1,...,\d_n\}$ form a basis of the tangent space
 $T\HH$, while $\{\nH^\sharp,\d_1,...,\d_n\}$ form a basis of the  space $T\MM$. We denote $\gH_{ij}$, $K_{ij}$, $\Ga_{ij}^k$, $R_{\ hij}^k$ respectively the components in the local coordinates $x_i$ of $\gH$, $\KH$, $\naH$, $\Rm_{\gH}$, where $\Rm_{\gH}$ is the Riemann curvature tensor field associated with the metric $\gH$. Finally, let $(\gH^{hk}):=(\gH_{ij})^{-1}$ and $K^h_i:=\gH^{hk}K_{ki}$.

From Lemmas \ref{identities} and \ref{direct+}, we immediately deduce the following necessary conditions for the existence of an immersion preserving the fundamental forms. 

\begin{lemma}
\label{direct++}
If the immersion $\imm:\HH\to\Mink$ of class $W^{2,p}_\loc$, $p>n$, preserves the fundamental forms of $\HH$, then 
\begin{equation}
\label{gc+}
\aligned
R_{\ hij}^k+ K_{ih}K_j^k-K_{jh}K_i^k&=0 && \text{(Gauss)},\\
\naH_i K_{jh}-\naH_j K_{ih} &=0 && \text{(Codazzi)}.
\endaligned
\end{equation}
\end{lemma}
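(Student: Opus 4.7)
The plan is to reduce the statement to the generalized Gauss--Codazzi equations of Lemma~\ref{direct+} by choosing the rigging on both sides to be the unit normal. Concretely, since $\HH$ is nowhere null, $\rig := \nH^\sharp$ is a legitimate rigging in the sense of Section~\ref{hypersurface1}; then Lemma~\ref{identities} identifies the four operators associated to $(\HH,\rig)$ as $\naH$ (the Levi-Civita connection of $\gH$), $\LH(X)=\KH(X,\cdot)^\sharp$, and $\MH=0$. In local coordinates this gives $L_i^k = \gH^{hk}K_{hi} = K_i^k$ and $M_i=0$.

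Next I would check that an immersion $\imm$ that preserves $\gH$ and $\KH$ automatically preserves the rigging as well. Indeed, from $\imm^*\gMink=\gH$ and the assumption that $\HH'=\imm(\HH)$ is non-null (which is automatic since $\imm$ preserves the signature of $\gH$), the vector $\imm_*(\nH^\sharp)$ is $\gMink$-normal to $\HH'$ and has the same unit Minkowski length as $\nH^\sharp$ in $\MM$; the orientation convention singled out in the paragraph preceding the lemma then forces $\imm_*(\nH^\sharp) = (\nH')^\sharp =: \rigMink$. Consequently the pair $(\imm,\rigMink)$ preserves the operators $\naH,\KH,\LH,\MH$ in the sense of Section~\ref{hypersurface1}, so Lemma~\ref{direct+} applies and the generalized equations \eqref{gc} hold in $\D'(\Om)$.

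It then remains to substitute $L_i^k = K_i^k$ and $M_i = 0$ into \eqref{gc}. The generalized Gauss equation becomes exactly the Gauss equation of \eqref{gc+}; Codazzi-1 becomes the stated Codazzi equation; Codazzi-2 becomes $\naH_i K_j^k-\naH_j K_i^k = 0$, which is simply the index-raised Codazzi equation using $\naH \gH = 0$; and Codazzi-3 becomes $K_{ih}K_j^h - K_{jh}K_i^h = 0$, which is the automatic symmetry of $\gH^{hk}K_{ih}K_{jk}$ in $(i,j)$ (the third fundamental form), noted explicitly after Lemma~\ref{identities}. Hence the four distributional identities \eqref{gc} collapse to the two equations \eqref{gc+}.

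No analytic difficulty is expected, since at the $W^{2,p}$-regularity level each object in Lemma~\ref{identities} has already been shown to make sense as a distribution and the substitution above is algebraic. The only point that deserves a careful check is the identification $\imm_*(\nH^\sharp) = (\nH')^\sharp$ at low regularity: this requires verifying that $\nH'$, which is defined pointwise from $\imm$ by taking the (signed) unit $\gMink$-normal to $\HH'$, inherits $W^{1,p}_\loc$ regularity from $\imm\in W^{2,p}_\loc$ so that the operators on the Minkowski side are defined in the same distributional framework used in Lemma~\ref{direct+}. This is the only step where I would spell out a short regularity argument; the rest is substitution.
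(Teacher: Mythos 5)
Your proposal is correct and follows exactly the paper's route: the paper deduces Lemma~\ref{direct++} "immediately" from Lemmas~\ref{identities} and~\ref{direct+} by taking the rigging to be $\nH^\sharp$, substituting $L_i^k=K_i^k$ and $M_i=0$, and observing (as noted after Lemma~\ref{identities}) that Codazzi-2 and Codazzi-3 then reduce to the Codazzi equation and to the automatic symmetry of the third fundamental form. You simply spell out the details (identification of the rigging, regularity of $\nH'$) that the paper leaves implicit.
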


We next show that these equations are sufficient for the existence of a local immersion $\imm$. 

\begin{lemma}
\label{converse.local++}
Suppose that $\gH\in W^{1,p}_\loc(\HH)$ and $\KH\in L^p_\loc(\HH)$, $p>n$, satisfy the Gauss and Codazzi equations \eqref{gc+}. 
Consider any connected and simply connected open subset $U\subset \HH$ that can de described by a single local chart.
Then, there exists an immersion $\imm:U\to\Mink$ of class $W^{2,p}_\loc(U)$ that preserves the fundamental forms of the hypersurface. 
Moreover, if $\pi$ is a proper isometry of the Minkowski spacetime $\Mink$, then $\pi\circ\imm:\HH\to\Mink$ also preserves the fundamental forms of the hypersurface. 
\end{lemma}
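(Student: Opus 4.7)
The plan is to reduce the lemma to Lemma \ref{converse.local+} by producing, from the pair $(\gH, \KH)$, the auxiliary operators $(\naH, \KH, \LH, \MH)$ attached via Lemma \ref{identities} to the canonical rigging $\rig = \nH^\sharp$: take $\naH$ to be the Levi-Civita connection of $\gH$, set $\MH \equiv 0$, and define $\LH$ as in Lemma \ref{identities}(ii). These operators lie in $L^p_\loc(U)$, and, as already observed in the paper just after Lemma \ref{identities}, the classical Gauss--Codazzi equations \eqref{gc+} (together with the automatic symmetry $\gH^{hk}K_{ih}K_{jk} = \gH^{hk}K_{jh}K_{ik}$) are equivalent to the generalized equations \eqref{gc}. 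Applying Lemma \ref{converse.local+} then delivers an immersion $\imm \in W^{2,p}_\loc(U)$ together with a transverse field $\rigMink \in W^{1,p}_\loc$ preserving these operators; however, preservation of $(\naH, \KH, \LH, \MH)$ is a priori weaker than preservation of the first and second fundamental forms themselves, and the main work lies in closing that gap.

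To close it, I would exploit the freedom to choose the initial value $F_\star$ in the underlying Pfaff system \eqref{1+}, whose $W^{1,p}_\loc$ solution $F = [\partial_1 f, \ldots, \partial_n f, \rigMink]$ encodes the immersion and rigging. Fix $x_\star \in \Om$ and, using Lemma \ref{matrixdecomp}, choose $F_\star$ satisfying
\[
F_\star^T \idMink F_\star \,=\, G^*(x_\star), \qquad G^*(x) := \begin{pmatrix} (\gH_{ij}(x)) & 0 \\ 0 & \epsilon \end{pmatrix},
\]
where $\epsilon = \pm 1$ is the causal sign $g(\nH^\sharp, \nH^\sharp)$ (locally constant since $\HH$ is nowhere null, and determined by the signature of $\gH$). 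The right-hand side lies in $\LL(n+1)$, so such $F_\star$ exists. Differentiating $G(x) := F(x)^T \idMink F(x)$ via \eqref{1+} yields the linear first-order equation $\partial_i G = A_i^T G + G A_i$, and a direct block calculation shows that $G^*$ satisfies the same equation: the $n \times n$ block uses the compatibility $\partial_i \gH_{jk} = \Gamma^m_{ij}\gH_{mk} + \gH_{jm}\Gamma^m_{ik}$ of $\naH$ with $\gH$; the off-diagonal blocks use the defining relation between $\LH$ and $\KH$; the scalar block vanishes because $\MH = 0$. Since both $G$ and $G^*$ are $W^{1,p}_\loc$ solutions agreeing at $x_\star$, Theorem \ref{pfaff} forces $G \equiv G^*$ throughout $\Om$.

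Reading off the block entries of $F^T \idMink F \equiv G^*$ gives simultaneously $(df)^T \idMink (df) = (\gH_{ij})$, hence $\imm^* \gMink = \gH$, together with $\gMink(\rigMink, \partial_k f) = 0$ and $\gMink(\rigMink, \rigMink) = \epsilon$, so $\rigMink$ is a unit normal to $\HH' := \imm(U)$; orient $\nH'$ so that $\la \nH', \rigMink\ra = 1$. Pairing the Pfaff relation $\partial_i \partial_j f = \Gamma^k_{ij} \partial_k f - K_{ij} \rigMink$ with $\nH'$ then yields $K'_{ij} = -\la \nH', \partial_i \partial_j f\ra = K_{ij}$, establishing preservation of the second fundamental form. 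The ``moreover'' clause is immediate: a proper isometry $\pi$ of $\Mink$ preserves $\gMink$ and orientation, and hence the first and second fundamental forms induced on the image. The main obstacle is the matrix-ODE propagation in the middle paragraph: converting the pointwise algebraic choice of $F_\star$ into the global Minkowski-orthonormality relation $F^T \idMink F \equiv G^*$ requires the full $W^{1,p}$ uniqueness provided by Theorem \ref{pfaff}, and this step is precisely where the sharpness of the regularity hypothesis on $(\gH, \KH)$ enters.
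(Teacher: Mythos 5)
Your proposal is correct and follows essentially the same route as the paper: the paper likewise sets up the Pfaff system with $C_i=\mat{\Ga_{ih}^k}{K_i^k}{-K_{ih}}{0}$, chooses $F_\star$ via Lemma \ref{matrixdecomp} so that $F_\star^T\idMink F_\star=\diag\big((\gH_{ij}(x_\star)),\lambda\big)$, propagates $F^T\idMink F\equiv\diag\big((\gH_{ij}),\lambda\big)$ by the uniqueness part of Theorem \ref{pfaff}, and then reads off the first fundamental form, the unit normality of $\rigMink$, and the second fundamental form from the Gauss--Weingarten relations. The only cosmetic difference is that the paper fixes the sign of the normal by arranging $\det F_\star>0$ and propagating $\det F>0$ by continuity, where you instead orient $\nH'$ to match $\rigMink$ directly.
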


\begin{proof}
The metric $\gH$ may be either Riemannian or Lorentzian. The proof is the same in both cases, 
except for the value of the parameter $\lambda=g(\nH,\nH)$ appearing below, which is equal to $-1$ if $\gH$ is Riemannian and to $1$ if $\gH$ is Lorentzian. 

Let $\ph:U\subset \HH \to \Om\subset \r^n$ be any local chart, so that $\Om:=\ph(U)$ is connected and simply 
connected. Fix a point $x_\star\in \Om$ and an invertible matrix $F_\star\in \r^{(n+1)\times (n+1)}$ that satisfies the relation (this choice will be explained later)
$$
F_\star^T \idMink F_\star =\mat{(\gH_{ij}(x_\star))}{0}{\quad 0}{\lambda},
$$ 
where $\idMink$ was defined in \eqref{defA}. 
The existence of such a matrix $F_\star$ is proved by Lemma \ref{matrixdecomp}. Since the matrix $(-F_\star)$ also satisfies the equation above, we may assume that $\det F_\star>0$. 

As in the proof of Lemma \ref{converse.local+}, there exists a unique matrix field $F$ belonging to
$W^{1,p}_\loc(\Om;\r^{(n+1)\times (n+1)})$ that satisfies the Pfaff system 
\begin{equation}\label{1++}
\aligned
& \frac{\d F}{\d x^i} = F \,C_i \quad \text{ a.e. in } \Om, 
\qquad C_i:=\mat{\Ga_{ih}^k}{K_i^k}{-K_{ih}}{0},\\
& F(x_\star) =F_\star. 
\endaligned 
\end{equation}
Let $F_i$ denote the $i$-th column vector field of the matrix field $F$. Again as in the proof of Lemma \ref{converse.local+}, 
there exists a vector field $f\in W^{2,p}_\loc(\Om,\r^{n+1})$, unique up to the addition of a constant vector field, that satisfies the Poincar\'e system 
\begin{equation}
\label{1bis++}
\frac{\d f}{\d x^i} = F_i  \qquad \text{ in } \Om. 
\end{equation}
Then one can see that $f$ is an immersion and satisfies (see \eqref{eq2+}):
\begin{equation}
\label{eq2++}
\aligned 
\frac{\d }{\d x^i}\Big( \frac{\d f}{\d x^h} \Big)& = \Ga_{ih}^k \frac{\d f}{\d x^k}  - K_{ih}\rigMink , \\
\frac{\d \rigMink}{\d x^i} & = K_i^k \frac{\d f}{\d x^k} ,
\endaligned
\end{equation}
where $\rigMink$ denotes the $(n+1)$-column vector field of the matrix field $F$. 

We now prove that $F$ satisfies 
$$
F^T(x) \idMink F(x) =\mat{(\gH_{ij}(x))}{0}{\quad 0}{\lambda}, \quad x\in \Om.
$$ 
By construction, this relation is satisfied at $x_\star$. Furthermore, on one hand, equation \eqref{1++} 
implies that 
$$
\frac{\d}{\d x^i}\left[F^T \idMink F\right] = C_i^T \left[F^T \idMink F\right] + \left[F^T \idMink F\right] C_i, 
$$
and, on the other hand, the definition of the Christoffel symbols $\Ga_{ij}^k$ shows that
$$
\frac{\d}{\d x^i}\mat{(\gH_{ij})}{0}{\quad 0}{\lambda} = C_i^T \mat{(\gH_{ij})}{0}{\quad 0}{\lambda} + \mat{(\gH_{ij})}{0}{\quad 0}{\lambda} C_i. 
$$
Therefore the uniqueness part of Theorem~\ref{pfaff} shows that 
$$
F^T(x) \idMink F(x) =\mat{(\gH_{ij}(x))}{0}{\quad 0}{\lambda}, \quad x\in \Om,
$$ 
since both satisfies the same Cauchy problem. 

Let us now prove that $\imm:=f\circ\ph:U\to\Mink$ satisfies the conclusions of the lemma. First, the above equation shows that $f$ is an isometric immersion; in other words, $\imm$ preserves the first fundamental form of the hypersurface $\HH$. Second, it shows that the $(n+1)$-column vector of $F$, denoted 
$\rigMink$, is orthogonal to the tangent space of the hypersurface $f(\Om)$ in the Minkowski spacetime and that $\gMink(\rigMink,\rigMink)=\lambda$. This implies that either $\rigMink={\nH'}^\sharp$, or $\rigMink=-{\nH'}^\sharp$. In fact, since  $\det F(x_\star)>0$ and $F$ is continuous (thanks to the Sobolev embedding $W^{1,p}_\loc(\Om)\subset \C^0(\Om)$ for $p>n$) on the connected set $\Om$, we have $\det F>0$ at every point of $\Om$; therefore $\rigMink={\nH'}^\sharp$. Combined with the equations \eqref{eq2++}, which are nothing but the classical Gauss and Weingarten equations on the hypersurface $f(\Om)$, this implies that $K_{ij}$ are the covariant components of the second fundamental form of $f(\Om)$. In terms of the immersion $\imm$, this means that $\imm$ preserves the second fundamental form of the hypersurface $\HH$. 
\end{proof}

Before extending the local immersion of Lemma \ref{converse.local++} to a 
global one, we need prove the uniqueness of such an immersion. In fact, we will establish a stronger result, namely that the immersion $\imm:\HH\to\Mink$ depends continuously on its fundamental forms.

\begin{lemma}
\label{stability++}
Let $(\gH,\KH)$ and $(\gHbis,\KHbis)$ denote the fundamental forms induced on the hypersurface $\HH\subset \MM$ by two immersions $\imm,\immbis:\HH\to \Mink$, respectively. For any connected smooth open set $\Acal\Subset \MM$ and any $\ep>0$, there exists a constant $C=C(\ep,\Acal)$ with the following property: if the fundamental forms $(\gH,\KH)$ and $(\gHbis,\KHbis)$ satisfy 
$$
\aligned 
& \min\big( |\det \gH|, |\det \gHbis|\big) 
\geq \ep, 
\\
& \max\big(\|\gH\|_{W^{1,p}(\Acal)},\|\KH\|_{L^p(\Acal)},\|\gHbis\|_{W^{1,p}(\Acal)},\|\KHbis\|_{L^p(\Acal)}\big) \leq \frac1\ep, 
\endaligned 
$$
then there exists proper isometries $\pi$ and $\wt\pi$ of the Minkowski space such that
$$
\|\wt\pi\circ\immbis-\pi\circ \imm\|_{W^{2,p}(\Acal)} 
\leq C \, \Big( \|\gHbis-\gH\|_{W^{1,p}(\Acal)} + \|\KHbis-\KH\|_{L^p(\Acal)} \Big). 
$$
\end{lemma}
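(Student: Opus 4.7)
The plan is to adapt the proof of Lemma~\ref{stability+} to the special choice of rigging $\rig=\nH^\sharp$ singled out by Lemma~\ref{identities}, exploiting that $\MH=0$ and that both $\naH$ and $\LH$ are determined by the fundamental forms. This extra rigidity will allow us to replace the affine bijection $\sigma$ of Lemma~\ref{stability+} by \emph{proper} isometries $\pi,\wt\pi$ of $\Mink$.

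First I would fix a local chart $\ph:U\subset\HH\to\Om\subset\r^n$ with $U$ connected and smoothly bounded, and set $f:=\imm\circ\ph^{-1}$, $\wt f:=\immbis\circ\ph^{-1}$. Let $\rigMink,\rigMinkbis$ be the unit normals along the two immersions (sign fixed by orientation), and assemble the matrix field $F:=[\d_1 f\ \cdots\ \d_n f\ \rigMink]$ and its counterpart $\wt F$. By the computation in the proof of Lemma~\ref{converse.local++}, $F$ and $\wt F$ satisfy Pfaff systems $\d_i F=F C_i$ and $\d_i\wt F=\wt F\wt C_i$ with coefficients $C_i=\mat{\Ga_{ih}^k}{K_i^k}{-K_{ih}}{0}$ built from the Christoffel symbols of $\gH$ and from $K_i^k=\gH^{kl}K_{li}$. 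Since $p>n$, the space $W^{1,p}(\Om)$ is a Banach algebra, and the bounds $|\det\gH|\geq\ep$ and $\|\gH\|_{W^{1,p}}\leq\ep^{-1}$ give Lipschitz control of the inverse $(\gH^{kl})$ in $W^{1,p}$, hence of $\Ga_{ij}^k$ and $K_i^k$ in $L^p$. Combined with the analogous bounds on $\gHbis,\KHbis$, this yields
$$
\sum_i\|\wt C_i-C_i\|_{L^p(\Om)}\leq C(\ep,\Om)\bigl(\|\gHbis-\gH\|_{W^{1,p}(\Om)}+\|\KHbis-\KH\|_{L^p(\Om)}\bigr).
$$

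Next, pick $x_\star\in\Om$ and apply Lemma~\ref{matrixdecomp} to the block matrices $\mat{(\gH_{ij}(x_\star))}{0}{0}{\lambda}$ and $\mat{(\gHbis_{ij}(x_\star))}{0}{0}{\lambda}$, where $\lambda=\pm 1$ according to the signature of $\gH$. Connectedness of $\Acal$ together with $|\det\gH|\geq\ep$ forces this signature to be constant, and we may assume $\gH$ and $\gHbis$ share the same signature (otherwise $\|\gHbis-\gH\|_{L^\infty}$ is bounded below in terms of $\ep$ and the inequality is vacuous for $C(\ep,\Acal)$ large enough). This produces matrices $F_\star,\wt F_\star$ with $F_\star^T\idMink F_\star=\mat{(\gH_{ij}(x_\star))}{0}{0}{\lambda}$ (and similarly for $\wt F_\star$) and $|\wt F_\star-F_\star|\leq C(\ep)\|\gHbis-\gH\|_{W^{1,p}(\Om)}$ by Sobolev embedding. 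After adjusting signs so that $\det F_\star,\det\wt F_\star>0$, I set $Q:=F_\star F(x_\star)^{-1}$ and $\wt Q:=\wt F_\star\wt F(x_\star)^{-1}$. The identity $F(x_\star)^T\idMink F(x_\star)=\mat{(\gH_{ij}(x_\star))}{0}{0}{\lambda}$ established in the proof of Lemma~\ref{converse.local++} (and which holds for any $\imm$ preserving the fundamental forms) gives $Q^T\idMink Q=\idMink$, and the orientation-consistent sign of $\det F_\star$ places $Q$ in $\OO^\eta_+(n+1)$; similarly for $\wt Q$.

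Finally, $QF$ and $\wt Q\wt F$ satisfy the same Pfaff systems as $F$ and $\wt F$ with initial values $F_\star,\wt F_\star$, so the $L^p$ stability estimate recalled in Section~\ref{prel} yields
$$
\|\wt Q\wt F-QF\|_{W^{1,p}(\Om)}\leq C(\ep,\Om)\Bigl(|\wt F_\star-F_\star|+\sum_i\|\wt C_i-C_i\|_{L^p(\Om)}\Bigr).
$$
Since the first $n$ columns of $F,\wt F$ are $\d_i f,\d_i\wt f$, Poincar\'e--Wirtinger produces vectors $v,\wt v\in\r^{n+1}$ such that, with $\pi(y):=v+Qy$ and $\wt\pi(y):=\wt v+\wt Qy$,
$$
\|\wt\pi\circ\wt f-\pi\circ f\|_{W^{2,p}(\Om)}\leq C(\ep,\Om)\bigl(\|\gHbis-\gH\|_{W^{1,p}(\Om)}+\|\KHbis-\KH\|_{L^p(\Om)}\bigr).
$$
Pulling back via $\ph$ and covering $\Acal$ by finitely many such charts (using connectedness of $\Acal$ and compactness of $\overline{\Acal}$, as at the end of the proofs of Lemmas~\ref{stability} and~\ref{stability+}) gives the global estimate. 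The main obstacle I anticipate is the orientation/signature bookkeeping needed to land $Q,\wt Q$ in $\OO^\eta_+(n+1)$ rather than just $\OO^\eta(n+1)$; this is handled by the orientation-consistent choice of the unit normals $\rigMink,\rigMinkbis$ (ensuring $\det F,\det\wt F>0$ pointwise) together with the sign freedom in Lemma~\ref{matrixdecomp}.
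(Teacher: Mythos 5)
Your proposal is correct and follows essentially the same route as the paper's proof: the same Pfaff systems for $F=[\d_1 f\ \cdots\ \d_n f\ \rigMink]$, the same use of Lemma~\ref{matrixdecomp} at $x_\star$ (with the sign adjustment to get positive determinants), the same stability estimate for Pfaff systems, Poincar\'e--Wirtinger, and the chart-covering argument. The extra remark on handling a possible signature mismatch between $\gH$ and $\gHbis$ is a reasonable point the paper leaves implicit, but it does not change the argument.
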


\begin{proof}
We follow the proof of Lemma \ref{stability+}, save for the choice of the proper isometries $\pi$ and $\wt\pi$. 
Let $U\Subset \MM$ be a connected smooth open set for which there exists a local chart $\ph:U\subset \HH\to \Om\subset\r^{n}$. Let $f:=\imm\circ\ph^{-1}$ and $\wt f:=\immbis\circ\ph^{-1}$. Define the matrix fields $F,\wt F:\Om\to \r^{(n+1)\times(n+1)}$ whose columns are respectively the vector fields $\frac{\d f}{\d x^1},...,\frac{\d f}{\d x^n},{\nH'}^\sharp$ and $\frac{\d \wt f}{\d x^1},...,\frac{\d \wt f}{\d x^n},{\wt\nH'}^\sharp$. As in the proof of Lemma~\ref{converse.local++}, we then have  
$$
\frac{\d F}{\d x^i} = F \,C_i \quad \text{ a.e. in } \Om, \qquad 
C_i:=\mat{\Ga_{ih}^k}{K_i^k}{-K_{ih}}{0} \, ,
$$
and 
$$
\frac{\d \wt F}{\d x^i} = \wt F \,\wt C_i \quad 
\text{ a.e. in } \Om, \qquad 
\wt C_i:=\mat{\wt\Ga_{ih}^k}{\KHbis_i^k}{-\KHbis_{ih}}{0} \, .
$$

Let $x_\star\in \Om$. Since $\imm$ and $\immbis$ are isometric immersions (i.e., they preserve the first fundamental form) and ${\nH'}^\sharp, {\wt\nH'}^\sharp$ are unit normal fields, we have on one hand 
$$
F(x_\star)^T \idMink F(x_\star) =\mat{(\gH_{ij}(x_\star))}{0}{\quad 0}{\lambda}, 
\qquad 
\wt F(x_\star)^T \idMink \wt F(x_\star) =\mat{\big(\gHbis_{ij}(x_\star)\big)}{0}{\quad 0}{\lambda}.
$$ 
On the other hand, Lemma \ref{matrixdecomp} shows that there exists matrices $E_\star,\wt E_\star$ such that 
$$
E_\star^T \idMink E_\star = \mat{(\gH_{ij}(x_\star))}{0}{\quad 0}{\lambda}, 
\qquad
\wt E_\star^T \idMink \wt E_\star=\mat{\big(\gHbis_{ij}(x_\star)\big)}{0}{\quad 0}{\lambda},
$$
and 
\begin{equation}
 \label{newest}
|\wt E_\star - E_\star|\leq C \big| (\gHbis_{\si\ta}(x_\star)) - (\gH_{\si\ta}(x_\star))\big| \leq \|(\gHbis_{\si\ta}) - (\gH_{\si\ta})\|_{W^{1,p}(\Om)}. 
\end{equation}
As explained in the proof of Lemma \ref{converse.local++}, we may assume that $\det E_\star>0$ and $\det \wt E_\star>0$. Let $Q:=E_\star F(x_\star)^{-1}$ and $\wt Q:= \wt E_\star (\wt F(x_\star))^{-1}$ and note that they are Minkowski-orthogonal matrices with positive determinant. 

The definition of the matrices $Q$ and $\wt Q$ implies that the matrix fields $(QF)$ and $(\wt Q\wt F)$ satisfy
$$
\aligned
&
\frac{\d (QF)}{\d x^i} = (QF) \,C_i \text{ \ a.e. in } \Om, \qquad (QF)(x_\star)=E_\star,\\
& 
\frac{\d (\wt Q\wt F)}{\d x^i} = (\wt Q\wt F) \,\wt C_i   \text{ \ a.e. in } \Om, \qquad (\wt Q \wt F)(x_\star)=\wt E_\star.
\endaligned
$$
Then, in  view of the stability property for Pfaff systems stated in Section~\ref{prel}, there exists a constant $C=C(\ep,\Om)$ such that 
$$
\|\wt Q\wt F -  QF\|_{W^{1,p}(\Om)} \leq C \, \Big( |\wt E_\star- E_\star | + \sum_i \| \wt C_i- C_i\|_{L^p(\Om)} \Big). 
$$
Using inequality \eqref{newest} and the definition of matrices $C_i, \wt C_i$, we next obtain 
$$
\|\wt Q\wt F -  QF\|_{W^{1,p}(\Om)}
 \leq C \, \Big( \|(\gHbis_{\si\ta}) - (\gH_{\si\ta})\|_{W^{1,p}(\Om)} +\| (\wt K_{ij})- (K_{ij})\|_{L^p(\Om)} \Big).
$$
Noting that $\wt Q\wt F= [\wt Q(d \wt f)\  \wt Q{\wt\nH'}^\sharp]$ and $QF=[Q(df) \ Q{\nH'}^\sharp]$ (the notation $[...]$ designates the matrix obtained by adjoining the columns of the matrices listed inside the brackets), we deduce from the above inequality that
$$
\|d( \wt Q \wt f -  Qf)\|_{W^{1,p}(\Om)}
 \leq C \, \Big( \|(\gHbis_{\si\ta}) - (\gH_{\si\ta})\|_{W^{1,p}(\Om)} +\| (\wt K_{ij})- (K_{ij})\|_{L^p(\Om)} \Big).
$$ 
This inequality in turn implies (thanks to Poincar\'e-Wirtinger's inequality) that 
$$
\| (\wt v+\wt Q\wt f)- (v+Qf)\|_{W^{2,p}(\Om)} \leq C \,  \Big( \|(\gHbis_{\si\ta}) - (\gH_{\si\ta})\|_{W^{1,p}(\Om)} +\| (\wt K_{ij})- (K_{ij})\|_{L^p(\Om)} \Big), 
$$
where $v=-Qf(x_\star)$ and $\wt v=- \wt Q \wt f(x_\star)$. Since the matrices $Q$ and $\wt Q$ are proper Minkowski-orthogonal, the mappings $\pi: y\in\r^{n+1}\mapsto v+ Qy\in\r^{n+1}$ and  $\wt \pi: y\in\r^{n+1}\mapsto \wt v+ \wt Qy\in\r^{n+1}$ are proper isometries of the Minkowski spacetime $\Mink$. Finally, letting $x=\ph(p)$, $p\in U$, in the above inequality shows that
$$
\|\wt\pi\circ \immbis-\pi\circ\psi\|_{W^{2,p}(U)} \leq C \, \big( \|\gHbis-\gH\|_{W^{1,p}(U)} + \|\KHbis-\gH\|_{L^p(U)}\big).
$$
This inequality still holds when $U$ is replaced with the possibly larger set $\Acal$ since $\Acal$ is connected and $\overline{\Acal}$ is compact.
\end{proof}

An immediate consequence of the previous lemma is the following uniqueness result. 

\begin{lemma}
\label{uniqueness++}
If the immersions $\imm,\immbis:\HH\to\Mink$ of class $W^{2,p}_{loc}$, $p>n$, have the same fundamental forms, then for every connected component of $\MM$ there exists a proper isometry $\ta$ of the Minkowski space such that $\immbis=\ta\circ\imm$. 
\end{lemma}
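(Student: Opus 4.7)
\emph{Proof proposal.} The argument is a direct consequence of the stability estimate of Lemma~\ref{stability++}, applied with the two sets of fundamental forms coinciding. Since $\gHbis=\gH$ and $\KHbis=\KH$ by hypothesis, the right-hand side of the stability inequality vanishes on any connected smooth open set $\Acal\Subset\HH$ for which the bounds of Lemma~\ref{stability++} are satisfied. Such sets cover each connected component of $\HH$: the $W^{1,p}$- and $L^p$-norms of the fundamental forms are finite on any relatively compact set since $\imm,\immbis\in W^{2,p}_\loc$, while $|\det\gH|$ and $|\det\gHbis|$ are bounded below on $\ov\Acal$ by the Sobolev embedding $W^{1,p}\subset C^0$ (valid for $p>n$) together with the standing assumption of this section that $\HH$ is nowhere null. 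Thus one obtains proper isometries $\pi_\Acal,\wt\pi_\Acal$ with $\wt\pi_\Acal\circ\immbis=\pi_\Acal\circ\imm$ on $\Acal$; setting $\ta_\Acal:=\wt\pi_\Acal^{-1}\circ\pi_\Acal$ yields a proper isometry of $\Mink$ such that $\immbis=\ta_\Acal\circ\imm$ on $\Acal$.

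To promote this family into a single global isometry on each connected component, I first establish a local uniqueness observation: if $\ta_1,\ta_2$ are two proper isometries of $\Mink$ with $\ta_1\circ\imm=\ta_2\circ\imm$ on a nonempty open set $U\subset\HH$, then $\ta_1=\ta_2$. Indeed, since $d\imm$ has maximal rank $n$ at every point of $U$, the inverse function theorem shows that $\imm(U)$ contains $n+1$ affinely independent points of $\r^{n+1}$; the isometry $\ta_2^{-1}\circ\ta_1$ then fixes pointwise the affine hyperplane they span. Its linear part must act as $\pm\mathrm{Id}$ on the one-dimensional orthogonal complement of this hyperplane. Since reflection across a hyperplane has determinant $-1$, hence is improper, and $\ta_2^{-1}\circ\ta_1$ is proper, the action must be the identity, forcing $\ta_1=\ta_2$.

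With the local uniqueness available, I exhaust each connected component of $\HH$ by an increasing sequence $\Acal_1\subset\Acal_2\subset\cdots$ of connected relatively compact smooth open sets whose union is the entire component (possible since $\HH$ is second-countable, hence $\sigma$-compact). The first step produces a proper isometry $\ta_k$ on each $\Acal_k$, and the local uniqueness applied to the overlap $\Acal_k\subset\Acal_{k+1}$ yields $\ta_{k+1}=\ta_k$. All the $\ta_k$ therefore coincide with a single proper isometry $\ta$ of $\Mink$, and $\immbis=\ta\circ\imm$ on the whole connected component, which is the desired conclusion.

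The only delicate point in the plan is the local uniqueness step in the degenerate situation where $\imm(U)$ happens to lie in a single affine hyperplane of $\r^{n+1}$ (that is, when the hypersurface is locally flat in the ambient sense); it is precisely here that the properness (orientation-preservation) of the isometries must be invoked to exclude reflection across that hyperplane as a nontrivial ambiguity. Everything else is a routine consequence of Lemma~\ref{stability++} together with standard exhaustion.
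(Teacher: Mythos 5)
Your proposal is correct and follows the same route as the paper, which presents this lemma as an immediate consequence of Lemma~\ref{stability++} (apply the stability estimate with $\gHbis=\gH$, $\KHbis=\KH$ to get the conclusion locally, then propagate over the connected component); your gluing argument via local uniqueness and exhaustion is a sound elaboration of the details the paper omits. One small precision: in the local uniqueness step the relevant complement is the $\eta$-orthogonal complement of the spanned hyperplane, which is a direct summand only when that hyperplane is non-null --- this holds in the case $\imm(U)$ lies in a hyperplane (necessarily the tangent one, non-null since $\imm$ preserves the nondegenerate form $\gH$), and otherwise one simply has $n+2$ affinely independent points and concludes directly.
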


We are now in a position to establish a \emph{global} version of Lemma \ref{converse.local+}. 
The proof is similar to that of Lemma \ref{converse.global} and is omitted.  

\begin{lemma}
\label{converse.global++}
Suppose that $\HH$ is simply 
connected and that $(\gH, \KH)$ are of class $W^{1,p}_\loc(\HH)\times L^p_\loc(\HH)$, $p>n$, and satisfy the Gauss and Codazzi equations \eqref{gc+}. Then, there exists an immersion $\imm:\HH\to \Mink$ of class $W^{2,p}_\loc(\HH)$ that preserves the fundamental forms of $\HH$. 
\end{lemma}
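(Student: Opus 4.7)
My plan is to mirror the proof of Lemma~\ref{converse.global}, assembling a global immersion by patching local ones obtained from Lemma~\ref{converse.local++}, using the uniqueness up to proper Minkowski isometries provided by Lemma~\ref{uniqueness++}, and invoking the simple-connectedness of $\HH$ to ensure well-definedness. Without loss of generality I first reduce to the case where $\HH$ is connected (otherwise apply the argument on each connected component). Fix a base point $p_0\in\HH$ and a local chart $(\ph_0,U_0)$ about it with $U_0$ connected and simply connected; Lemma~\ref{converse.local++} yields an immersion $\imm_0:U_0\to\Mink$ of class $W^{2,p}_\loc(U_0)$ that preserves $(\gH,\KH)$.

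Given any $p\in\HH$, choose a continuous path $\ga:[0,1]\to\HH$ from $p_0$ to $p$. Exactly as in Lemma~\ref{converse.global}, a supremum/compactness argument allows me to build a partition $0=t_0<t_1<\cdots<t_{K+1}=1$ and charts $(\ph_m,U_m)$ with $U_m$ connected and simply connected and $\ga([t_m,t_{m+1}])\subset U_m$. Lemma~\ref{converse.local++} provides local immersions $\imm_m:U_m\to\Mink$ of class $W^{2,p}_\loc$ preserving $(\gH,\KH)$. By the uniqueness result of Lemma~\ref{uniqueness++}, on each overlap $U_{m-1}\cap U_m$ there exists a proper isometry $\ta_m$ of $\Mink$ such that $\ta_m\circ\imm_m$ coincides with $\imm_{m-1}$ on the connected component of $U_{m-1}\cap U_m$ containing $\ga(t_m)$. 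Composing recursively, I obtain an extension of $\imm_0$ all the way along $\ga$, and set $\imm(p):=\imm_K(p)$.

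The remaining and only nontrivial point is to check that $\imm(p)$ is independent of the path $\ga$ and of the chosen chain of charts; this is precisely where simple-connectedness of $\HH$ enters. Given two paths $\ga_0,\ga_1$ from $p_0$ to $p$, a homotopy between them can be covered by a finite grid of connected, simply connected chart neighborhoods, and Lemma~\ref{uniqueness++} applied cell by cell along the grid shows that the analytic continuations along $\ga_0$ and $\ga_1$ produce the same value at $p$. This monodromy argument, identical in spirit to the one in \cite{sor-lpN} and used in Lemma~\ref{converse.global}, is the main technical step.

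Once well-definedness is established, the global map $\imm:\HH\to\Mink$ inherits the regularity $W^{2,p}_\loc(\HH)$ and the preservation of the fundamental forms from the local representatives $\imm_m$, since both properties are local and already satisfied on each $U_m$ (after composition with the corresponding isometries, which preserve $\gMink$ and its Levi-Civita connection and hence preserve the first and second fundamental forms). This completes the proof.
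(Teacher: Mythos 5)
Your proposal is correct and follows essentially the same route as the paper, which explicitly omits the proof of this lemma on the grounds that it is identical in structure to that of Lemma~\ref{converse.global}: patch local immersions from Lemma~\ref{converse.local++} along paths, match them on overlaps via the uniqueness statement of Lemma~\ref{uniqueness++}, and use simple-connectedness of $\HH$ for path-independence. No discrepancies to report.
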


%=================================================================================================================================

\end{document}